\newtheorem{theorem}[subsection]{Theorem}
\newtheorem{thm}[subsection]{Theorem}
\newtheorem{lemma}[subsection]{Lemma}
\newtheorem{fact}[subsection]{Fact}
\newtheorem{proposition}[subsection]{Proposition}
\newtheorem{prop}[subsection]{Proposition}
\newtheorem{corollary}[subsection]{Corollary}
\newtheorem*{thm*}{Theorem}
\theoremstyle{remark}
\newtheorem{subremark}[subsubsection]{Remark}
\newcommand{\wv}{{\widetilde{v}}}
\newcommand{\isoarrow}{{~\overset\sim\longrightarrow~}}
\newcommand{\ZZ}{{\mathbb Z}}
\newcommand{\CG}{{\mathcal{G}}}
\newcommand{\CA}{{\mathcal{A}}}
\newcommand{\CL}{{\mathcal{L}}}
\newcommand{\G}{{\Gamma}}
\newcommand{\cO}{{\mathcal{O}}}
\newcommand{\fg}{{\frak{g}}}
\newcommand{\ra}{{~\rightarrow~}}
\newcommand{\QQ}{{\mathbb Q}}
\newcommand{\Qlb}{{\overline{\mathbb Q}_{\ell}}}
\newcommand{\RR}{{\mathbb R}}
\newcommand{\ad}{{\mathbf A}}
\newcommand{\CC}{{\mathbb C}}
 \newcommand{\PGSp}{\mathrm{PGSp}}
 \DeclareMathOperator{\Tr}{Tr}
 \def\Oct{\mathbb{O}}
 \DeclareMathOperator{\ind}{ind}
 \newcommand{\GL}{\mathrm{GL}}
 \newcommand{\N}{\mathbb N}
 \DeclareMathOperator{\ord}{ord}
\begin{document}
%\author{Michael Harris}
%\thanks{M.H.'s research received funding from the European Research Council under the European Community's Seventh Framework Programme (FP7/2007-2013) / ERC Grant agreement no. 290766 (AAMOT).  M.H. was partially supported by NSF Grant DMS-1701651.
\author{Michael Harris, Chandrashekhar B. Khare, and Jack A. Thorne}

%\address{Michael Harris\\
%Department of Mathematics, Columbia University, New York, NY  10027, USA}
% \email{harris@math.columbia.edu}

\date{\today}

\title[Local parameterization of $G_2$]{A local 
Langlands parameterization for generic supercuspidal representations of 
$p$-adic $G_2$ \\[0.3em]\smaller{}With appendix by Gordan Savin  \medskip
\\ Une param\'etrisation de Langlands locale pour les repr\'esentations supercuspidales g\'en\'eriques du groupe $p$-adique $G_2$
\\[0.3em]\smaller{}Avec une appendice par Gordan Savin
}

\begin{abstract}
    We construct a Langlands parameterization of supercuspidal representations of $G_2$ over a $p$-adic field. More precisely, for any finite extension $K / \QQ_p$ we will construct a bijection
   $$ \CL_g : \CA^0_g(G_2,K) \longrightarrow   \CG^0(G_2,K)$$
    from the set of {\it generic} supercuspidal representations of $G_2(K)$ to the set of irreducible continuous homomorphisms $\rho : W_K \to G_2(\CC)$ with $W_K$ the Weil group of $K$. The construction of the map is simply a matter of assembling arguments that are already in the literature, together with a previously unpublished theorem of G. Savin on exceptional theta correspondences, included as an appendix. The proof that the map is a bijection is arithmetic in nature, and specifically uses automorphy lifting theorems. These can be applied thanks to a recent result of Hundley and Liu on automorphic descent from $GL(7)$ to $G_2$. 
\bigskip
\bigskip

Nous construisons une param\'etrisation de Langlands des repr\'esentations supercuspidales de $G_2$ sur un corps $p$-adique.  Plus pr\'ecis\'ement,
pour chaque extension $K/\QQ_p$ nous construisons une application bijective
   $$ \CL_g : \CA^0_g(G_2,K) \longrightarrow   \CG^0(G_2,K)$$
de l'ensemble des repr\'esentations supercuspidales {\it g\'en\'eriques} de $G_2(K)$ vers l'ensemble d'homomorphismes continus et irr\'eductibles
$\rho : W_K \to G_2(\CC)$, o\`u $W_K$ d\'esigne le groupe de Weil de $K$.   Pour construire  l'application il suffit de r\'eunir des arguments qui sont d\'ej\`a dans la litt\'erature, plus un th\'eor\`eme in\'edit de G. Savin sur les correspondances th\^eta exceptionnelle, qui est d\'emontr\'e dans un appendice \'ecrite par Savin.  La d\'emonstration de la bijectivit\'e de l'application est de nature arithm\'etique, et utilise notamment des th\'eor\`emes de rel\`evement automorphes.  Ceux-ci s'applique \`a notre probl\`eme gr\^ace \`a un r\'esultat r\'ecent de Hundley et Liu sur la d\'escente automorphe de $GL(7)$ vers $G_2$. 
\end{abstract}
 
\maketitle

\setcounter{tocdepth}{1}
\tableofcontents

\section*{Introduction}

The purpose of this article is to construct a Langlands parameterization of supercuspidal representations of $G_2$ over a $p$-adic field. More precisely, for any finite extension $K / \QQ_p$ we will construct a bijection
\[ \CL_g :  \CA^0_g(G_2,K) \rightarrow \CG^0(G_2,K) \]
from the set of generic supercuspidal representations of $G_2(K)$ to the set of  irreducible continuous homomorphisms $\rho : W_K \to G_2(\CC)$  with $W_K$ the Weil group of $K$ (more precisely, between sets of equivalence classes). The construction of the map is simply a matter of assembling arguments that are already in the literature; the article \cite{KLS10} effectively contains the construction, although it doesn't specifically point out the application to supercuspidal representations.  The proof of surjectivity is an application of a recent result of Hundley and Liu \cite{HL}, which allows us to carry out a strategy, based on automorphy lifting theorems, that was initially developed in \cite{BHKT}
as an application of Vincent Lafforgue's global parameterization of automorphic representations over function fields. The proof of injectivity also uses global arithmetic methods, including automorphy lifting theorems and the Ramanujan conjecture for self-dual, regular algebraic automorphic representations of $GL(n)$, alongside known results on liftings (especially \cite{SWe, Xu}).

The parameterization is constructed in two steps.  First, following \cite{GRS97, GS04, SWe}, among other references, we use the exceptional dual reductive pair $(G_2,PGSp(6))$ in $E_7$ to define local and global correspondences from representations of $G_2$ to representations of $PGSp(6)$.  We then lift to $Sp(6)$ and use the functorial transfer of \cite{CKPS} to obtain an automorphic representation of $GL(7)$.  Using the local Langlands correspondence for $GL(n)$, we can thus obtain
a parameterization of supercuspidal representations of $G_2$ by Galois parameters with values in $GL(7)$.  We use a global argument and Chebotarev density (following \cite{Ch}) to show that the parameter takes values in the image of $G_2$ under its $7$-dimensional irreducible representation $r_7$.

The proof of surjectivity is arithmetic. For the moment, let $K$ be a $p$-adic field and let $\rho$ be a continuous homomorphism
$$\rho:  W_K \ra G_2(\CC).$$
We assume $\rho$ is {\it irreducible}:  that its image is contained in no proper parabolic subgroup.  Since the image is finite, we may replace the coefficient field $\CC$ by a sufficiently large finite field $k$ of characteristic $\ell \neq p$.  Following Moret-Bailly we show first that $K$ may be viewed as the completion at a $p$-adic place $v$ of a totally real field $F$, and that $\rho$ can be extended to a surjective homomorphism $\operatorname{Gal}(\overline{F}/F) \ra G_2(k)$ that is {\it odd}, in an appropriate sense.  We then use  the lifting method in \cite{KW}  to lift $\rho$ to a homomorphism $\tilde{\rho}:  \operatorname{Gal}(\overline{F}/F)\ra G_2(W(k))$ in such a way that  $r_7\circ \tilde{\rho}$ is geometric, in the sense of Fontaine--Mazur, and Hodge--Tate regular.  

Now we can apply automorphy lifting theorems, as in \cite{BGGT}, to show that $r_7 \circ \tilde{\rho}$ is potentially automorphic -- that its restrictions to appropriate totally real Galois extensions $F'/F$ are attached to a cuspidal cohomological self-dual automorphic representation $\pi'$ of $GL(7,\ad_{F'})$.  Choosing $F'$ carefully, we can then descend $\pi'$ to an automorphic representation $\pi''$ of $GL(7,\ad_{F''})$ over the fixed field $F''$ of a decomposition group $\operatorname{Gal}(F'_{v'} / F_v) \subset \operatorname{Gal}(F'/F)$.  At this point we apply the result of Hundley and Liu to show that $\pi''$ is in the image of the functorial transfer from $G_2(\ad_{F''})$ to $GL(7,\ad_{F''})$ of an automorphic representation $\Pi$ of $G_2(\ad_{F''})$, and we conclude by observing that the local component $\Pi_v$ is supercuspidal and has parameter $\rho$.  As a bonus, the construction of \cite{HL} provides a globally generic $\Pi$, so we see that $\rho$ is the parameter of a generic supercuspidal representation.

There has been a good deal of work on the local representation theory of as well as the automorphic theory of $G_2$.  Notably, the articles \cite{GS04}, \cite{SW07}, and \cite{SWe} come very close to establishing a complete local Langlands correspondence for $G_2$ and to relate the correspondence to the exceptional theta correspondence used here\footnote{While revising this article we learned of the new preprint \cite{GS21} of Gan and Savin that establishes Howe duality as well as
a dichotomy result for these exceptional theta correspondences.   It is likely that some of the arguments in the present paper can now be simplified  and many of the references can be consolidated, but we have not had time to study the new paper.}
; the article \cite{HL} comes very close to characterizing the image of functoriality from $G_2$ to $GL(7)$.  The purpose of this article is 
not to replace the articles just cited -- indeed, the results of these articles are used crucially in the proof of our main theorem -- but rather to illustrate the possibility of 
applying a combination of arithmetic and automorphic methods to the local correspondence.

\subsection*{Acknowledgements}

As mentioned above, this paper implements a strategy that was developed in our joint paper with Gebhard B\"ockle, and we are grateful to him for many discussions.  Thanks are due to Joseph Hundley and Baiying Liu for bringing to our attention their recent result on descent for $G_2$, on which our argument crucially depends.   We also thank Jeff Adams, Wee Teck Gan, Dihua Jiang, Aaron Pollack, and Gordan Savin for help with references, and Freydoon Shahidi for pointing out
an error in the $L$-function calculation in the original proof of Proposition \ref{prop_global_data_implies_injectivity}.  We  thank Savin for agreeing to write the appendix that proves a global genericity result that allows us to define the local parameterization unambiguously.  

We also thank the anonymous referees for their very careful reading, and for their many suggestions that, we hope, have made the text more readable.

M.H. was partially supported by NSF Grant DMS-1701651.    C.K. was partially supported by NSF Grant DMS-1601692. J.T.'s work received funding from the European Research Council (ERC) under the European Union's Horizon 2020 research and innovation programme (grant agreement No 714405). 

\subsection*{Notation}

If $K$ is a perfect field, we will write $\Gamma_K$ for its Galois group relative to a fixed algebraic closure. When $K$ is a number field, we will fix an algebraic closure $\overline{K} / K$, algebraic closures $\overline{K}_v / K_v$ for each place $v$ of $K$, and embeddings $\overline{K} \to \overline{K}_v$ extending $K \to K_v$. These choices determine embeddings $\Gamma_{K_v} \to \Gamma_K$ for each place $v$ of $K$. If $v$ is a finite place, then $I_{K_v} \subset \Gamma_{K_v}$ is the inertia group.

We write $\epsilon : \Gamma_K \to \ZZ_\ell^\times$ for the $\ell$-adic cyclotomic character. By abuse of notation, we also write $\epsilon$ for the pushforward of this character to the group of units of any $\ZZ_\ell$-algebra.

If $F$ is a totally real number field, $n$ is an \emph{odd} integer, and $\pi$ is a cuspidal, regular algebraic automorphic representation of $GL(n, \mathbb{A}_F)$ which is self-dual, in the sense that $\pi \cong \pi^\vee$, then for any isomorphism $\iota : \overline{\QQ}_\ell \to \CC$ there is an associated semi-simple $\ell$-adic Galois representation $r_\iota(\pi) : \Gamma_F \to GL(n, \overline{\QQ}_\ell)$. This is characterized, up to isomorphism, by its compatibilty with the local Langlands correspondence for $GL(n)$ at finite places. More precisely, if $v \nmid \ell$ is any finite place of $F$ then there is an isomorphism
\[ \operatorname{WD}(r_\iota(\pi)|_{\Gamma_{F_v}})^{F-ss} \overset{\sim}{\to} \iota^{-1} \operatorname{rec}_{F_v}( \pi_v). \]
(The notations here are as defined in \cite[pp. 509--510]{BGGT}: $\operatorname{WD}$ denotes the Weil--Deligne representation associated to an $\ell$-adic representation, $F-ss$ denotes the Frobenius-semisimplification of a Weil--Deligne representation, and $\operatorname{rec}_{F_v}$ is the local Langlands correspondence for the group $GL(n, F_v)$.) 

There is an isomorphism $r_\iota(\pi)^\vee \cong r_\iota(\pi)$. We note that our representation $r_\iota(\pi)$ differs by a Tate twist from the representation $r_{l, \iota}(\pi)$ defined in \cite[Theorem 2.1.1]{BGGT}. Our normalization, which only makes sense when $n$ is odd, suits our purposes here since we want representations $\pi$ which arise as functorial lifts from $G_2$ to give rise to Galois representations which are pure of weight 0.

\section{Galois parameterization of $G_2$}\label{sec_galois_par}

Let $G_2$ be the split group of that type over $\ZZ$. Let $K$ be a local field of characteristic 0 and let $\CA_g(G_2,K)$ denote the set of equivalence classes of generic irreducible admissible  representations of $G_2(K)$ over $\CC$.  If $K$ is non-archimedean, we let $\CA^0_g(G_2,K) \subset \CA_g(G_2,K)$ denote the subset of {\it supercuspidal} representations.   Let  $\CG(G_2,K)$ denote the set of $G_2(\CC)$-conjugacy classes of $G_2$-completely reducible parameters 
$$\rho : W_K \ra G_2(\CC),$$ and let $\CG^0(G_2,K) \subset \CG(G_2,K)$ denote the subset of classes of $G_2$-irreducible parameters. We define $\CG(GL(7), K)$ and $\CG^0(GL(7), K)$ similarly.

The aim of \S \ref{sec_galois_par} is to collect lifting results scattered in the literature (among them \cite{GS04, GJ, GRS97, GS98, GW, HPS, HL, Li99, MS, SWe, SW15}) to construct a map
\begin{equation}\label{param}  \CL_g:  \CA^0_g(G_2,K) \rightarrow \CG^0(G_2,K).
\end{equation}
We begin by constructing a map 
\begin{equation}\CL_g' : \CA_g(G_2, K) \to \CG(GL(7), K)
\end{equation}
using purely local means. Conjugacy results for $G_2$ (see \cite{Gri95}) imply that the map 
\begin{equation} r_{7, \ast} : \CG(G_2, K) \to \CG(GL(7), K)
\end{equation}
determined by the standard representation $r_7$ of $G_2$ is injective, so the main problem is to show that that $\CL'_g(\CA_g^0(G_2, K))$ lies in the image of $r_{7, \ast}$. This we achieve using a global argument.

We note that if $K$ is a local field of positive characteristic, Genestier and Lafforgue construct the analogue of the map $\CL_g'$ in \cite{GLa} without reference to the theta lift, and at the same time show that its image is contained in the image of $r_{7, \ast}$.

\subsection{Local generic theta lift}\label{thetaR}

Let $K$ be a local field of characteristic 0, and let $E_i$ $(i = 6, 7)$ denote the split adjoint reductive group of that type. We set $H_i = PGL(3)$ (if $i = 6$) and $H_i = PSp(6)$ (if $i = 7$). Let $\theta_{i,K}$ be the minimal representation of $E_i(K)$, which we consider by restriction
to be a representation of $G_2(K)\times H_i(K)$, $i = 6, 7$.  When $K$ is archimedean we work with the Harish-Chandra module of $\theta_{i,K}$ relative to a choice of maximal compact subgroup of $E_i(K)$ that contains a chosen product of maximal compact subgroups of $G_2(K)$ and $H_i(K)$.  
Let $\pi$ be an irreducible admissible representation of $G_2(K)$, and let
$\theta_{i,K,[\pi]}$ denote the maximal quotient of $\theta_{i,K}$ which is isotypic for $\pi$ as representation of $G_2(K)$.  Then we write
\begin{equation}\label{thetalocal}
\theta_{i,K,[\pi]} = \pi \otimes \Theta_i(\pi)
\end{equation}
where $\Theta_i(\pi)$ is a smooth representation of $H_i(K)$.  %It is known \cite{  }  that  $\Theta_i(\sigma)$ is of finite length and in particular is an admissible representation of $H_i(K)$.  
\begin{prop}\label{prop_uniqueness_of_local_theta_lift}
	\begin{enumerate}
		\item 	Suppose that $K$ is non-archimedean and that $\pi$ is generic. Then $\Theta_7(\pi)$ admits a unique generic subquotient.
		\item Suppose that $K = \RR$ and $\pi$ is a discrete series representation. If $\Theta_7(\pi)$ admits a generic constituent, then it is uniquely determined by $\pi$, up to isomorphism.
	\end{enumerate}
\end{prop}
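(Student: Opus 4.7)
The plan is to derive both parts from a single uniqueness statement about the minimal representation $\theta_{7,K}$: its \emph{bi-Whittaker} coinvariants are at most one-dimensional. Let $U_G\subset G_2$ and $U_H\subset H_7 = PSp(6)$ be the unipotent radicals of Borel subgroups, equipped with generic characters $\psi_G, \psi_H$. The technical crux is
\[
  \dim (\theta_{7,K})_{U_G\times U_H,\; \psi_G\otimes\psi_H} \leq 1,
\]
where in the archimedean setting the twisted Jacquet module is replaced by the space of continuous $(U_G\times U_H, \psi_G\otimes\psi_H)$-equivariant functionals on the Casselman--Wallach globalization. This rigidity reflects the extremely degenerate Fourier structure of exceptional minimal representations and should be extractable from the nilpotent-orbit and Fourier-coefficient analyses carried out in \cite{GRS97, GS04, SW07, SWe}.

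Granting this, part (i) follows quickly. The twisted Jacquet functor is exact on smooth representations of a $p$-adic group. Applied to the $G_2(K)\times H_7(K)$-equivariant surjection $\theta_{7,K}\twoheadrightarrow \theta_{7,K,[\pi]} = \pi\otimes \Theta_7(\pi)$, it produces a surjection
\[
  (\theta_{7,K})_{U_G\times U_H,\; \psi_G\otimes\psi_H} \twoheadrightarrow \pi_{U_G,\psi_G}\otimes \Theta_7(\pi)_{U_H,\psi_H}.
\]
Since $\pi$ is generic, $\pi_{U_G,\psi_G}$ is one-dimensional, and hence $\Theta_7(\pi)_{U_H,\psi_H}$ has dimension at most one. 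By exactness, any irreducible generic subquotient $\sigma$ of $\Theta_7(\pi)$ contributes its one-dimensional Whittaker quotient $\sigma_{U_H,\psi_H}$ as a subquotient of $\Theta_7(\pi)_{U_H,\psi_H}$; two non-isomorphic generic subquotients would force $\dim \Theta_7(\pi)_{U_H,\psi_H} \geq 2$, their contributions being distinguished, for example, by central characters or by their positions in a Jordan--H\"older series of a finite-length subrepresentation that realizes them both. Thus $\Theta_7(\pi)$ admits at most one irreducible generic subquotient up to isomorphism.

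Part (ii) proceeds in the same way, transposed into the $(\mathfrak{g},K)$-module and Casselman--Wallach framework. Kostant's theorem on Whittaker models for real reductive groups guarantees that any discrete series representation $\pi$ of $G_2(\RR)$ carries a one-dimensional space of continuous $(U_G,\psi_G)$-equivariant functionals on its smooth globalization, and the same holds for any generic irreducible admissible representation of $H_7(\RR)$. Using these Whittaker functional spaces in place of twisted Jacquet modules, and invoking the archimedean analog of the bi-Whittaker uniqueness for $\theta_{7,\RR}$, the same dimension count forces any generic constituent of $\Theta_7(\pi)$ to be determined up to isomorphism.

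The principal obstacle is precisely the bi-Whittaker uniqueness for $\theta_{7,K}$. In the non-archimedean case it is implicit in the existing literature on the $G_2\times PGSp(6)$ dual pair in $E_7$; in the archimedean case the driving orbit computation is the same, but one must pass from an algebraic Jacquet-type statement to continuous functionals on a Fr\'echet representation. Once this input is in hand, the remainder of the argument is a formal consequence of the exactness of twisted Jacquet and the uniqueness of Whittaker models for generic irreducibles on both sides.
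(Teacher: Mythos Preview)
Your approach for part~(i) is in the same spirit as the paper's, but as written it establishes only uniqueness, not existence. You assert $\dim(\theta_{7,K})_{U_G\times U_H,\psi_G\otimes\psi_H}\leq 1$ and deduce via the surjection that $\Theta_7(\pi)_{U_H,\psi_H}$ is at most one-dimensional; hence at most one generic subquotient. But the proposition claims a generic subquotient \emph{exists}. The paper obtains this from the stronger statement proved in Savin's appendix (Theorem~\ref{T:GG}, equivalently \cite[Proposition~19]{GS04}): as a $G_2(K)$-module one has an \emph{isomorphism}
\[
(\theta_{7,K})_{U_H,\psi_H}\;\cong\;\ind_{U_0}^{G_2}\psi_0,
\]
the Gelfand--Graev representation. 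Since twisted Jacquet commutes with taking the maximal $\pi$-isotypic quotient, and since every generic irreducible $\pi$ occurs as a quotient of $\ind_{U_0}^{G_2}\psi_0$ with multiplicity one, this forces $\Theta_7(\pi)_{U_H,\psi_H}$ to be \emph{exactly} one-dimensional. Your bi-Whittaker bound is a corollary of this identification but does not by itself recover existence; to close the gap you would need either this isomorphism or an independent nonvanishing argument.

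For part~(ii) the paper proceeds quite differently and more simply. Rather than an archimedean bi-Whittaker statement (which would require delicate continuous-functional analysis on Casselman--Wallach completions, and is not readily available in the cited literature), the paper observes that the infinitesimal character of any constituent of $\Theta_7(\pi)$ is determined by that of $\pi$ and is regular integral; then Fact~\ref{inf} (via Salamanca-Riba and Vogan) pins down the unique generic unitary representation of $PSp(6,\RR)$ with that infinitesimal character as a specific discrete series. This bypasses entirely the analytic difficulties your outline defers.
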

\begin{proof}
	The first part is \cite[Corollary 20]{GS04}.  This is based on Proposition 19 of \cite{GS04}, whose proof there is sketched.  The proposition is restated, with a complete proof, as Theorem \ref{thm:main-appendix} of Savin's appendix.   The second follows from the fact that $\Theta_7(\pi)$ has regular integral infinitesimal character determined by that of $\pi$, hence any generic constituent of $\Theta_7(\pi)$ must be the unique discrete series representation with that infinitesimal character. We describe this in more detail below.
\end{proof}
When $\Theta_7(\pi)$ has a unique generic subquotient, we denote it by $\theta_7(\pi)$. We can specify $\theta_7(\pi)$ precisely in two important special cases. First, the unramified, non-archimedean case:
\begin{proposition}[{\cite[Theorem 3.5]{GJ}, \cite[Theorem 1.1]{SW07}}]\label{unramG2}  Suppose that $K$ is non-archimedean and that $\pi$ is an unramified, generic representation of $G_2(K)$. Then $\theta_7(\pi)$ is the unramified representation of $PSp(6, K)$ determined by the embedding $G_2 \to Spin(7)$ of $L$-groups. 
\end{proposition}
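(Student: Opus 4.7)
The plan is to first verify that $\theta_7(\pi)$ is nonzero and unramified, and then to identify its Satake parameter with $r_7(s)$, where $s \in G_2(\CC)$ is the Satake parameter of $\pi$ and $r_7 : G_2 \hookrightarrow Spin(7)$ is the dual-group embedding.

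For unramifiedness, I would fix hyperspecial maximal compact subgroups $J_1 \subset G_2(K)$ and $J_2 \subset PSp(6,K)$ with $J_1 \times J_2$ contained in a hyperspecial maximal compact subgroup $J \subset E_7(K)$. The minimal representation $\theta_{7,K}$ has a one-dimensional space of $J$-fixed vectors, hence in particular a nonzero $(J_1 \times J_2)$-fixed vector $v_0$. Pairing $v_0$ against the spherical vector of $\pi$ produces a nonzero $J_2$-fixed vector in $\Theta_7(\pi)$. By Proposition \ref{prop_uniqueness_of_local_theta_lift} the unique generic subquotient $\theta_7(\pi)$ inherits this vector, so $\theta_7(\pi)$ is nonzero and unramified.

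For the identification, the spherical Hecke algebras $\CH(G_2(K),J_1)$ and $\CH(PSp(6,K),J_2)$ act on the line $\CC \cdot v_0$ by characters which, via the Satake isomorphism, correspond to semisimple classes $s_1 \in G_2(\CC)$ and $s_2 \in Spin(7,\CC)$. The pairing with $\pi$ forces $s_1 = s$, and $s_2$ is by construction the Satake parameter of $\theta_7(\pi)$, so the proposition reduces to the identity $s_2 = r_7(s_1)$. I would prove this by computing a Jacquet module of $\theta_{7,K}$ along a parabolic $P \subset E_7$ chosen so that $P \cap (G_2 \times PSp(6))$ is contained in a product of Borel subgroups. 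One expects $(\theta_{7,K})_P$ to be described as (a subquotient of) an induction from the product of Borel tori by a character whose $G_2$- and $PSp(6)$-components are related precisely via $r_7$; reading off this character from the structure of the minimal representation then yields $s_2 = r_7(s_1)$.

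The main obstacle is this Jacquet module computation: the minimal representation of $E_7$ is highly constrained but exceptional, and extracting the relevant character requires a detailed analysis of the root datum of $E_7$ together with the embedding of the dual pair. The route of \cite{GJ} circumvents this by a global-to-local comparison of unramified $L$-factors via an integral representation, while \cite{SW07} uses the Heisenberg parabolic of $E_7$ together with explicit formulas for the spherical vector of the minimal representation. In either approach, the final step is a finite check that the dual pair embedding $G_2 \times PSp(6) \hookrightarrow E_7$ matches the $L$-group embedding $r_7 : G_2(\CC) \hookrightarrow Spin(7,\CC)$ at the level of Cartan tori.
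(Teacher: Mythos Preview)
There is a real gap in your first paragraph. You produce a $J_2$-fixed vector in $\Theta_7(\pi)$, but the claim that ``the unique generic subquotient $\theta_7(\pi)$ inherits this vector'' does not follow from Proposition~\ref{prop_uniqueness_of_local_theta_lift}, which only asserts uniqueness of the generic subquotient. An unramified principal series can have distinct spherical and generic constituents, so showing that these coincide for $\Theta_7(\pi)$ is part of the content of the proposition, not a formality. A related issue: asserting that the two Hecke algebras act on the \emph{line} $\CC v_0$ presumes that the $(J_1\times J_2)$-fixed space in $\theta_{7,K}$ is one-dimensional, which is strictly stronger than one-dimensionality of the $J$-fixed space and already requires nontrivial input (this is essentially what the spherical-vector formula of \cite{SW07} is designed to supply).

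The paper's route, following \cite{GJ}, sidesteps this circularity. One works with the theta lift to $GSp(6,K)$ and compares the local Euler factors for the $8$-dimensional spin representation of the dual group $GSpin(7)$; genericity of the lift is established \emph{in the course of} this $L$-factor comparison rather than assumed in advance, and matching Euler factors pins down the Satake parameter. The observation that the lift factors through $PSp(6,K)$ then forces the parameter into $Spin(7)\subset GSpin(7)$. Your Jacquet-module strategy in the second paragraph is closer in spirit to \cite{SW07}, but, as you yourself acknowledge, the actual computation is left as ``the main obstacle''; so as written the proposal is a reasonable outline of possible approaches rather than a proof.
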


\begin{proof} The article \cite{GJ} determines the Satake parameter of the theta lift $\Theta'(\pi)$ of $\pi$ to $GSp(6,K)$ by comparing the local Euler factors for the $8$-dimensional Spin representation of the Langlands dual group $GSpin(7)$ of $GSp(6)$ (see the displayed formula at the bottom of p.\ 42 of \cite{GJ}).  The genericity of 
	$\Theta'(\pi)$ is proved in the course of this comparison.  The representation $\Theta'(\pi)$ is pulled back from the representation $\Theta_7(\pi)$ of $PSp(6,K)$, which implies that the Satake parameter of $\Theta'(\pi)$ lies in the subgroup $Spin(7)$ of $GSpin(7)$ and has the indicated form.  See also the proof of \cite[Proposition 5.2]{KLS10}.
\end{proof}
Next, the real, discrete series case. We first recall an important fact. Let $G$ be a quasi-split connected reductive group over $\RR$ such that $G(\RR)$ admits discrete series representations, and let $K \subset G(\RR)$ denote a maximal compact subgroup.  Let $B \subset G(\RR)$ be a Borel subgroup with unipotent radical $N$.  Let $Z(\fg)$ denote the center of the enveloping algebra $U(\fg)$ of the complexified Lie algebra of $G$.   If $\pi$ is an irreducible representation of $G(\RR)$, we let $\xi_\pi:  Z(\fg) \ra \CC$ denote its infinitesimal character.   We recall
\begin{fact}\label{inf}  If $W$ is an irreducible (algebraic) representation of $G$, then, up to infinitesimal equivalence, there is a bijection between
Weyl chambers $C$ for $G$, modulo the Weyl group of $K$, such that all simple roots in the chamber are non-compact,
and unitary representations $\pi(C,W)$ of $G(\RR)$ such that 
\begin{itemize}
\item $\xi_W = \xi_{\pi(C,W)}$. 
\item $\pi(C,W)$ is generic.
\end{itemize}
Moreover, $\pi(C,W)$ is discrete series. 
\end{fact}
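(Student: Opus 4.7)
The plan is to combine Harish-Chandra's parameterization of discrete series, Kostant's theorem on Whittaker models of discrete series (together with its extension to real quasi-split groups due to Vogan and Wallach), and a classification argument that rules out other generic unitary representations with the prescribed infinitesimal character.

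First, since $G(\RR)$ admits discrete series, $G$ has a compact Cartan subgroup $T \subset K$; let $\Phi$ denote the root system of $G$ relative to $T$. A root $\alpha \in \Phi$ is \emph{compact} if its root space lies in $\fk_\CC$, and \emph{non-compact} otherwise. Harish-Chandra's classification furnishes a canonical bijection between $W_K$-orbits of Weyl chambers for $\Phi$ and the discrete series $L$-packet with infinitesimal character $\xi_W$; write $\pi(C)$ for the member attached to the orbit of $C$. Each $\pi(C)$ is irreducible unitary and satisfies $\xi_{\pi(C)} = \xi_W$.

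Second, I would invoke Kostant's theorem (for complex groups) and its generalization to real quasi-split groups due to Vogan and Wallach: the discrete series $\pi(C)$ admits a non-zero continuous Whittaker functional, with respect to a fixed non-degenerate character of the unipotent radical $N$ of $B$, if and only if every simple root of the positive system attached to $C$ is non-compact, and in that case the functional is unique up to scalar. Setting $\pi(C,W) := \pi(C)$ for such a chamber produces a well-defined injection from $W_K$-orbits of chambers with all simple roots non-compact into the set of generic unitary representations of $G(\RR)$ with infinitesimal character $\xi_W$; by construction each such $\pi(C,W)$ is discrete series, which establishes the ``moreover'' clause.

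The main obstacle is surjectivity: showing that every generic irreducible unitary representation $\pi$ of $G(\RR)$ with $\xi_\pi = \xi_W$ arises in this way. The infinitesimal character $\xi_W$ is regular and integral, so the set of irreducibles with this infinitesimal character is finite and can be placed within the Langlands classification. My plan here is to combine the Casselman--Shahidi description of generic standard modules with the classification of unitary Langlands quotients attached to regular integral infinitesimal characters to exclude any non-tempered generic unitary example: a proper Langlands quotient with non-zero exponents in this regular integral range cannot be simultaneously generic and unitary. Any $\pi$ with the two listed properties must then be tempered with regular integral infinitesimal character, hence a member of the fixed discrete series $L$-packet, and the first two steps identify $\pi$ with a unique $\pi(C,W)$.
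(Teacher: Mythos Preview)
Your construction of the injection (Harish-Chandra's parameterization of discrete series together with the Kostant--Vogan characterization of generic discrete series as those attached to chambers with all simple roots non-compact) matches the paper's, which cites \cite[Lemma 5.7]{Ad} and \cite[Theorem 6.2]{V} for exactly this step.

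The surjectivity argument, however, has two underspecified places. First, your phrase ``classification of unitary Langlands quotients attached to regular integral infinitesimal characters'' needs a name: the theorem you want is Salamanca-Riba's \cite{SR}, asserting that every irreducible unitary representation with infinitesimal character $\xi_W$ is an $A_{\mathfrak{q}}(\lambda)$. Without it, there is no mechanism to rule out complementary-series-type phenomena in the regular integral range. Second, the inference ``tempered with regular integral infinitesimal character, hence a member of the fixed discrete series $L$-packet'' is not automatic: one must argue that for a proper cuspidal parabolic $P = MAN$ the Cartan $T_M A$ carries a real root, so that unitarily induced tempered representations from $M$ have singular infinitesimal character. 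This holds when $G(\RR)$ has a compact Cartan, but it requires a Cayley-transform argument you have not supplied.

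The paper's route avoids the detour through temperedness altogether. It combines Salamanca-Riba with \cite[Theorem 6.16]{VZ}, which says an $A_{\mathfrak{q}}(\lambda)$ is a \emph{proper} Langlands quotient unless $\mathfrak{q}$ is a Borel, i.e.\ unless the representation is discrete series. Since Kostant \cite{kos} and Vogan \cite[Theorem 6.2]{V} show a generic (equivalently, large) representation is never a proper Langlands quotient, a generic unitary representation with infinitesimal character $\xi_W$ is forced to be discrete series in one stroke. Your approach can be made to work once the two gaps above are filled, but the paper's is shorter and cites the sharp theorems directly.
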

The experts tell us that the literature contains no explicit statement of this well-known (and frequently cited) fact.  
At the request of the referee we provide a sketch.   We thank Gordan Savin and Jeff Adams for pointing out
errors in our first attempt and providing some missing references.
 
Salamanca-Riba proved in \cite{SR} that a unitary representation with infinitesimal character $\xi_W$
is necessarily cohomological (an $A_{\mathfrak{q}}(\lambda)$).  Now Theorem K of \cite{kos} asserts that a generic representation
is ``large'' in the sense of \cite{V}, and it thus follows from Theorem 6.2 of \cite{V} that a generic unitary representation of $G(\RR)$ can never be
a Langlands quotient of a reducible principal series representation.  But Theorem 6.16 of \cite{VZ} implies that an 
$A_{\mathfrak{q}}(\lambda)$ is a (proper) Langlands quotient unless $\mathfrak{q}$ is a Borel subgroup.  Thus
a generic $A_{\mathfrak{q}}(\lambda)$ is  in the discrete series, by our hypothesis on $G$.   Then the bijection follows from \cite[Lemma 5.7]{Ad}, which is a restatement in more contemporary language of \cite[Theorem 6.2]{V} as applied to discrete series.
 %Here we are using the fact that , and since $\pi(W)$ is cohomological it is necessarily discrete series.

%Theorem 6.2 of Vogan's paper \cite{V} -- specifically, the equivalence between conditions (a) and (f) -- asserts that a representation $\pi \in \Pi(\xi_W)$ is {\it large} if and only if the Harish-Chandra parameter of $\pi$  corresponds to a Weyl chamber with all its simple roots noncompact.   Theorem K of Kostant's paper \cite{kos}  allows us to replace the word ``large" by the word ``generic."  {\color{red} It remains to show that $\pi = \pi(W)$ is unique for the fixed choice of $\psi$.  }
One checks that when $G = G_2(\RR)$ or $G = PSp(6,\RR)$,  there is a unique Weyl chamber $C$ as in Fact \ref{inf}; thus we 
may write $\pi(W)$ instead of $\pi(C,W)$.   We follow the discussion
in \cite{Li97} of the Conjecture of Gross, which identifies the Harish-Chandra parameters of the generic discrete series of both groups explicitly.  
We  fix normalizations, first for $G_2$, and then for $PSp(6)$. Let $\omega_1$ denote the highest weight of the irreducible $7$-dimensional representation of $G_2$, and let $\omega_2$ be the other fundamental weight. Given non-negative integers $a, b$ we let $W(a,b)$ denote the irreducible representation of $G_2$ with highest weight $a\omega_1 + b\omega_2$, and write $\pi_{a,b}$ for $\pi(W(a,b))$, a discrete series representation of $G_2(\RR)$.  It follows
from Fact \ref{inf} that the generic representation for $G_2(\RR)$ with given infinitesimal character is unique; it is the one with the marking (3) on p. 191 of
\cite{Li97}.   For sufficiently regular infinitesimal character, $\pi_{a,b}$ belongs to the {\bf integrable} discrete series \cite{HS}.  Theorem \ref{arch} below is a partial confirmation of Gross's conjecture for generic discrete series.

We denote characters $Z(\mathfrak{sp}(6)) \ra \CC$ in the standard way by triples $(\alpha,\beta,\gamma)$ of integers with $\alpha > \beta > \gamma > 0$ (cf \cite{Li97}), and let 
$W(\alpha,\beta,\gamma)$ be the irreducible algebraic representation of $PSp(6)$ with the corresponding infinitesimal character.   

\begin{thm}\label{arch} 
	Let $a, b$ be non-negative integers; we assume $\pi_{a,b}$ to be in the integrable discrete series.  Then 
	$$\theta_7(\pi_{a,b}) = \pi(W(a+2b+3,a+b+2,b+1)).$$
\end{thm}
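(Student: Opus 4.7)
The plan is to reduce the theorem, via Proposition \ref{prop_uniqueness_of_local_theta_lift}(2), to two sub-claims: (i) $\Theta_7(\pi_{a,b})$ admits a generic constituent, and (ii) this constituent has infinitesimal character equal to $\xi_{W(a+2b+3,\, a+b+2,\, b+1)}$. Once (i) is established, the proposition guarantees that the generic constituent is the unique discrete series representation of $PSp(6,\RR)$ with that infinitesimal character, and Fact \ref{inf} then identifies it as $\pi(W(a+2b+3, a+b+2, b+1))$.

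Step (i) is the substantive content. The integrability hypothesis on $\pi_{a,b}$ is crucial here: it places us in the setting of J.-S. Li's non-vanishing theorems for archimedean theta lifts, which can be adapted to the exceptional dual pair $(G_2, PSp(6)) \subset E_7$ by working with Harish-Chandra modules relative to a maximal compact subgroup of $E_7(\RR)$ containing chosen maximal compacts of $G_2(\RR)$ and $PSp(6,\RR)$; the relevant inputs come from \cite{Li97, Li99}. Genericity of some constituent of $\Theta_7(\pi_{a,b})$ --- that is, the existence of a nonzero Whittaker functional --- is the archimedean counterpart of the Whittaker compatibility invoked in the non-archimedean arguments of \cite{GS04, SW07, SWe}, and I would extract it from an explicit description of Jacquet integrals attached to the minimal representation $\theta_{7,\RR}$ of $E_7(\RR)$.

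Step (ii) is an infinitesimal-character calculation. The infinitesimal character of any constituent of $\Theta_7(\pi_{a,b})$ is governed by the embedding of Langlands dual groups $G_2 \hookrightarrow Spin(7)$ arising from $r_7$; dually, this gives an embedding $\mathfrak{h}_{G_2} \hookrightarrow \mathfrak{h}_{B_3}$ of Cartan subalgebras. In the standard coordinates $\mathfrak{h}_{B_3} \cong \CC^3$ with root system $\{\pm\epsilon_i \pm \epsilon_j,\, \pm\epsilon_i\}$, one has $\mathfrak{h}_{G_2} \cong \{x_1+x_2+x_3=0\}$, with the six short roots of $G_2$ given by the restrictions of $\pm\epsilon_i$. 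Expressing $a\omega_1 + b\omega_2 + \rho_{G_2}$ in these coordinates and reducing modulo the Weyl group of $B_3$ (which acts by permutations and sign changes of the $\epsilon_i$), one recovers precisely the triple $(a+2b+3,\, a+b+2,\, b+1)$ with $\alpha > \beta > \gamma > 0$.

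The hard part will be (i): establishing non-vanishing together with genericity of the archimedean lift in this exceptional setting, where the general machinery for classical dual pairs does not apply verbatim. Once this is in hand, the infinitesimal-character bookkeeping in (ii), combined with the uniqueness statement built into Proposition \ref{prop_uniqueness_of_local_theta_lift}(2) and Fact \ref{inf}, completes the identification $\theta_7(\pi_{a,b}) = \pi(W(a+2b+3, a+b+2, b+1))$.
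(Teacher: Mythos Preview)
Your decomposition into (i) existence of a generic constituent and (ii) the infinitesimal-character computation is the right shape, and your treatment of (ii) is essentially what the paper invokes (citing \cite{HPS, Li97, Li99} for the correspondence of infinitesimal characters, which is independent of real forms). The gap is in (i), and it is precisely where the paper's argument diverges from yours.

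You propose a purely local archimedean argument for (i), appealing to Li's non-vanishing results and an ``explicit description of Jacquet integrals'' for the minimal representation of $E_7(\RR)$. But the literature you cite does not supply this: \cite{Li97} treats the split pair $(G_2(\RR), PSp(6,\RR))$ only for \emph{quaternionic} discrete series of $G_2(\RR)$, not generic ones, and Li's classical non-vanishing theorems do not transfer verbatim to this exceptional setting, as you yourself concede. There is also a second point you do not address: to invoke Fact~\ref{inf} you need the generic constituent to be \emph{unitary} (Salamanca-Riba's input is essential in the sketch of Fact~\ref{inf}), and a purely local construction gives no obvious reason for this.

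The paper sidesteps both issues by a global argument, and this is where the integrability hypothesis is actually used. One globalizes $\pi_{a,b}$ to a globally generic cuspidal automorphic representation $\Pi$ of $G_2$ over $\QQ$ (integrability is what makes the Poincar\'e-series-type construction of \cite{KLS08} go through at the real place). Theorem~\ref{globalGRS}, whose genericity clause is supplied by Savin's appendix, then shows that the global theta lift $\Theta_7(\Pi)$ is cuspidal and globally generic; its archimedean component is therefore generic and, being a local component of a cuspidal automorphic representation, unitary. Local--global compatibility identifies it with $\theta_7(\pi_{a,b})$, and now Fact~\ref{inf} applies. So the role of integrability is not to feed into a local non-vanishing theorem, but to enable globalization; the non-vanishing and genericity come from the global theory.
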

\begin{proof}
	See \cite[Theorem 5.4]{HPS}. In more detail, let us use the superscript $(?)^c$ to denote the compact form of a real reductive group.  The article \cite{HPS} treats the exceptional theta correspondence $(G_2(\RR),PSp(6, \RR)^c)$ (among others),  \cite{GW} treats (among others) the correspondence $(G_2(\RR)^c,PSp(6,\RR))$, and \cite{Li97} treats the  split case $(G_2(\RR),PSp(6,\RR))$ but only computes the correspondence for quaternionic discrete series of $G_2(\RR)$. However, as observed in \cite[p.\ 204]{Li97}, the correspondence of infinitesimal characters is independent of real forms.  The determination of the correspondence for infinitesimal characters of generic discrete series is completed in \cite{Li99} (see Table 1 on p.\ 375 of that paper).
	
	By Fact \ref{inf} this suffices to identify $\theta_7(\pi_{a, b})$, once we know that it is {\it unitary}.   This is where we use the hypothesis	that $\pi_{a,b}$ is in  the integrable discrete series.  Under that hypothesis, we can construct a globally generic cuspidal automorphic 
	representation $\Pi$ of $G_2(\QQ)$ with local component $\pi_{a,b}$ at the real place, as in the proof of Proposition \ref{prop_image_in_G_2} below.
	Theorem \ref{globalGRS} then asserts that the global theta lift $\Theta_7(\Pi)$	of $\Pi$ to an automorphic representation of $PSp(6,\QQ)$ does not vanish and is cuspidal.   In particular, the archimedean component $\Theta_7(\Pi)_\infty$ is generic and unitary.  By Proposition \ref{prop_uniqueness_of_local_theta_lift} and the
	compatibility of local and global correspondences, $\Theta_7(\Pi)_\infty = \theta_7(\pi_{a,b})$; this completes the proof.	
\end{proof}
\begin{subremark}  Gross's conjecture does not assume that $\pi_{a,b}$ is in the integrable discrete series 
and the hypothesis is certainly unnecessary.  The argument above applies
whenever $\pi_{a,b}$ can be realized as a local constituent of a globally generic cuspidal automorphic
representation.  Undoubtedly this is always possible, but the construction we use here requires the archimedean component to be in
the integrable discrete series.
\end{subremark}

\subsection{Global generic theta lift}

Now let $F$ be a totally real number field.  When $G = E_i$, we let $\CA(G)$ denote the space of automorphic forms on $G(F)\backslash G(\ad_F)$, and we let $\theta_i := \theta_G \subset \CA(G)$ denote the minimal automorphic representation, as described in the article \cite{GJ} (see also \cite{GRS97}). Let $\pi$ be a cuspidal automorphic representation of $G_{2}(\ad_F)$.  We define $\Theta_7(\pi)$ and $\Theta_6(\pi)$ to be the spaces of automorphic forms on $H_7 = PSp(6, \ad_F)$ and $H_6 = PGL(3, \ad_F)$, respectively, defined to be the span of the functions
$\Theta_i(f_\theta,\varphi)$, as $f_\theta \in \theta_G$ and $\varphi$ runs through the automorphic forms in the space of the {\it contragredient} $\pi^{\vee}$ of $\pi$, and where
\begin{equation}\label{thetaglobal}
\Theta_i(f_\theta,\varphi)(h) = \int_{[G_2]} f_\theta(g,h)\varphi(g) dg,\,\, h \in H_i(\ad_F).
\end{equation}
Here the notation $\int_{[G_2]}$ is the standard abbreviation of $\int_{G_2(F)\backslash G_2(\ad)}$, and $(g,h)$ is a variable element of 
$G_2(\ad_F) \times H_i(\ad_F) \subset E_i(\ad_F)$.  In contrast to \cite{GRS97}, we let $\varphi$ to belong to $\pi^{\vee}$ (or equivalently to the complex conjugate of $\pi$) to guarantee compatibility with the local correspondence defined below.

\begin{thm}\label{globalGRS} 
	Let $\pi$ be a cuspidal automorphic representation of $G_2(\ad_F)$. Then:
	\begin{enumerate}
		\item Let $\Pi$ be an irreducible subquotient of $\Theta_i(\pi)$. Then for each place $v$ of $F$, $\Pi_v$ is an irreducible subquotient of $\Theta_i(\pi_v)$. 
		\item Suppose that $\pi$ is globally generic and that $\pi_\infty$ is a discrete series representation. Then $\Theta_6(\pi) = 0$, and $\Theta_7(\pi)$ is cuspidal and globally generic. In particular, it is non-zero. 
	\end{enumerate}
\end{thm}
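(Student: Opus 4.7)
The plan is to handle the two assertions in sequence, following the template of \cite{GRS97} applied to the exceptional dual pair $(G_2, H_i) \subset E_i$. Part (1) is essentially formal once one has the right framework, while the substance of the theorem lies in part (2), where the main tool is a Whittaker unfolding against the Fourier coefficients of the minimal representation.

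For (1), I would invoke the restricted tensor product factorization of the minimal automorphic representation, $\theta_i \cong \bigotimes{}'_v \theta_{i,v}$, where $\theta_{i,v}$ is the local minimal representation of $E_i(F_v)$. The definition \eqref{thetaglobal} realizes $\Theta_i(\pi)$ as the image of an $H_i(\ad_F)$-equivariant map from $\theta_i \otimes \pi^\vee$, where the integration over $[G_2]$ enforces the $G_2$-isotypy. Decomposing into local factors and applying this to any irreducible constituent $\Pi = \bigotimes{}'_v \Pi_v$ of $\Theta_i(\pi)$ shows that each $\Pi_v$ must arise as a quotient of the pairing of $\theta_{i,v}$ with $\pi_v^\vee$, hence is a subquotient of $\Theta_i(\pi_v)$ as in \eqref{thetalocal}.

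For (2), the central step is to compute the Whittaker coefficient of $\Theta_7(f_\theta, \varphi)$ along the maximal unipotent $N \subset PSp(6)$ with respect to a generic character $\psi_N$. Formally,
\[
W_{\Theta_7(f_\theta,\varphi)}(h) = \int_{N(F)\backslash N(\ad_F)} \Theta_7(f_\theta,\varphi)(nh)\,\psi_N(n)^{-1}\,dn,
\]
and after substituting \eqref{thetaglobal} and interchanging integrals, the inner integral becomes the Fourier coefficient of $f_\theta$ along $N$. The small wavefront set of the minimal representation of $E_7$, combined with the orbit analysis of $N$ inside $E_7$, identifies this Fourier coefficient with (essentially) a Whittaker functional on $G_2$ applied to the restriction of $f_\theta$ to $G_2(\ad_F)$. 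This produces an expression of the form $\int_{[G_2]} \varphi(g)\, W^{G_2}_{f_\theta}(g)\,dg$, which is non-zero for suitable choices since $\pi^\vee$ is globally generic and $W^{G_2}_{f_\theta}$ exhausts the $G_2$-Whittaker data as $f_\theta$ varies; this yields both non-vanishing and global genericity of $\Theta_7(\pi)$. For cuspidality of $\Theta_7(\pi)$ and vanishing of $\Theta_6(\pi)$, I would compute the constant term of $\Theta_7(f_\theta,\varphi)$ along any proper parabolic of $PSp(6)$: the unfolding yields a Fourier coefficient of $\theta_7$ along the unipotent radical, which by the nilpotent-orbit constraints on the minimal representation either vanishes or factors through a lift to a smaller exceptional group. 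The assumption that $\pi_\infty$ lies in the discrete series rules out the non-cuspidal possibilities at the archimedean place (since the relevant Levi quotients do not support discrete series compatible with $\pi_\infty$), and $\Theta_6(\pi) = 0$ follows similarly, using that $PGL(3,\RR)$ admits no discrete series of the infinitesimal character that would have to match $\pi_\infty$.

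The main obstacle is the unfolding step: it requires the exceptional-group-specific identification of the Fourier coefficient of the minimal representation of $E_7$ along the Siegel unipotent of $PSp(6)$ with a $G_2$-Whittaker datum. This is a genuine computation involving the wavefront-set structure of the minimal representation and the branching of its Fourier expansion, and is the technical heart of \cite{GRS97}; I would cite it directly rather than reprove it. Everything else (tower arguments, local-global compatibility, and the archimedean ruling-out) is organizational around this single non-trivial input.
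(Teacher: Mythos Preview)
Your overall strategy matches the paper's: part (1) is formal from the restricted tensor product structure, and for part (2) the ingredients are a Whittaker unfolding for genericity, an archimedean argument for $\Theta_6(\pi)=0$, and the tower property for cuspidality. However, there is one substantive mis-attribution you should be aware of.

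The Whittaker unfolding you describe --- computing the $(N,\psi_N)$-coefficient of $\Theta_7(f_\theta,\varphi)$ and identifying the resulting Fourier coefficient of the minimal representation with a $G_2$-Whittaker datum --- is \emph{not} in \cite{GRS97}. What \cite[Theorem B]{GRS97} provides is the non-vanishing of $\Theta_7(\pi)$; the global genericity statement is precisely the content of Savin's appendix to this paper (Theorem~\ref{thm:main-appendix}), and is flagged in the introduction as previously unpublished. Your sketch of the argument (unfolding, orbit analysis on the rank-one cone $\Omega$, reduction to the $G_2$-Whittaker period of $\varphi$) is an accurate outline of what Savin's appendix does, but you cannot cite \cite{GRS97} for it.

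On the remaining points, the paper organizes the logic slightly differently from your sketch. Rather than computing constant terms of $\Theta_7$ directly and ruling out non-cuspidal pieces at infinity, the paper first establishes $\Theta_6(\pi)=0$ by the archimedean argument (citing \cite[\S 6]{KLS10}; your remark that $PGL(3,\RR)$ has no discrete series is the right mechanism), and then cuspidality of $\Theta_7(\pi)$ follows from the tower property of \cite{GRS97}, which says that the first non-vanishing lift in the tower is automatically cuspidal. This is cleaner than treating cuspidality and $\Theta_6=0$ in parallel, since the tower packages the constant-term computations once and for all.
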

\begin{proof}
	The first part is a formal consequence of the definition. The second part is a consequence of \cite[Theorem B]{GRS97} (which shows that $\Theta_7(\pi)$ is non-zero), the vanishing of $\Theta_6(\pi)$ (which follows from the fact that $\pi_\infty$ is discrete series, using the same argument as in \cite[\S 6]{KLS10}), and Theorem 1.4 of Savin's appendix to this paper (which shows that $\Theta_7(\pi)$ is globally generic).
\end{proof}
We will only apply Theorem \ref{globalGRS} in the case that $\pi$ is a cuspidal, globally generic representation of $G_2(\ad_F)$ such that $\pi_\infty$ is discrete series. In this case, $\Theta_7(\pi)$ is a cuspidal, globally generic representation of $PSp(6, \ad_F)$, and we write $\theta_7(\pi)$ for the unique globally generic cuspidal irreducible constituent of its restriction to $Sp(6,F)\backslash Sp(6,\ad_F)$. We observe that for every place $v$ of $F$, we have the equality $\theta_7(\pi_v) = \theta_7(\pi)_v$ (compatibility with local generic theta correspondence).

\subsection{Lifting to $GL(7)$}

There are at least two ways of lifting from $Sp(6)$ to $GL(7)$ (see \cite{CKPS, A}). We use the lifting constructed in  \cite{CKPS}. 
\begin{thm}\label{thm_Arthur}
	Let $K$ be a local field of characteristic 0. Then there exists a map $\Psi$ from the set of (isomorphism classes of) generic irreducible admissible representations of $Sp(6, K)$ to the set of (isomorphism classes of) generic irreducible admissible representations of $GL(7, K)$. The map $\Psi$ has the following properties:
	\begin{enumerate}
		\item If $K$ is non-archimedean then $\Psi$ is injective.
		\item If $K$ is non-archimedean and $\pi$ is an unramified representation of $Sp(6, K)$, then $\Psi(\pi)$ is the unramified representation of $GL(7, K)$ determined by the natural embedding $SO(7) \to GL(7)$ of $L$-groups.
		\item If $K = \RR$ and $\pi$ is the generic discrete series representation of $Sp(6, K)$ with infinitesimal character $(\alpha, \beta, \gamma)$, then $\Psi(\pi)$ is the unique generic representation of $GL(7, \RR)$ which is cohomological for the irreducible algebraic representation of $GL(7, \RR)$ of highest weight $(\alpha-3, \beta-2, \gamma-1, 0, 1-\gamma, 2-\beta, 3-\alpha)$.
		\item Let $F$ be a number field, and let $\Pi$ be a globally generic cuspidal automorphic representation of $Sp(6, \ad_F)$. Then there exist self-dual, cuspidal automorphic representations $\Psi_1, \dots, \Psi_r$ of $GL(n_i, \ad_F)$, where $\sum_{i=1}^r n_i = 7$, with the following property: let $\Psi = \Psi_1 \boxplus \dots \boxplus \Psi_r$. Then for any place $v$ of $F$, $\Psi_v = \Psi(\Pi_v)$. 
	\end{enumerate}
\end{thm}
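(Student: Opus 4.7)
The plan is to invoke the Langlands functorial lift from $Sp(6)$ to $GL(7)$ constructed in \cite{CKPS} via the converse theorem, defining the local map $\Psi$ place by place. At unramified non-archimedean $K$, I would set $\Psi(\pi)$ to be the unramified representation of $GL(7,K)$ whose Satake parameter is the image of that of $\pi$ under the $L$-group embedding $SO(7,\CC)\hookrightarrow GL(7,\CC)$; this yields (2) by construction. At $K=\RR$, define $\Psi(\pi)$ by composing the $L$-parameter $\phi_\pi:W_\RR\to SO(7,\CC)$ of $\pi$ with the standard embedding and applying archimedean local Langlands for $GL(7)$. For a generic discrete series with Harish-Chandra parameter $(\alpha,\beta,\gamma)$, the resulting $7$-dimensional parameter has infinitesimal character represented by $(\alpha,\beta,\gamma,0,-\gamma,-\beta,-\alpha)$; subtracting the $GL(7)$ half-sum of positive roots $\rho=(3,2,1,0,-1,-2,-3)$ yields precisely the highest weight appearing in (3).

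At the remaining non-archimedean places I would characterize $\Psi(\pi)$ via twisted $\gamma$-factors: namely as the unique generic irreducible admissible representation of $GL(7,K)$ satisfying
\[ \gamma(s,\pi\times\tau,\psi) \;=\; \gamma(s,\Psi(\pi)\times\tau,\psi) \]
for every irreducible generic $\tau$ of $GL(m,K)$ with $m\le 3$, and similarly for $L$- and $\varepsilon$-factors. Existence of such a $\Psi(\pi)$ is the main technical content of \cite{CKPS}: globalize $\pi$ to a globally generic cuspidal $\Pi$ on $Sp(6,\ad_F)$ for some number field $F$ with $F_v=K$; feed the niceness of the twisted Langlands--Shahidi $L$-functions $L(s,\Pi\times\tau)$ (for cuspidal $\tau$ on $GL(m,\ad_F)$, $m\le 3$) into the Cogdell--Piatetski-Shapiro converse theorem to obtain a global lift $\Psi_{\mathrm{glob}}(\Pi)$ on $GL(7,\ad_F)$; then take its local component at $v$. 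Stability of $\gamma$-factors under sufficiently ramified twists isolates the desired local representation and gives independence of the choice of globalization. Uniqueness, and hence the injectivity asserted in (1), then follows from the local converse theorem for $GL(7)$.

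The global assertion (4) is essentially a restatement of the converse theorem output: $\Psi_{\mathrm{glob}}(\Pi)$ is an automorphic representation of $GL(7,\ad_F)$ whose local components coincide with $\Psi(\Pi_v)$ at every place, by construction and by local--global compatibility of $\gamma$-factors. Decomposing $\Psi_{\mathrm{glob}}(\Pi)=\Psi_1\boxplus\cdots\boxplus\Psi_r$ into its isobaric constituents via the Langlands--Jacquet--Shalika classification of the discrete spectrum of $GL(7)$, self-duality of each $\Psi_i$ is forced by the fact that the dual group $Sp(6)^\vee=SO(7)$ factors through the orthogonal group, combined with the standard analysis of poles of symmetric- and exterior-square $L$-functions to exclude non-self-dual cuspidal summands. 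The principal obstacle throughout is the construction at bad non-archimedean places: the stability argument of \cite{CKPS} is delicate, and verifying that the local $\Psi(\pi)$ is independent of the globalization (so that the local and global lifts are mutually compatible at every place) is the crucial technical input that makes the whole scheme consistent.
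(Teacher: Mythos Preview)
Your overall strategy matches the paper's: define the local lift $\Psi(\pi)$ by the agreement of $L$-, $\varepsilon$-, and $\gamma$-factors for pairs, as in \cite[Definition 7.1]{CKPS}, appeal to \cite[Proposition 7.5]{CKPS} for existence, and deduce (ii), (iii), (iv) directly from the construction in \cite{CKPS}. So far so good.

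There is, however, a genuine gap in your argument for (i). You write that ``uniqueness, and hence the injectivity asserted in (1), then follows from the local converse theorem for $GL(7)$.'' These are two different statements. The local converse theorem for $GL(7,K)$ shows that the \emph{target} $\Psi(\pi)$ is uniquely determined by the family of $\gamma$-factor identities; i.e.\ it shows that $\Psi$ is well-defined. Injectivity of $\Psi$ is the converse direction: if $\Psi(\pi)\cong\Psi(\pi')$, then for all generic $\tau$ on $GL(m,K)$ one has
\[
\gamma(s,\pi\times\tau,\psi)=\gamma(s,\Psi(\pi)\times\tau,\psi)=\gamma(s,\Psi(\pi')\times\tau,\psi)=\gamma(s,\pi'\times\tau,\psi),
\]
and one must conclude $\pi\cong\pi'$. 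That deduction requires a local converse theorem for $Sp(6,K)$, not for $GL(7,K)$. The paper invokes precisely this: the local converse theorem for $p$-adic $Sp_{2r}$ proved by Qing Zhang in \cite{Zh}. Without that input, your argument does not establish injectivity.
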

\begin{proof}
 The paper \cite{CKPS} defines the local functorial lift of a generic irreducible admissible representation $\pi$ of $Sp(6, K)$ by agreement of $L$-functions and $\epsilon$-factors of pairs, see \cite[Definition 7.1]{CKPS}. This lift is uniquely characterized by the local converse theorem for $\GL(7, K)$, and 	\cite[Proposition 7.5]{CKPS} asserts that it always exists. This defines the map $\Psi$. The local converse theorem for $p$-adic $Sp(6, K)$, proved in \cite{Zh}, shows that $\Psi$ is injective in this case. The final part of the theorem follows from \cite[Theorem 7.2, Proposition 7.2]{CKPS}.
\end{proof}

%
%Let $W_7(a,b)$ be the irreducible algebraic representation of $GL(7)$ with highest weight $(a+2b,a+b,b,0,-b,-a-b,-a-2b)$.  This is clearly a self-dual representation.  
%\begin{cor}\label{arch7}  With notation as in Theorem \ref{arch}, let $\Psi(a,b)$ denote the functorial transfer of $\Theta(\pi(a,b))$ to $GL(7,\RR)$.  A calculation of infinitesimal chracters shows that $\Psi(a,b)$ is the (unique) tempered cohomological representation of $GL(7,\RR)$ with the property that 
%	$$H^*(\mathfrak{gl}(7),SO(7);\Psi(a,b)\otimes W_7(a,b)) \neq 0.$$
%\end{cor}
%
%
%If $x\omega_1 + y\omega_2$ is the infinitesimal character of $\pi_\infty$, then $\Theta(\pi_\infty)$ has
%infinitesimal character $(x + 2y, x + y, y)$ as representation of $\mathfrak{sp}(6)$.  
\subsection{Definition of $\CL_g'$ and $\CL_g$}

Let $K$ be a non-archimedean local field of characteristic 0. We can now define the promised map $\CL_g' :  \CA_g(G_2, K) \to \CG(GL(7), K)$. If $\pi \in \CA_g(G_2, K)$, then we define $\CL_g'(\pi) = \operatorname{rec}_K( \Psi(\sigma) )^{ss}$, where $\sigma$ is the unique irreducible constituent of $\theta_7(\pi)|_{Sp(6, K)}$ which is generic (with respect to our fixed choice of Whittaker datum). This is independent of the choice of Whittaker datum on $Sp(6)$. The effect of semisimplification here is to remove the nilpotent part of the Weil--Deligne representation  $\operatorname{rec}_K( \Psi(\sigma) )$, necessary because of our convention that $\CG(GL(7), K)$ consists of equivalence classes of semisimple representations of $W_K$.

The next step is to show that when $\pi$ is supercuspidal, $\CL_g'(\pi)$ can be conjugated to take values in $G_2(\CC)$. We will establish this using a global argument.
\begin{prop}\label{prop_image_in_G_2}
	Let $\pi \in \CA^0_g(G_2, K)$. Then $\CL_g'(\pi)$ lies in the image of $r_{7, \ast}$.
\end{prop}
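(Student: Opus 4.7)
The plan is to globalize $\pi$ to a suitable cuspidal automorphic representation $\Pi$ of $G_2(\ad_F)$ over a totally real number field $F$, transfer it to $GL(7)$ using the theta correspondence and the functorial lift of Theorem \ref{thm_Arthur}, attach a semisimple $\ell$-adic Galois representation $R : \Gamma_F \to GL(7, \Qlb)$ to the resulting isobaric representation, and show via Chebotarev density that $R$ factors (up to $GL(7,\Qlb)$-conjugation) through $r_7 : G_2(\Qlb) \hookrightarrow GL(7, \Qlb)$. Local--global compatibility at the globalized place $v_0$ then transfers the conclusion to $\CL_g'(\pi)$.

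First, I would produce a totally real number field $F$ with a finite place $v_0$ such that $F_{v_0} \cong K$, together with a globally generic cuspidal representation $\Pi$ of $G_2(\ad_F)$ with $\Pi_{v_0} \cong \pi$ and with every archimedean component an integrable discrete series of the form $\pi_{a,b}$ with sufficiently regular parameters. Such a globalization can be produced by a Poincar\'e series argument: the supercuspidality of $\pi$ makes the Poincar\'e series automatically cuspidal, archimedean integrable discrete series matrix coefficients pin down $\Pi_\infty$, and global genericity can be arranged by choosing test vectors whose Whittaker coefficient is nonvanishing.

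Applying Theorem \ref{globalGRS}(2) then yields a cuspidal, globally generic automorphic representation $\theta_7(\Pi)$ on $Sp(6, \ad_F)$, and Theorems \ref{thm_Arthur}(3) and \ref{arch}, combined with the regularity of the archimedean infinitesimal character, show that the associated isobaric sum $\Psi = \Psi_1 \boxplus \cdots \boxplus \Psi_r$ on $GL(7, \ad_F)$ is self-dual and regular algebraic. Fixing a prime $\ell$ different from the residue characteristic of $v_0$ and an isomorphism $\iota : \Qlb \to \CC$, the Galois representations attached to the self-dual, cuspidal constituents $\Psi_i$ combine into a semisimple $R : \Gamma_F \to GL(7, \Qlb)$. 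Local--global compatibility of local Langlands for $GL(7)$ at unramified finite places $v$, together with Proposition \ref{unramG2} and Theorem \ref{thm_Arthur}(2), implies that for almost all $v$, the Frobenius class $R(\operatorname{Frob}_v)$ is $GL(7, \Qlb)$-conjugate into $r_7(G_2(\Qlb))$, since the composite $G_2 \hookrightarrow Spin(7) \to SO(7) \hookrightarrow GL(7)$ is exactly $r_7$.

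The main obstacle is to upgrade this pointwise statement to the global claim that the entire image of $R$ lies, after a single $GL(7, \Qlb)$-conjugation, inside $r_7(G_2(\Qlb))$; a priori different Frobenius elements could be conjugated into $G_2$ by different elements of $GL(7)$. This is precisely the invariant-theoretic Chebotarev argument of \cite{Ch}: exploiting that $r_7(G_2)$ is cut out in $GL(7)$ by a finite list of $GL(7)$-invariant tensors and that the traces of $R$ against the corresponding polynomial invariants are controlled on a density-one set of Frobenius classes, one deduces that $R$ itself factors through $r_7(G_2(\Qlb))$ up to conjugation. Restricting $R$ to a decomposition group at $v_0$ and invoking local--global compatibility identifies $R|_{\Gamma_{F_{v_0}}}^{ss}$ with $\iota^{-1}\CL_g'(\pi)$, which therefore lies in the image of $r_{7, \ast}$.
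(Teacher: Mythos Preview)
Your proposal is correct and follows essentially the same route as the paper: globalize $\pi$ to a globally generic cuspidal $\Pi$ with integrable discrete series at infinity, theta-lift to $PSp(6)$ and transfer to $GL(7)$, attach $\ell$-adic Galois representations to the isobaric constituents, and invoke Chenevier's result \cite[Theorem 6.4]{Ch} to force the image into $r_7(G_2)$, concluding by local--global compatibility at $v_0$. The paper additionally fixes an auxiliary finite place $w$ at which $\theta_7(\Pi_w)$ is supercuspidal and cites \cite[Theorem 4.5]{KLS08} for the globalization, but that extra place is not used in the remainder of this particular proof, and your Poincar\'e series argument plays the same role as the cited globalization theorem.
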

\begin{proof}
	Fix a totally real number field $F$ with a finite place $v$ such that $F_v \isoarrow K$.  Let $\alpha_1, \dots, \alpha_r$ be the real places of $F$.  For each $i = 1, \dots, r$, we choose a generic integrable discrete series representation $\pi_i$ of the (split) group $G_2(\RR)$. Fix a finite place $w \neq v$ of $F$ of residue characteristic different from 2 and a generic supercuspidal representation $\pi_w$ of $G_2(F_w)$ with the property that $\theta_7(\pi_w)$ is a supercuspidal representation of $PSp(6,F_w)$. (Such representations are constructed explicitly in the proof of Theorem 5.2 of \cite{KLS10}, at least when $F_w = \QQ_p$, which we assume for convenience of reference.) By \cite[Theorem 4.5]{KLS08} (see also \cite[Theorem 2.2]{V84}), we can find a cuspidal, globally generic automorphic representation $\Pi$ of $G_2(\ad_F)$ such that $\Pi_v \cong \pi$, $\Pi_w \cong \pi_w$, and $\Pi_{\alpha_i} \cong \pi_i$ for each $i$. 
	
	Then $\theta_7(\Pi)$ is globally generic and cohomological, and its local component at $w$ is supercuspidal, by construction. Applying Theorem \ref{thm_Arthur} to the restriction of $\theta_7(\Pi)$ to $Sp(6, \ad_F)$, we can find cuspidal, self-dual automorphic representations $\Psi_1, \dots, \Psi_r$ of $GL(7, \ad_F)$ such that $\Psi(\theta_7(\Pi)) = \Psi_1 \boxplus \dots \boxplus \Psi_r$ is cohomological. In particular, each $\Psi_i$ is cohomological up to twist. Fixing an isomorphism $\iota : \overline{\QQ}_\ell \to \CC$, we get a compatible family $r_\iota(\Psi_i)$ of $n_i$-dimensional representations with values in $GL(n_i,\Qlb)$.  We get a representation
	\begin{equation}\label{GLrep}  r_\iota(\Pi) := r_\iota(\Psi(\theta_7(\Pi))) : \Gamma_F \to GL(7, \overline{\QQ}_\ell).
	\end{equation}
	In fact, \cite[Theorem 6.4]{Ch} shows that the representation $r_\iota(\Pi)$ is conjugate to a representation contained in $r_7(G_2(\overline{\QQ}_\ell))$, and such a representation is unique up to conjugation in $G_2(\overline{\QQ}_\ell)$ (noting that the cuspidality of $\Psi$ plays no role in the proof of that theorem). Our proof is now complete: we have $r_\iota(\Pi)|_{W_{F_v}}^{ss} \cong \iota^{-1} \operatorname{rec}_{F_v}(\Psi(\theta_7(\pi)))^{ss} \cong \iota^{-1} \CL_g'(\pi)$, by definition, and this shows that $\CL_g'(\pi)$ is conjugate to a representation valued in $G_2(\CC)$.
\end{proof}
\begin{prop}\label{prop_irreducibility}
	Let $\pi \in \CA^0_g(G_2,K)$, and let $\rho : W_K \to G_2(\CC)$ be the unique parameter, up to $G_2(\CC)$-conjugacy, with $r_7 \circ \rho = \CL_g'(\pi)$. Then $\rho$ is irreducible. 
\end{prop}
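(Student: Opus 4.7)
My plan is a proof by contradiction. Suppose $\rho$ factors through a proper parabolic subgroup of $G_2$; then, up to $G_2(\CC)$-conjugacy, the image of $\rho$ lies in the Levi $M$ of such a parabolic. The proper Levi subgroups of $G_2$ are, up to conjugacy, the maximal torus $T$ and the two copies of $GL_2$ obtained as the Levis of the two maximal parabolics (one for each simple root). In each of these three cases, the restriction $r_7|_M$ has an explicit non-trivial direct-sum decomposition as an $M$-representation, so $r_7 \circ \rho$ inherits a non-trivial direct-sum decomposition as a $7$-dimensional representation of $W_K$.

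Since by definition $r_7 \circ \rho = \operatorname{rec}_K(\Psi(\sigma))^{ss}$, where $\sigma$ is the unique generic constituent of $\theta_7(\pi)|_{Sp(6,K)}$, this reducibility translates via the local Langlands correspondence for $GL(7, K)$ into a proper isobaric decomposition $\Psi(\sigma) = \tau_1 \boxplus \cdots \boxplus \tau_s$ with $s \geq 2$ and each $\tau_j$ a cuspidal representation of some $GL(n_j, K)$. Combining the characterization of $\Psi$ via equality of twisted $L$- and $\epsilon$-factors of pairs (\cite[Proposition 7.5]{CKPS}) with the injectivity of $\Psi$ from Theorem \ref{thm_Arthur}, this forces $\sigma$ to be the unique generic subquotient of a parabolic induction from a proper Levi of $Sp(6, K)$ whose shape is dictated by the decomposition of $r_7|_M$. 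In particular $\sigma$, and therefore $\theta_7(\pi)$, is not supercuspidal.

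The main obstacle is then to contradict this using the supercuspidality of $\pi$. The crucial local input is that for any generic supercuspidal representation $\pi$ of $G_2(K)$, the theta lift $\theta_7(\pi)$ is itself a supercuspidal representation of $PSp(6, K)$. This is a non-trivial property of the exceptional theta correspondence for $(G_2, PSp(6)) \subset E_7$, which follows from the first-occurrence dichotomy involving also the smaller dual pair $(G_2, PGL(3)) \subset E_6$, as established in \cite{GS04}, Savin's appendix, and \cite{GS21}. The case $\theta_6(\pi) \neq 0$, in which the dichotomy alone does not rule out a non-supercuspidal $\theta_7(\pi)$, can be handled by a supplementary global argument: globalize $\pi$ as in the proof of Proposition \ref{prop_image_in_G_2} to a cuspidal generic representation $\Pi$ on $G_2(\ad_F)$ with $\Pi_v = \pi$ and $\Pi_\infty$ in the integrable discrete series, and invoke Theorem \ref{globalGRS} to obtain $\Theta_6(\Pi) = 0$, which combined with the local theory suffices to rule out $\theta_6(\pi) \neq 0$. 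Once the local supercuspidality of $\theta_7(\pi)$ is granted, the desired contradiction is immediate, completing the proof.
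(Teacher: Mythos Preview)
Your proof has a genuine gap at its core: the claim that $\theta_7(\pi)$ is supercuspidal for every generic supercuspidal $\pi$ of $G_2(K)$ is false. The Savin--Weissman dichotomy \cite{SWe} says precisely that there are two mutually exclusive possibilities for such $\pi$: either $\theta_6(\pi)=0$ and $\theta_7(\pi)$ is supercuspidal, or $\theta_6(\pi)\neq 0$ and $\theta_7(\pi)$ is a non-supercuspidal subquotient of an induction from a supercuspidal representation of $PGL(3,K)$ via the Siegel parabolic. Both cases genuinely occur; this is the content of \cite[Theorem 3.9]{SWe} and is reflected in Proposition~\ref{prop_supercusp} of the paper. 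Your proposed rescue --- globalizing $\pi$ to $\Pi$ with $\Pi_\infty$ discrete series and invoking $\Theta_6(\Pi)=0$ --- does not help: vanishing of the global theta lift in no way forces vanishing of a local theta lift at a chosen place. In fact the global vanishing here comes entirely from the archimedean place and says nothing about $\theta_6(\pi_v)$ at finite $v$.

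The paper proceeds instead by treating the two cases of the dichotomy separately and showing $\rho$ is $G_2$-irreducible in each. When $\theta_7(\pi)$ is supercuspidal, \cite[Theorem 7.2]{CKPS} gives $r_7\circ\rho\cong\bigoplus_i\operatorname{rec}_K(\psi_i)$ with the $\psi_i$ distinct self-dual supercuspidals; such a parameter lands uniquely in $SO(7,\CC)$ with finite centralizer there, hence is $SO(7)$-irreducible and a fortiori $G_2$-irreducible. When $\theta_7(\pi)$ is not supercuspidal, \cite[Proposition 3.6]{SWe} and \cite[Proposition 7.4]{CKPS} give the explicit shape $r_7\circ\rho\cong\operatorname{rec}_K(\tau)\oplus\CC\oplus\operatorname{rec}_K(\tau)^\vee$ with $\tau$ an irreducible $3$-dimensional parameter, and \cite[Proposition 1.5]{SWe} shows directly that no such parameter factors through a proper Levi of $G_2$. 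Thus the argument is not a contradiction at all, but a direct verification in each branch of the dichotomy.
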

\begin{proof}
	We split up into cases according to the dichotomy described in \cite{SWe}. If $\theta_7(\pi)$ is supercuspidal then by \cite[Theorem  7.2]{CKPS} there are self-dual, non-isomorphic supercuspidal representations $\psi_1, \dots, \psi_r$ of $GL(n_i, K)$ with $\sum_{i=1}^r n_i = 7$ and $r_7 \circ \rho$ conjugate in $GL(7, \CC)$ to $\oplus_{i=1}^r \operatorname{rec}_K(\psi_i)$. There is a unique way to conjugate such a parameter into $SO(7, \CC)$, and it is irreducible there (indeed, its centralizer is finite). It follows that $\rho$ must also be irreducible in this case.
	
	If $\theta_7(\pi)$ is not supercuspidal, then \cite[Proposition 3.6]{SWe} shows that $\theta_7(\pi)$ is a subquotient of an unnormalized induction $\operatorname{Ind}_{Q_3}^{PSp(6)} \rho \otimes | \det |$, where $\rho$ is a supercuspidal representation of $PGL(3, K)$. By \cite[Proposition 7.4]{CKPS}, $r_7 \circ \rho$ is conjugate in $GL(7, \CC)$ to $\operatorname{rec}_K(\rho) \oplus \CC \oplus \operatorname{rec}_K(\rho)^\vee$. Since $\operatorname{rec}_K(\rho)$ is irreducible and 3-dimensional, \cite[Proposition 1.5]{SWe} shows that the image of this representation is not contained in the image of any Levi subgroup of $G_2$, so once again $\rho$ must be irreducible.
\end{proof}
We may therefore complete our definition of $\CL_g$ as follows: if $\pi \in  \CA^0_g(G_2,K)$ then $\CL_g(\pi)$ is the unique parameter, up to conjugacy, with $r_7 \circ \CL_g(\pi) = \CL_g'(\pi) $. We record the following useful property of $\CL_g$.
\begin{prop}\label{prop_supercusp}
	Let $\pi \in \CA^0_g(G_2,K)$. and let $R : G_2 \to SO(7)$ be the unique non-trivial homomorphism. Then $\theta_7(\pi)$ is supercuspidal if and only if $R \circ \CL_g(\pi)$ is $SO(7,\CC)$-irreducible.
\end{prop}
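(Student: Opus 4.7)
The plan is to exploit the dichotomy set up in the proof of Proposition \ref{prop_irreducibility}, splitting according to whether $\theta_7(\pi)$ is supercuspidal and examining $SO(7,\CC)$-irreducibility in each case. I will use the standard criterion that a semisimple parameter $\rho : W_K \to SO(7,\CC)$ is $SO(7,\CC)$-irreducible if and only if the underlying $7$-dimensional representation admits no nontrivial totally isotropic $W_K$-stable subspace, equivalently, if and only if the image of $\rho$ lies in no proper parabolic subgroup of $SO(7)$.

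First, suppose that $\theta_7(\pi)$ is supercuspidal. As recalled in the proof of Proposition \ref{prop_irreducibility}, \cite[Theorem 7.2]{CKPS} then gives $r_7 \circ \CL_g(\pi) \cong \bigoplus_{i=1}^r \operatorname{rec}_K(\psi_i)$ for pairwise non-isomorphic self-dual supercuspidal representations $\psi_i$ of $GL(n_i,K)$ with $\sum n_i = 7$. Since this sum is realized inside $SO(7,\CC)$, Schur's lemma applied to the invariant symmetric form shows that non-isomorphic irreducible summands are paired to zero; non-degeneracy of the ambient form then forces the form to restrict non-degenerately to each $\operatorname{rec}_K(\psi_i)$, which must therefore be orthogonally self-dual. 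Any $W_K$-stable subspace is then a direct sum of a subset of these summands and inherits a non-degenerate form, so no nontrivial totally isotropic $W_K$-stable subspace can exist, and $R \circ \CL_g(\pi)$ is $SO(7,\CC)$-irreducible.

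In the remaining case $\theta_7(\pi)$ is not supercuspidal, and the proof of Proposition \ref{prop_irreducibility} supplies, via \cite[Proposition 3.6]{SWe} and \cite[Proposition 7.4]{CKPS}, a supercuspidal $\rho$ of $PGL(3,K)$ with $r_7 \circ \CL_g(\pi) \cong \operatorname{rec}_K(\rho) \oplus \CC \oplus \operatorname{rec}_K(\rho)^\vee$, where $\operatorname{rec}_K(\rho)$ is a $3$-dimensional irreducible Weil parameter. In any $SO(7,\CC)$-realization the duality pairing between $\operatorname{rec}_K(\rho)$ and $\operatorname{rec}_K(\rho)^\vee$ exhibits the first $3$-dimensional summand as a $W_K$-stable totally isotropic subspace, so $R \circ \CL_g(\pi)$ factors through the Siegel parabolic of $SO(7)$ with Levi $GL(3)$ stabilizing this subspace, and hence is $SO(7,\CC)$-reducible.

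The only subtlety I anticipate is the Schur-theoretic bookkeeping in the supercuspidal case, where one must rule out that some $\operatorname{rec}_K(\psi_i)$ is symplectically rather than orthogonally self-dual. If such a summand carried only an alternating invariant form then the restriction of the ambient symmetric form to it would vanish, and combined with the vanishing of the cross-pairings with the other summands this would place the summand in the radical of the ambient form, contradicting non-degeneracy. With this minor point in hand, the two directions of the claimed equivalence follow immediately from the two cases above.
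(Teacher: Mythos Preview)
Your proposal is correct and follows essentially the same approach as the paper, which simply records that the statement is a corollary of the proof of Proposition~\ref{prop_irreducibility}. You have unpacked that corollary explicitly: in the supercuspidal case the multiplicity-free orthogonal decomposition forces each summand to carry a non-degenerate symmetric form (your symplectic-versus-orthogonal remark is exactly the point needed here), so no invariant isotropic subspace exists; in the non-supercuspidal case the $3+1+3$ shape visibly produces an invariant isotropic $3$-plane, placing the parameter in the Siegel parabolic of $SO(7)$.
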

\begin{proof}
	This is a corollary of the proof of Proposition \ref{prop_irreducibility}.
\end{proof}

\section{Globalization of local $G_2$ parameters}\label{GaloisLiftings}

Let $G = G_2$ be a split reductive group over $\ZZ$ of that type, let $\mathfrak{g}$ denote its Lie algebra, and let $r_7 : G \to GL(7)$ denote the standard 7-dimensional representation. We fix a split maximal torus and Borel subgroup $T \subset B \subset G$. We may assume that $r_7(T)$ is diagonal and $r_7(B)$ is contained in the upper-triangular Borel subgroup of $GL(7)$. Let $\Delta \subset \Phi = \Phi(G, T)$ denote the corresponding root basis, and $\Phi = \Phi^+ \sqcup \Phi^-$ the sets of positive and negative roots. We label $\Delta = \{ \alpha_1, \alpha_2 \}$ so that the fundamental weight $\omega_1$ is the highest weight of $r_7$. Let $\check{\omega}_1, \check{\omega}_2 \in X_\ast(T)$ denote the corresponding fundamental coweights, and let $\check{\delta} = \check{\omega}_1 + \check{\omega}_2$.

Let $K$ be finite extension of $\QQ_p$, and let $k$ be a finite field of characteristic $\ell \neq p$. We suppose given a continuous representation $\overline{\rho} : \Gamma_K \to G(k)$ such that $\overline{\rho}(I_K)$ has order prime to $\ell$.
\begin{thm}\label{approx}
	There exists a totally real field $F$ and a continuous representation $\overline{\sigma} : \Gamma_F \to G(k)$ with the following properties:
	\begin{enumerate}
		\item $\overline{\sigma}(\Gamma_F) = G(k)$.
		\item $\ell$ splits in $F$. For each place $v | \ell$ of $F$, $\overline{\sigma}|_{\Gamma_{F_v}}$ is $G(k)$-conjugate to $\check{\delta}^2 \circ \epsilon$.
		\item For each place $v | p$ of $F$, there is an isomorphism $F_v \cong K$ such that $\overline{\sigma}|_{\Gamma_{F_v}}$ is $G(k)$-conjugate to $\overline{\rho}$.
		\item If $v$ is a finite place of $F$ such that $v \nmid \ell p$, then $\overline{\sigma}|_{\Gamma_{F_v}}$ is unramified.
		\item For each place $v | \infty$ of $F$ \textup{(}which determines a complex conjugation $c_v \in \Gamma_{F_v}$\textup{)}, $\overline{\sigma}(c_v)$ is $G(k)$-conjugate to $\check{\delta}(-1)$.
	\end{enumerate}
\end{thm}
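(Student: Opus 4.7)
The plan is to apply the Moret--Bailly method to a moduli space of $G(k)$-Galois covers, as in \cite{KW, BHKT}. We construct a smooth, geometrically integral variety $X$ over a number field $E$ whose rational points over totally real extensions $F/E$ parametrize continuous homomorphisms $\Gamma_F \to G(k)$ with the prescribed local structure, and then invoke Moret--Bailly to produce a totally real $F$ with the desired local completions together with an $F$-point of $X$ approximating chosen local points over each completion.

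Concretely, I would take $X$ to be (a component of) a Hurwitz-type space parametrizing $G(k)$-Galois \'etale covers of an open subscheme $U \subset \mathbb{P}^1_E$ obtained by removing a finite set of punctures, together with conjugacy-class data at each puncture. One puncture is marked with the conjugacy class of the inertial image of $\overline{\rho}$; another is marked with the conjugacy class arising from $\check{\delta}^2 \circ \epsilon$ at $\ell$; and the real structure at the archimedean places is imposed by marking the complex conjugation action with the conjugacy class of $\check{\delta}(-1)$. A rigid generating tuple of conjugacy classes for $G_2(k)$, whose existence can be deduced from inverse Galois theory for finite groups of Lie type, ensures surjectivity of monodromy, giving property (i).

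Next, I would verify that $X$ is smooth and geometrically integral, and that it has local points at each relevant completion: at the chosen $p$-adic place, $\overline{\rho}$ itself furnishes a $K$-point (this uses the hypothesis that $\overline{\rho}(I_K)$ has order prime to $\ell$, ensuring the local inertia data are compatible with the \'etale Hurwitz setup in residue characteristic $\ell$); at $\ell$, the cyclotomic character $\check{\delta}^2 \circ \epsilon$ furnishes a $\mathbb{Q}_\ell$-point; at real places, $\check{\delta}(-1)$ furnishes an $\mathbb{R}$-point. Applying Moret--Bailly's theorem to this local data yields a totally real $F$ and an $F$-point of $X$, hence the required $\overline{\sigma}$; unramifiedness outside $\ell p$ (property (iv)) is automatic since the branch locus is chosen to lie above $\ell p$ and $\infty$.

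The main obstacle is setting up $X$ itself and verifying the existence of enough local points while preserving geometric integrality --- specifically, choosing the punctures and conjugacy-class data so that all five local conditions can be simultaneously imposed. In practice this may require adding auxiliary punctures at unramified places (which one then arranges to lie above $\ell p$ in order to preserve (iv)), so the tuple-based rigidity argument for $G_2(k)$ must be flexible enough to accommodate the prescribed local constraints.
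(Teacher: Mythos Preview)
Your high-level strategy --- invoke a Moret--Bailly result --- matches the paper. But the paper's proof is two lines: use weak approximation to find a totally real field $E$ with $\ell$ split and $E_v \cong K$ for each $v \mid p$, then cite the main theorem of \cite{M90} directly to produce the extension $F/E$ and the surjection $\overline{\sigma}: \Gamma_F \to G(k)$ with the prescribed local behaviour at the places above $p$, $\ell$, and $\infty$, unramified elsewhere. No Hurwitz space, no rigidity, no hand-built moduli problem.

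Your concrete implementation has a genuine conceptual slip: punctures on $\mathbb{P}^1_E$ are geometric points, not primes of the base field $E$ or $F$. A Hurwitz space parametrizes $G(k)$-covers of an open curve $U \subset \mathbb{P}^1$, i.e.\ representations of $\pi_1(U)$, and the inertia data at punctures describe ramification of the cover along the curve, not ramification of a number-field extension at $p$, $\ell$, or $\infty$. So the sentence ``one puncture is marked with the conjugacy class of the inertial image of $\overline{\rho}$'' does not encode the condition $\overline{\sigma}|_{\Gamma_{F_v}} \sim \overline{\rho}$, and ``the branch locus lies above $\ell p$ and $\infty$'' does not give you unramifiedness of $\overline{\sigma}$ away from $\ell p$. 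To pass from a cover of a curve to a representation of $\Gamma_F$ one must specialize at an $F$-point of $U$, and then the arithmetic ramification is governed by the reduction of that point and of the branch divisor, which is a different (and more delicate) bookkeeping than what you describe.

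Separately, the appeal to a ``rigid generating tuple for $G_2(k)$'' is both unnecessary and risky: rigidity with your particular prescribed local classes (the inertial image of $\overline{\rho}$, the class of $\check{\delta}^2(\epsilon)$, etc.) is not something one can expect to hold, and Moret--Bailly's theorem requires no such hypothesis. The surjectivity $\overline{\sigma}(\Gamma_F) = G(k)$ and the geometric irreducibility input in \cite{M90} come from a different source (one constructs a suitable $G(k)$-torsor over an auxiliary variety and controls its connected components). The cleanest fix is simply to quote \cite{M90}, as the paper does.
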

\begin{proof}
	By weak approximation, we can find a totally real number field $E / \QQ$ in which $\ell$ splits, and such that for each $v | p$ there is an isomorphism $E_v \cong K$. The existence of an extension $F / E$ and a homomorphism $\overline{\sigma} : \Gamma_F \to G(k)$ with the claimed properties then follows from the main theorem of \cite{M90}.
\end{proof}
\begin{thm}\label{lifting}
	Let $\overline{\sigma} : \Gamma_F \to G(k)$ be a representation satisfying the conclusion of Theorem \ref{approx}, and suppose that $\ell > 28$. Then there exists a finite extension $E / \operatorname{Frac} W(k)$ with ring of integers $\cO$ and a lift
	\[ \sigma : \Gamma_F \to G(\cO) \]
	of $\overline{\sigma}$ with the following properties:
	\begin{enumerate}
		\item $\sigma(\Gamma_F)$ contains a conjugate of $G(\ZZ_\ell)$.
		\item For each place $v | \ell$ of $F$, there exists $g \in G(\cO)$ such that $g \sigma|_{\Gamma_{F_v}} g^{-1}$ takes values in $B(\cO)$, and the projection of $g \sigma|_{I_{F_v}} g^{-1}$ to $T(\cO)$ equals $\check{\delta}^2 \circ \epsilon$. Consequently, $r_7 \circ \sigma|_{\Gamma_{F_v}}$ is crystalline ordinary of Hodge--Tate weights $\{ 6, 4, 2, 0, -2, -4, -6 \}$ (with respect to any embedding $F_v \hookrightarrow \overline{\QQ}_\ell$).
		\item  For each place $v | p$ of $F$, reduction modulo $\mathfrak{m}_\mathcal{O}$ induces an isomorphism $\sigma(I_{F_{v}}) \overset{\sim}{\to} \overline{\sigma}(I_{F_{v}})$.
		\item For each finite place $v \nmid \ell p$ of $F$, $\sigma|_{\Gamma_{F_v}}$ is unramified.
		\item For each place $v | \infty$ of $F$, $\sigma(c_v)$ is $G(\cO)$-conjugate to $\check{\delta}(-1)$.
	\end{enumerate}
\end{thm}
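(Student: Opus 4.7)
The plan is to apply the Ramakrishna/Khare--Wintenberger deformation-theoretic lifting machinery for $G = G_2$. First I set up a global deformation problem for $\overline{\sigma}$: at each $v \mid \ell$, the local condition is the $B$-ordinary deformation condition, keeping the torus piece on inertia equal to $\check{\delta}^2 \circ \epsilon$; at each $v \mid p$, the local condition is the ``$\overline{\sigma}(I_{F_v})$-preserving'' (minimally ramified) condition, which is available because $\overline{\sigma}(I_{F_v})$ has order prime to $\ell$ and is therefore rigid; at each archimedean $v$, the condition is that $\sigma(c_v)$ remain $G(\mathcal{O})$-conjugate to $\check{\delta}(-1)$; and at finite $v \nmid \ell p$, the condition is unramifiedness. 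All of this is to be supplemented by a finite set of auxiliary Ramakrishna primes to be chosen later.

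The next step is to verify that each of these local deformation functors is representable by a smooth local lifting ring of the expected dimension. The $B$-ordinary condition for $G_2$ at $v \mid \ell$ can be shown smooth by the explicit torus/unipotent description (this is standard; compare Patrikis and Fakhruddin--Khare--Patrikis for general reductive $G$); the fact that $F_v = \QQ_\ell$ and the inertial character $\check{\delta}^2 \circ \epsilon$ is generic (automatic once $\ell > 28$) guarantees that the resulting deformation functor cuts out the correct crystalline ordinary locus with Hodge--Tate weights $\{6,4,2,0,-2,-4,-6\}$. The minimally ramified condition at $v \mid p$ is representable and smooth by the tame/prime-to-$\ell$ nature of $\overline{\sigma}(I_{F_v})$. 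The archimedean condition is smooth because $\check{\delta}(-1)$ is an involution whose centralizer in $G$ is a Levi (in fact, the adjoint centralizer has the expected $-1$-eigenspace dimension in $\mathfrak{g}$).

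The core of the proof is the global Ramakrishna step. The obstruction to producing a characteristic zero lift satisfying all these local conditions lies in the dual Selmer group $H^1_{\mathcal{L}^\perp}(\Gamma_{F,S}, \mathfrak{g}(1))$. Using the hypothesis $\ell > 28$, which via Theorem \ref{approx}(i) ensures that $\overline{\sigma}(\Gamma_F) = G_2(k)$ acts adequately on $\mathfrak{g}$ (so that $H^1(G_2(k), \mathfrak{g}) = H^1(G_2(k), \mathfrak{g}(1)) = 0$ and $\mathfrak{g}$ has the separation-of-cohomology property), one finds, for every nontrivial class in the dual Selmer group, an auxiliary finite place $v_0$ of $F$ together with a ``Ramakrishna'' local deformation condition (relaxing unramifiedness to allow a specific tame monodromy along a regular semisimple element) that kills this class while preserving the dimension count. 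Iterating, one obtains a finite enlargement of $S$ and a modified deformation problem whose dual Selmer group vanishes; a standard Poitou--Tate/Wiles-style dimension count then shows that the resulting global deformation ring is formally smooth over $W(k)$ of dimension equal to the number of archimedean places (so, positive), producing a lift $\sigma : \Gamma_F \to G(\mathcal{O})$ over a sufficiently large $\mathcal{O}$ satisfying conditions (ii)--(v).

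The last point is the big image assertion (i). Because $\overline{\sigma}$ is already surjective onto $G_2(k)$ and $\ell > 28$, a standard argument with the Lie algebra (e.g.\ Pink's theorem on $\ell$-adic subgroups, or a direct application of the fact that the only closed subgroups of $G_2(\ZZ_\ell)$ that surject onto $G_2(\FF_\ell)$ and contain a regular ordinary element are the full $G_2(\ZZ_\ell)$ up to conjugation) shows that $\sigma(\Gamma_F)$ contains a conjugate of $G_2(\ZZ_\ell)$. The main obstacle I anticipate is the Ramakrishna step: verifying that enough Ramakrishna primes exist for $G_2$ requires a careful cohomological/Chebotarev argument using the adequacy of $G_2(k) \curvearrowright \mathfrak{g}$, and it is precisely this that forces the lower bound $\ell > 28$ on the residue characteristic.
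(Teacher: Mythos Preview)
Your proposal follows the Ramakrishna strategy: enlarge the ramification set by auxiliary primes carrying a tame local condition until the dual Selmer group vanishes, and then read off a formally smooth deformation ring. The difficulty is that the lift you produce is ramified at those auxiliary primes, in direct conflict with condition (iv) of the theorem. You set up the deformation problem with unramifiedness at all finite $v\nmid \ell p$, and then immediately propose to relax this at finitely many places; the lift you obtain lives in the enlarged deformation ring, and there is no mechanism to force it back into the original one. There is also no reason to expect the dual Selmer group for the unenlarged problem to vanish on its own, so the ``iterate until zero'' step cannot simply be skipped.

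The paper takes a genuinely different route, the Khare--Wintenberger method. One sets up the deformation ring $R_G$ with exactly the local conditions in (ii)--(iv) and \emph{no} auxiliary primes. Patrikis's results give a presentation of $R_G$ as a quotient of $W(k)[[X_1,\dots,X_g]]$ by $g$ relations; the point is then to prove that $R_G$ is \emph{finite} over $W(k)$, which forces it to be a finite flat complete intersection and hence to admit $\cO$-points for some finite extension $\cO/W(k)$. Finiteness is obtained by composing with $r_7$ to map $R_G$ to a $\mathcal{G}_7$-valued deformation ring $R_{F'}$ over a well-chosen totally real $F'/F$, and then invoking an automorphy lifting theorem \cite[Theorem 10.2]{T} together with potential automorphy \cite[Proposition 3.3.1]{BGGT} to conclude that $R_{F'}$ is finite. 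This automorphic input is what replaces your auxiliary primes and is precisely what allows the lift to be unramified outside $\ell p$. (For (i), the paper simply cites \cite[Proposition 6.7]{BHKT}, which is in the spirit of what you sketch.)
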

\begin{proof} 
	We first remark that our hypotheses imply that $G(k)$ is its own derived group, hence $\overline{\sigma}(\Gamma_{F(\zeta_\ell)}) = G(k)$, and $r_7 \circ \overline{\sigma}$ is absolutely irreducible (see \cite{St}). This will be useful in applying the results of \cite{BGGT} cited below. We next observe that \cite[Proposition 6.7]{BHKT} shows that any lift $\sigma$ of $\overline{\sigma}$ to $G(\cO)$ has the property that $\sigma(\Gamma_F)$ contains a conjugate of $G(\ZZ_\ell)$. 
	
	To construct a lift, we use the Khare--Wintenberger method (cf. \cite[Theorem 3.7]{KW}). Let $\mathcal{C}$ denote the category of complete local Noetherian $W(k)$-algebras with residue field $k$, and let $\operatorname{Def}_G : \mathcal{C} \to \operatorname{Sets}$ denote the functor which assigns to any $A \in \mathcal{C}$ the set of $\ker(G(A) \to G(k))$-conjugacy classes of homomorphisms $\sigma_A : \Gamma_F \to G(A)$ satisfying the following conditions:
	\begin{itemize}
		\item For each place $v | \ell$ of $F$, there exists $g \in G(A)$ such that $g \sigma_A|_{\Gamma_{F_v}} g^{-1}$ takes values in $B(A)$, and the projection of $g \sigma_A|_{I_{F_v}} g^{-1}$ to $T(A)$ equals $\check{\delta}^2 \circ \epsilon$.
		\item For each place $v | p$ of $F$, reduction modulo $\mathfrak{m}_A$ induces an isomorphism $\sigma_A(I_{F_{v}}) \overset{\sim}{\to} \overline{\sigma}(I_{F_{v}})$.
		\item For each finite place $v \nmid \ell p$ of $F$, $\sigma_A|_{\Gamma_{F_v}}$ is unramified.
	\end{itemize}
	Then (\cite[Proposition 9.2]{Pat16}) $\operatorname{Def}_G$ is represented by an object $R_G \in \mathcal{C}$, and there exists an integer $g \geq 0$ such that $R_G$ can be expressed as a quotient of $W(k)[[X_1, \dots, X_g]]$ by $g$ relations. (Note that the local conditions are liftable local deformation conditions, in the sense of \cite{Pat16}, which are analyzed in \cite[\S 4.1]{Pat16} and \cite[\S 4.4]{Pat16}. The conditions $\ell > 28$ and $\ell$ split in $F$ imply that conditions (REG) and (REG*) of  \cite[\S 4.1]{Pat16} hold.) To apply the Khare--Wintenberger method, we must show that $R_G$ is a finite $W(k)$-algebra. This will imply that $R_G$ is a finite flat complete intersection $W(k)$-algebra, and therefore that there exists a finite extension $W(k) \to \cO$ and a homomorphism $R_G \to \cO$, corresponding to a lift $\sigma : \Gamma_F \to G(\cO)$ with the desired properties.
	
	To prove the finiteness of $R_G$, we must compare it with another deformation ring. Let $E / F$ be a totally imaginary quadratic extension in which the places of $F$ above $\ell$ and $p$ split; then $E$ is a CM field. Let $\overline{\tau} = r_7 \circ \overline{\sigma}|_{\Gamma_E}$. Then $\overline{\tau}$ is absolutely irreducible and $\overline{\tau}^c \cong \overline{\tau}^\vee$ (where $c \in \operatorname{Gal}(E / F)$ is the non-trivial element). Let $\mathcal{G}_7 = (GL(7) \times GL(1)) \rtimes \{ \pm 1 \}$ denote the group scheme defined in \cite[\S 1.1]{BGGT}. It is equipped with a homomorphism $\nu : \mathcal{G}_7 \to GL(1)$ which is projection to $GL(1)$ on the connected component and which sends $-1$ to $-1$. As described there, $\overline{\tau}$ extends to a homomorphism $\overline{\tau} : \Gamma_F \to \mathcal{G}_7(k)$ such that $\nu \circ \overline{\tau} = \delta_{E/F}$ and $\overline{\tau}(\Gamma_E) \subset GL(7, k)$. (We write $\delta_{E/F} : \operatorname{Gal}(E/F) \to \{ \pm 1 \}$ for the unique non-trivial character.)
	
	Let $S$ denote the set of places of $F$ dividing $\ell p$. Fix for each $v \in S$ a place $\wv$ of $E$ lying above $v$. Say that a finite totally real extension $F' / F$ is good if $\overline{\tau}(\Gamma_{F'}) = \overline{\tau}(\Gamma_F)$, as subgroups of $\mathcal{G}_7(k)$, and $\zeta_\ell \not\in E' = E F'$. If $F'$ is a good extension then we write $S_{F'}$ for the set of places of $F'$ lying above a place of $S$, $E' = E F'$, and $\widetilde{S}_{F'}$ for the set of places of $E'$ lying above a place $\wv$ (some $v \in S$). If $v \in S_{F'}$ then we write $\wv$ for the unique place of $\widetilde{S}_{F'}$ lying above $v$. We then write $\operatorname{Def}_{F'} : \mathcal{C} \to \operatorname{Sets}$ for the functor which assigns to any $A \in \mathcal{C}$ the set of $\ker(GL(7, A) \to GL(7, k))$-conjugacy classes of homomorphisms $\tau_A : \Gamma_E \to \mathcal{G}_7(A)$ satisfying the following conditions:
	\begin{itemize}
		\item For each place $v | \ell$ of $F'$, $\tau_A|_{\Gamma_{E'_\wv}}$ defines an $A$-point of the `crystalline-ordinary' lifting ring $R_{\overline{\sigma}|_{\Gamma_{E'_\wv}},\{ H_i \}, cr-ord}^{\square}$ described in \cite[\S 1.4]{BGGT}, with sets 
		\[ H_i = \{ 6, 4, 2, 0, -2, -4, -6 \} \]
		 of Hodge--Tate numbers ($i : E'_\wv \to \overline{\QQ}_\ell$ any embedding).
		\item For each place $v | p$ of $F'$, reduction modulo $\ell$ induces an isomorphism $\tau_A(I_{E'_{\wv}}) \to \overline{\tau}(I_{E'_{\wv}})$.
		\item For each finite place $v \nmid \ell p$ of $F'$, $\tau_A|_{\Gamma_{F'_v}}$ is unramified.
		\item $\nu \circ \tau_A = \delta_{E'/F'}$.
	\end{itemize}
	The functor $\operatorname{Def}_{F'}$ is represented by an object $R_{F'} \in \mathcal{C}$ (see \cite[Proposition 2.2.9]{CHT}). We claim that the representation $r_7$ determines a natural map $R_{F'} \to R_{G}$. The only point to check here is that if $\sigma_A : \Gamma_F \to G(A)$ arises from a homomorphism $R_G \to A$, then for any place $v | \ell$ of $F'$, $r_7 \circ \sigma_A|_{\Gamma_{E'_\wv}}$ defines a point of the `cr-ord' lifting ring. This can be reduced to the universal case, meaning we must show that the classifying morphism 
	\begin{equation}\label{eqn_def_rings} R_{r_7 \circ \overline{\sigma}|_{\Gamma_{E'_\wv}}}^\square \to R_{\overline{\sigma}|_{\Gamma_{F'_v}}}^{\square, \check{\delta}^2 \circ \epsilon} 
	\end{equation}
	 associated to $r_7$ (where the first ring is the universal lifting ring for the $GL(7)$-representation $r_7 \circ \overline{\sigma}|_{\Gamma_{E'_\wv}}$, as in \cite[\S 1.2]{BGGT}, and the second ring is the one defined in \cite[Definition 4.1]{Pat16}) factors through the quotient $R_{r_7 \circ \overline{\sigma}|_{\Gamma_{E'_\wv}}}^\square \to R_{\overline{\sigma}|_{\Gamma_{E'_\wv}},\{ H_i \}, cr-ord}^{\square}$. The ring $R_{\overline{\sigma}|_{\Gamma_{F'_v}}}^{\square, \check{\delta}^2 \circ \epsilon}$ is formally smooth over $\cO$, by \cite[Proposition 4.4]{Pat16}, so it suffices to show that the morphism (\ref{eqn_def_rings}) sends the $\overline{\mathbb{Q}}_\ell$-points of $\operatorname{Spec} R_{\overline{\sigma}|_{\Gamma_{F'_v}}}^{\square, \check{\delta}^2 \circ \epsilon} $ into $\operatorname{Spec} R_{\overline{\sigma}|_{\Gamma_{E'_\wv}},\{ H_i \}, cr-ord}^{\square}$. This follows from \cite[Lemma 3.9]{G}.
	 
	  If $F = F'$, the map $R_F \to R_G$ is surjective (same proof as \cite[Lemma 5.7]{BHKT}). For any $F'$, the map $R_{F'} \to R_F \to R_G$ therefore a finite algebra homomorphism \cite[Lemma 1.2.3]{BGGT}. To finish the proof of the theorem, we therefore just need to show there exists a good extension $F' / F$ such that $R_{F'}$ is a finite $W(k)$-algebra.
	
	By \cite[Theorem 10.2]{T}, this will follow if we can find a good extension $F' / F$ and an isomorphism $\iota : \overline{\QQ}_\ell \to \CC$ such that $\overline{\tau}|_{\Gamma_E}$ is the residual representation of an $\iota$-ordinary, cuspidal, regular algebraic, polarizable automorphic representation of $GL(7, \ad_{E'})$. The existence of such a representation follows from \cite[Proposition 3.3.1]{BGGT}. This completes the proof.
\end{proof}
\begin{thm}\label{potaut} Let $\sigma:  \Gamma_F \ra G(\cO)$ be as in Theorem \ref{lifting}. Let
	$$\sigma_7 = r_7 \circ \sigma:  \Gamma_F \ra GL(7,\cO) \hookrightarrow GL(7,\Qlb).$$
	Then there exists a totally real Galois extension $F ' / F$, a cuspidal, cohomological automorphic representation $\Psi(\sigma)$ of $GL(7, \mathbb{A}_{F'})$, and an isomorphism $\overline{\QQ}_\ell \to \CC$ such that $r_\iota(\Psi(\sigma)) \cong \sigma_7|_{\Gamma_{F'}}$. Moreover, we can assume that $\Psi(\sigma)$ is everywhere unramified.
\end{thm}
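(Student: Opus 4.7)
The plan is to apply a potential automorphy theorem for essentially self-dual $\ell$-adic Galois representations of totally real fields, in the style of \cite{BGGT}, to the seven-dimensional representation $\sigma_7 = r_7 \circ \sigma$. I first verify the standing hypotheses, then invoke the theorem, and finally enlarge the resulting extension to force $\Psi(\sigma)$ to be everywhere unramified.

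The hypotheses line up as follows. Because $r_7$ factors through $SO(7)$, $\sigma_7$ is self-dual and extends canonically to a homomorphism $\Gamma_F \to \mathcal{G}_7(\cO)$ with the appropriate multiplier, placing us in the polarized framework of \cite[\S 1]{BGGT}. Theorem \ref{lifting}(ii) supplies the crystalline-ordinary condition and the regular Hodge--Tate weights $\{6,4,2,0,-2,-4,-6\}$ at each place above $\ell$. Oddness at each infinite place follows from $\sigma(c_v)$ being $G(\cO)$-conjugate to $\check\delta(-1)$, which $r_7$ sends to the orthogonal involution of the signature required for oddness of a polarized seven-dimensional representation. For the big-image hypothesis, Theorem \ref{approx}(i) gives $\overline\sigma(\Gamma_F) = G(k)$; since $\ell > 28$, $G(k)$ is perfect by \cite{St}, so $\overline\sigma(\Gamma_{F(\zeta_\ell)}) = G(k)$ and $r_7 \circ \overline\sigma$ is absolutely irreducible. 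Standard results on representations of finite groups of Lie type in the range $\ell \gg 1$ then show that the image $r_7(G(k)) \subset GL(7,k)$ is adequate in the sense required by \cite{BGGT}; this is essentially the same residual setup already exploited in the proof of Theorem \ref{lifting} to invoke \cite[Proposition 3.3.1]{BGGT} on a CM quadratic extension.

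With the hypotheses in hand, I apply a potential automorphy theorem for totally real fields from \cite{BGGT} to $\sigma_7$. This produces a totally real Galois extension $F_1 / F$, an isomorphism $\iota : \Qlb \isoarrow \CC$, and a cuspidal cohomological automorphic representation $\Psi_1$ of $GL(7, \ad_{F_1})$ with $r_\iota(\Psi_1) \cong \sigma_7|_{\Gamma_{F_1}}$. Local--global compatibility then determines each local component $\Psi_{1,v}$ from $\sigma_7|_{\Gamma_{F_{1,v}}}$.

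To upgrade to the everywhere-unramified conclusion, I use the flexibility of the potential automorphy machinery to prescribe additional local conditions. By Theorem \ref{lifting}(iii)--(iv), $\sigma_7$ is unramified outside $\ell p$ and has finite inertia image at each place above $p$. Enlarging $F_1$ to a totally real Galois extension $F'/F$ whose local completions at places above $p$ kill these finite inertia images makes $\sigma_7|_{\Gamma_{F'}}$ unramified at every finite place away from $\ell$. At places above $\ell$, the crystalline-ordinary structure combined with local--global compatibility yields unramified local components of the base change $\Psi(\sigma)$, after a further enlargement of $F'$ if needed to trivialize the inertial restriction of the ordinary characters on the Galois side. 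The main obstacle I anticipate is the adequacy verification for $r_7(G(k))$ together with the bookkeeping at $\ell$-adic places needed to secure everywhere unramified; the potential automorphy theorem itself enters as a black box once its hypotheses are checked.
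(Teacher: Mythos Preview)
Your overall strategy matches the paper's: invoke \cite[Corollary~4.5.2]{BGGT} for potential automorphy of $\sigma_7$, then enlarge the field to achieve the everywhere unramified conclusion. The paper records none of your hypothesis-checking and simply cites the corollary, but your verifications are in the right direction.

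However, your execution of the ``enlarge to get everywhere unramified'' step has two real gaps. First, you write of ``enlarging $F_1$ to a totally real Galois extension $F'/F$'' without saying how the automorphic representation $\Psi_1$ is transported from $F_1$ to $F'$; the vague appeal to ``flexibility of the potential automorphy machinery to prescribe additional local conditions'' does not do this, since the standard flexibility in \cite{BGGT} concerns linear disjointness or splitting behaviour, not forcing $F'$ to \emph{contain} a prescribed ramified extension. The paper's mechanism is soluble base change: by weak approximation one finds a totally real \emph{soluble} Galois extension $F''/F$ with $\sigma_7$ unramified at every finite place $v'' \nmid \ell$ of $F''$, and sets $F^1 = F' \cdot F''$. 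Then $F^1/F$ is Galois (compositum of Galois extensions), $F^1/F'$ is soluble (its Galois group is a subquotient of $\operatorname{Gal}(F''/F)$), and $\Psi_1$ base-changes to $F^1$ by \cite[Lemma~2.2.2]{BGGT}, where local--global compatibility yields unramifiedness away from $\ell$.

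Second, your treatment of places above $\ell$ is off. You propose ``a further enlargement of $F'$ \dots to trivialize the inertial restriction of the ordinary characters on the Galois side'', but those inertial characters are powers of the cyclotomic character and have infinite image, so no finite extension trivializes them. Nothing of the sort is needed: unramifiedness of $\Psi(\sigma)_{v'}$ for $v' \mid \ell$ is already part of the output of \cite[Corollary~4.5.2]{BGGT}. On the Galois side $\sigma_7|_{\Gamma_{F'_{v'}}}$ is \emph{crystalline} (not unramified), and it is this that corresponds, via local--global compatibility at $\ell$, to $\Psi(\sigma)_{v'}$ being unramified in the automorphic sense.
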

\begin{proof} It follows from \cite[Corollary 4.5.2]{BGGT} that there exists a totally real Galois extension $F ' / F$, a cuspidal, cohomological automorphic representation $\Psi(\sigma)$ of $GL(7, \mathbb{A}_{F'})$, and an isomorphism $\overline{\QQ}_\ell \to \CC$ such that $r_\iota(\Psi(\sigma)) \cong \sigma_7|_{\Gamma_{F'}}$. Moreover, for each place $v' | \ell$ of $F'$, $\Psi(\sigma)_{v'}$ is unramified. We need to explain why $\Psi(\sigma)$ can be chosen to be everywhere unramified. 

By weak approximation, we can find a totally real, Galois, soluble extension $F'' / F$ such that for each finite place $v'' \nmid \ell$ of $F''$, $\sigma_7|_{\Gamma_{F''_{v''}}}$ is unramified. Let $F^1 = F' \cdot F''$. Then $F^1 / F'$ is a soluble, totally real extension, $F^1 / F$ is Galois, and $\sigma_7|_{\Gamma_{F^1}}$ is unramified at every finite place not dividing $\ell$. We may therefore replace $F'$ by $F^1$ and $\Psi(\sigma)$ by its base change to $F^1$ (which exists, by \cite[Lemma 2.2.2]{BGGT}, and which is everywhere unramified by local-global compatibility).
\end{proof}
We now forget our existing assumptions and restate the above results in a form suitable for application in \S \ref{sec_descent}.
\begin{thm}\label{thm_globalization_of_fixed_parameter}
	Let $K$ be a finite extension of $\QQ_p$, and let $k$ be a finite field of characteristic $\ell \neq p$. Let $\rho : W_K \to G_2(W(k))$ be a continuous homomorphism, irreducible over $\overline{\QQ}_\ell$. Then $\rho(I_K)$ is finite; we assume that $\ell > 28$ and that $\ell$ does not divide the order of $\rho(I_K)$. Then we can find the following data:
	\begin{enumerate}
		\item A totally real number field $L$, together with a non-empty set $\Sigma$ of $p$-adic places of $L$ such that for each $v \in \Sigma$, $L_v \cong K$.
		\item A finite extension $E / \operatorname{Frac} W(k)$ with ring of integers $\cO$ and a continuous representation $\sigma : \Gamma_L \to G_2(\cO)$ of Zariski dense image such that for each place $v \in \Sigma$, $\sigma|_{W_{L_v}}$ and $\rho$ are $G_2(\cO)$-conjugate.
		\item A cuspidal, tempered, regular algebraic, self-dual automorphic representation $\Psi(\sigma)$ of $GL(7, \mathbb{A}_L)$, unramified outside $\Sigma$, and an isomorphism $\iota : \overline{\QQ}_\ell \to \CC$ such that $r_\iota(\Psi(\sigma)) \cong r_7 \circ \sigma$.
	\end{enumerate}
\end{thm}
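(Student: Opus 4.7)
The plan is to chain Theorems \ref{approx}, \ref{lifting}, and \ref{potaut}, after a mild strengthening of the local deformation condition at the $p$-adic places of the globalizing field.

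First I reduce $\rho$ modulo $\ell$: since $\rho(I_K)$ is finite of order prime to $\ell$, the reduction $\overline{\rho}:  \Gamma_K \ra G_2(k)$ is well-defined and $\overline{\rho}(I_K) \cong \rho(I_K)$. Applying Theorem \ref{approx} to $\overline{\rho}$ produces a totally real field $F$, a non-empty set $\Sigma_F$ of $p$-adic places of $F$ with $F_v \cong K$ for each $v \in \Sigma_F$, and a surjection $\overline{\sigma}:  \Gamma_F \ra G_2(k)$ whose restriction to $\Gamma_{F_v}$ is $G_2(k)$-conjugate to $\overline{\rho}$ for each $v \in \Sigma_F$, with the remaining local and oddness conditions at places over $\ell$, at real places, and outside $\ell p$.

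Next I apply Theorem \ref{lifting} after strengthening the local deformation condition at $\Sigma_F$ from ``trivial lifting of inertia'' (condition (iii) there) to the condition that $\sigma_A|_{\Gamma_{F_v}}$ be $\ker(G_2(A) \ra G_2(k))$-conjugate to $\rho \otimes_{W(k)} A$. Because $\rho(I_K)$ is prime-to-$\ell$, the centralizer of $\rho(I_K)$ in $G_2$ is smooth and the associated local lifting ring is formally smooth over $\cO$ of the same relative dimension as the one treated in \cite[Proposition 4.4]{Pat16}; the Khare--Wintenberger argument --- comparison with the $\CG_7$-deformation ring and the finiteness input of \cite[Theorem 10.2]{T} --- then still yields a lift $\sigma_F:  \Gamma_F \ra G_2(\cO)$ whose restriction to each $\Gamma_{F_v}$ for $v \in \Sigma_F$ is $G_2(\cO)$-conjugate to $\rho$. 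By \cite[Proposition 6.7]{BHKT} the image $\sigma_F(\Gamma_F)$ contains a conjugate of $G_2(\ZZ_\ell)$ and is hence Zariski dense in $G_2$.

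Finally I apply Theorem \ref{potaut} to $\sigma_F$, arranging --- by choosing the totally real Galois extension $F' / F$ of \cite[Corollary 4.5.2]{BGGT} and the subsequent soluble base change of Theorem \ref{potaut}'s proof with the help of weak approximation --- that $F' / F$ splits completely at every place of $\Sigma_F$. Setting $L = F'$ and letting $\Sigma$ be the set of places of $L$ lying above $\Sigma_F$, each $v \in \Sigma$ has $L_v \cong K$ and $\sigma := \sigma_F|_{\Gamma_L}$ satisfies $\sigma|_{W_{L_v}}$ $G_2(\cO)$-conjugate to $\rho$. The automorphic representation $\Psi(\sigma)$ is cuspidal and cohomological with $r_\iota(\Psi(\sigma)) \cong r_7 \circ \sigma$, unramified outside $\Sigma$; self-duality is automatic because $r_7$ factors through $SO(7)$, and temperedness at finite places follows from regular algebraicity together with self-duality, via the Ramanujan conjecture for regular algebraic self-dual cuspidal automorphic representations of $GL(n)$ over a totally real field. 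The main obstacle is the refinement of the local deformation problem at the places in $\Sigma_F$ so that the global lift matches $\rho$ on the nose, not merely some lift of $\overline{\rho}$ with the ``correct inertia''; the prime-to-$\ell$ hypothesis on $\rho(I_K)$ is exactly what keeps the strengthened local condition formally smooth and therefore compatible with the finiteness argument for the global deformation ring.
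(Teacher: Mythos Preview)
Your argument is broadly on the right track but diverges from the paper's in two places, and the second of these conceals a gap you have not closed.

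\textbf{Pinning down $\rho$ locally.} You strengthen the deformation condition at the $p$-adic places to force the lift to equal $\rho$ on the nose. This is unnecessary. The paper instead records a rigidity remark: by \cite[Lemma~A.2]{FKP} one has $h^{0}(\Gamma_K,\mathfrak{g}_k)=0$, and then (cf.\ \cite[Lemma~4.17]{Pat16}) the tangent space of the \emph{minimal} deformation functor of $\overline{\rho}$ is already zero. Hence any minimally ramified lift of $\overline{\rho}$ to $G_2(\cO)$ is $G_2(\cO)$-conjugate to $\rho$, and Theorem~\ref{lifting} \emph{as stated} already gives $\sigma|_{\Gamma_{F_v}}\sim\rho$ for every $v\mid p$. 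Your strengthening is harmless but buys nothing beyond this.

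\textbf{Passing from $F'$ to a field with completion $K$.} Here the two proofs really differ. You assert that the potential-automorphy extension $F'/F$ of \cite[Corollary~4.5.2]{BGGT} can be chosen to split completely at $\Sigma_F$, ``with the help of weak approximation''. That corollary, however, only guarantees linear disjointness from a prescribed finite extension $F^{\mathrm{avoid}}$; no amount of weak approximation turns linear disjointness into complete splitting at chosen places. One could probably extract such a splitting by reopening the Moret--Bailly step inside the potential-automorphy machine, but you have not done so, and it is not a one-line matter. The paper bypasses the issue with a decomposition-group trick that works for \emph{any} totally real Galois $F'/F$: pick a place $v'$ of $F'$ over some $v\mid p$, set $F''=(F')^{D_{v'/v}}$ and $v''=v'|_{F''}$. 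Then $F''_{v''}\cong F_v\cong K$ automatically, and $\operatorname{Gal}(F'/F'')=D_{v'/v}\cong\operatorname{Gal}(F'_{v'}/F_v)$ is soluble, being the Galois group of an extension of local fields. Soluble base change \cite[Lemma~2.2.2]{BGGT} then descends the automorphic representation from $F'$ to $F''$, and a further soluble totally real extension $L/F''$ in which $v''$ splits (this time genuinely by weak approximation) kills the remaining ramification away from $\Sigma$. This descent step is the idea your argument is missing.
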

\begin{proof}
	 We begin with some remarks. First, $\rho$ has finite image, and so extends to a representation of $\Gamma_K$. Indeed, $\rho(I_K)$ is finite since $r_7 \circ \rho$ is semisimple; the $\ell$-adic monodromy theorem implies the existence of a finite extension $K' / K$ such that $r_7 \circ \rho(I_{K'})$ is semisimple and unipotent, hence trivial. Let $\phi \in W_K$ be a Frobenius lift. Since $\rho(I_K)$ is finite, some power $\rho(\phi)^N$ centralizes $\rho(W_K)$. Since $\rho(W_K)$ is irreducible, this forces $\rho(\phi)^N$ to lie in the centre of $G$, hence (since $G$ is adjoint) to be trivial. Thus $\rho(W_K)$ is finite.
	
	Second, let  $\overline{\rho} : W_K \to G_2(k)$ denote the reduction of $\rho$ modulo $\ell$. Then any minimally ramified lift of $\overline{\rho}$ to $G_2(\cO)$ is $G_2(\cO)$-conjugate to $\rho$. Indeed, \cite[Lemma A.2]{FKP} shows that $h^0(\Gamma_K, \mathfrak{g}_k) = 0$, hence (cf. \cite[Lemma 4.17]{Pat16}) that the tangent space to the minimal deformation functor is trivial. This implies that any minimally ramified lift is even $\ker(G_2(\cO) \to G_2(k))$-conjugate to $\rho$. 
	
	By Theorem \ref{potaut}, we can find the following objects:
	\begin{itemize}
		\item A totally real field $F$, and a totally real Galois extension $F' / F$.
		\item A continuous representation $\sigma : \Gamma_F \to G_2(\cO)$ such that $\sigma|_{\Gamma_{F'}}$ has Zariski dense image, $\sigma$ is unramified outside $\ell p$, and for each place $v | \ell$ of $F$, $r_7 \circ \sigma|_{\Gamma_{F_v}}$ is crystalline with Hodge--Tate numbers $\{ 6, 4, 2, 0, -2, -4, -6 \}$ (with respect to any embedding of $F_v$ in $\overline{\QQ}_\ell$).
		\item For each place $v | p$ of $F$, an isomorphism $F_v \cong K$ such that $\sigma|_{\Gamma_{F_v}}$ and $\rho$ are $G_2(\cO)$-conjugate.
		\item A cuspidal, regular algebraic, self-dual automorphic representation $\Psi(\sigma)'$ of $GL(7, \mathbb{A}_{F'})$ which is everywhere unramified and an isomorphism $\iota : \overline{\QQ}_\ell \to \CC$ such that $r_\iota(\Psi(\sigma)') \cong r_7 \circ \sigma|_{\Gamma_{F'}}$.
	\end{itemize}
	Let $v'$ be a place of $F'$ lying above $v$, with $v$ dividing $p$, and let $D_{v' / v} \subset \operatorname{Gal}(F' / F)$ be the decomposition group. We set $F'' = (F')^{D_{v' / v}}$, and $v'' = v'|_{F''}$. Then $F''_{v''} \cong K$ and $\sigma|_{\Gamma_{F''_{v''}}}$ and $\rho$ are $G_2(\cO)$-conjugate. Moreover, $F' / F''$ is a soluble extension. By soluble descent (see e.g.\ \cite[Lemma 2.2.2]{BGGT}), there exists a cuspidal, regular algebraic, self-dual automorphic representation $\Psi(\sigma)''$ of $GL(7, \ad_{F''})$ such that $r_\iota(\Psi(\sigma)'') \cong r_7 \circ \sigma|_{\Gamma_{F''}}$. Finally, we once again use weak approximation, as in the proof of Theorem \ref{potaut}, to find a soluble totally real extension $L / F''$ in which $v''$ splits and such that the base change $\Psi(\sigma)$ of $\Psi(\sigma)''$ to $L$ is unramified away from $v''$. The proof is complete on taking $\Sigma$ to be the set of places of $L$ lying above $v''$, and noting that \cite[Corollary 1.3]{S} shows that $\Psi(\sigma)$ is tempered.
\end{proof}

\section{Automorphic descent from $GL(7)$ to $G_2$}\label{sec_descent}

The following theorem was recently proved by Hundley and Liu:

\begin{thm}\label{HLiu} Let $F$ be a number field, and let $\Psi$ be a cuspidal automorphic representation of $GL(7,\ad_{F})$.  Suppose that the following conditions hold.
\begin{enumerate}
\item For almost all places $v$ of $F$ at which $\Psi_v$ is unramified, the Satake parameter of the local component $\Psi_v$ is conjugate, in $GL(7, \CC)$, to an element of $r_7(G_2(\CC))$.
\item The partial $L$-function $L^S(s,\Psi,\wedge^3)$ has a pole at $s = 1$, for some finite set $S$.  
\end{enumerate}
Then there exists a globally generic  cuspidal automorphic representation $\Pi$ of $G_2(\ad_F)$ such that for all but finitely many places at which $\Pi_v$ is unramified, the Satake parameter of $\Psi_v$ is the image under $r_7$ of the Satake parameter of $\Pi_v$. 

Moreover, if $v$ is a finite place at which $\Psi_v$ is both unramified and tempered, with Satake parameter conjugate to an element of $r_7(G_2(\CC))$, then $\Pi_v$ is unramified and the Satake parameter of $\Psi_v$ is the image under $r_7$ of the Satake parameter of $\Pi_v$.
\end{thm}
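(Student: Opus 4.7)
The plan is to follow the Ginzburg--Rallis--Soudry descent philosophy in two stages: first descend $\Psi$ from $GL(7)$ to a cuspidal globally generic representation $\tau$ of $Sp(6,\ad_F)$, and then use the exceptional theta correspondence for the dual pair $(G_2,PGSp(6)) \subset E_7$ (run in the opposite direction from \S\ref{thetaR}) to produce $\Pi$ on $G_2$.

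For the first stage, hypothesis (1) together with strong multiplicity one forces $\Psi$ to be self-dual of orthogonal type, since its Satake parameters lie in $r_7(G_2(\CC)) \subset SO(7,\CC)$. In particular $L^S(s,\Psi,\operatorname{Sym}^2)$ has a pole at $s=1$, and the Ginzburg--Rallis--Soudry descent construction (the inverse of the Cogdell--Kim--Piatetski-Shapiro--Shahidi lift used in Theorem \ref{thm_Arthur}) produces a globally generic cuspidal automorphic representation $\tau$ of $Sp(6,\ad_F)$ whose functorial transfer is $\Psi$, with matching Satake parameters at unramified places. Because those Satake parameters lie in $r_7(G_2) \subset \operatorname{Spin}(7)$, the representation $\tau$ has trivial central character and descends to $PSp(6,\ad_F)$.

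For the second stage, form the global theta lift $\Theta_{G_2}(\tau)$ along the minimal representation of $E_7$, viewing $\tau$ as a representation of $PSp(6,\ad_F)$. This is where hypothesis (2) enters crucially: one establishes an unfolding identity (in the spirit of Rallis) that expresses a suitable Whittaker-type period of $\Theta_{G_2}(\tau)$ as the residue at $s=1$ of a Rankin--Selberg-type integral representing $L^S(s,\Psi,\wedge^3)$. The existence of the pole then forces $\Theta_{G_2}(\tau) \neq 0$, and the same identity shows that an irreducible constituent $\Pi$ of $\Theta_{G_2}(\tau)$ is globally generic.

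It remains to check cuspidality of $\Pi$ and the matching of local parameters. Cuspidality follows from a first-occurrence / theta-tower argument: any non-zero constant term of $\Pi$ along a proper parabolic of $G_2$ would exhibit $\tau$ as the theta lift of a representation of a smaller group, contradicting the precise location of the pole of $L^S(s,\Psi,\wedge^3)$; this is the same style of reasoning used in Theorem \ref{globalGRS} and in \cite{KLS10}. The matching of Satake parameters at unramified tempered places is then the unramified local calculation of Proposition \ref{unramG2}, using the $G_2 \to \operatorname{Spin}(7)$ embedding of dual groups together with the known Satake parameter of $\tau_v$. The main obstacle is the non-vanishing step in the second stage: exceptional Howe duality does not yield this for free, and one must find an explicit integral representation of $L^S(s,\Psi,\wedge^3)$ whose pole is directly linked to a non-vanishing Whittaker period of the theta lift — locating and evaluating this integral is the technical heart of the argument.
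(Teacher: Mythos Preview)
The paper does not prove this theorem; it cites \cite{HL} for the main statement (genericity, cuspidality, and the weak lift from Theorems 6.1.17, 6.4.31, 6.5.6 there) and adds only a short argument for the ``moreover'' clause. Hundley--Liu's construction is a \emph{direct} descent from $GL(7)$ to $G_2$, built from Fourier coefficients of residues of Eisenstein series, with the pole of $L^S(s,\Psi,\wedge^3)$ governing the nonvanishing of that descent in one stroke. Your two-stage route ($GL(7)\to Sp(6)$ via Ginzburg--Rallis--Soudry, then $PSp(6)\to G_2$ via the reverse exceptional theta lift) is a genuinely different strategy from the one being cited.

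That strategy has a real gap precisely where you flag it: the unfolding identity linking a Whittaker period of the backward lift $\Theta_{G_2}(\tau)$ to $L^S(s,\Psi,\wedge^3)$ is not in the literature and is not routine --- note that the $L$-function naturally attached to the $(G_2,PSp(6))$ theta tower is the spin $L$-function of $PSp(6)$, which under the unramified comparison (cf.\ the computation in the proof of Proposition~\ref{prop_global_data_implies_injectivity}) matches $L(s,\Psi)\cdot\zeta(s)$ rather than $L(s,\Psi,\wedge^3)$, so the link you posit is not the obvious one. There is also a gap in descending $\tau$ from $Sp(6)$ to $PSp(6)$: unramifiedness at almost all places forces the local central characters to be trivial there, but does not force them to be trivial at the finitely many ramified places. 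Finally, for the ``moreover'' clause, Proposition~\ref{unramG2} concerns the \emph{forward} lift and does not by itself tell you that $\Pi_v$ is unramified; the paper instead cites \cite[Proposition~6.5.5]{HL} to see that $\Pi_v$ is the generic subquotient of the matching unramified principal series, and then uses temperedness to conclude that this principal series is irreducible, hence $\Pi_v$ is itself spherical. The role of the temperedness hypothesis is exactly this irreducibility step, which your sketch does not address.
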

\begin{proof}
	The theorem up to ``moreover'' is contained in \cite[Theorem 6.1.17]{HL} \cite[Theorem 6.4.31]{HL} and \cite[Theorem 6.5.6]{HL} (giving genericity, cuspidality and weak lifting, respectively). If $v$ is any finite place at which $\Psi_v$ is unramified with Satake parameter conjugate to $r_7(G_2(\CC))$, then \cite[Proposition 6.5.5]{HL} shows that $\Pi_v$ is the generic subquotient of the unramified principal series representation of $G_2$ with Satake parameter corresponding to that of $\Psi_v$. If $\Psi_v$ is tempered then this unramified principal series representation is also tempered and irreducible, implying the final claim.
\end{proof}
We remark that if $F$ is totally real and $\Psi$ is cuspidal and regular algebraic with infinitesimal character ``integral of $G_2$ type'', then $\Pi_\infty$ must be discrete series. Indeed, the theta lift of $\Pi$ is an automorphic representation of $PSp(6)$ whose restriction to $Sp(6)$ gives rise by functorial transfer to an automorphic representation $\Psi'$ of $GL(7, \ad_F)$ with the property  that the infinitesimal character of $\Psi'_\infty$ is obtained from that of $\Pi_\infty$ by functoriality
(cf. the discussion in \S \ref{thetaR}). By strong multiplicity one, $\Psi = \Psi'$, and the infinitesimal character of $\Pi_\infty$ is integral. Since $\Pi$ is globally generic, fact \ref{inf} implies that $\Pi_\infty$ is discrete series. 
\begin{corollary}\label{descent} Let $L$ \textup{(}a totally real number field\textup{)}, $\sigma: \Gamma_L \to G_2(\overline{\QQ}_\ell)$ \textup{(}a continuous homomorphism\textup{)} and $\Psi(\sigma)$ \textup{(}a cuspidal tempered automorphic representation of $GL(7, \ad_L)$ with $r_7 \circ \sigma \cong r_\iota(\Psi(\sigma))$\textup{)} be as in the statement of Theorem \ref{thm_globalization_of_fixed_parameter}. Then there exists a globally generic, cuspidal automorphic representation $\Pi(\sigma)$ of $G_2(\ad_L)$ such that $\Pi(\sigma)_\infty$ is discrete series and for every place $v$ of $L$ at which $\Psi(\sigma)$ is unramified, $\Pi(\sigma)_v$ is unramified and the Satake parameter of $\Psi(\sigma)_v$ is the image in $GL(7, \CC)$ of the Satake parameter of $\Pi(\sigma)_v$.
\end{corollary}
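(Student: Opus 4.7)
The plan is to deduce the corollary from Theorem \ref{HLiu} applied to $\Psi(\sigma)$. The two hypotheses of Theorem \ref{HLiu} must be verified, and then the required conclusions will follow from its ``moreover'' clause together with the remark immediately following it.

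The first hypothesis is essentially automatic. The isomorphism $r_\iota(\Psi(\sigma)) \cong r_7 \circ \sigma$ supplied by Theorem \ref{thm_globalization_of_fixed_parameter}, combined with local-global compatibility at finite unramified places, implies that the semisimple Satake parameter of $\Psi(\sigma)_v$ equals the Frobenius conjugacy class of $r_7 \circ \sigma$ at $v$ (under $\iota$), which by construction lies in $r_7(G_2(\CC))$. This handles condition (1) of Theorem \ref{HLiu}.

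The second hypothesis, the pole at $s=1$ of $L^S(s, \Psi(\sigma), \wedge^3)$, is the main technical step and the expected main obstacle. The key observation is that $G_2$ embeds in $GL(7)$ precisely as the stabilizer of a non-degenerate alternating three-form, so $\wedge^3 r_7$ contains the trivial representation as a direct summand; explicitly, $\wedge^3 r_7 \cong \mathbf{1} \oplus V_7 \oplus V_{27}$ as $G_2$-modules. By Theorem \ref{thm_globalization_of_fixed_parameter}(2), $\sigma$ has Zariski dense image in $G_2(\overline{\QQ}_\ell)$, so this $G_2$-module decomposition induces a decomposition of the semisimple $\Gamma_L$-module $\wedge^3 r_\iota(\Psi(\sigma)) \cong \wedge^3 (r_7 \circ \sigma)$, and in particular a trivial $\Gamma_L$-summand. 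Comparing local Euler factors at unramified places yields the factorization
\[
L^S(s, \Psi(\sigma), \wedge^3) \;=\; \zeta_L^S(s) \cdot L^S(s, W),
\]
where $W = (V_7 \oplus V_{27}) \circ \sigma$ is the $34$-dimensional complementary Galois summand. The partial Dedekind zeta function contributes a simple pole at $s=1$; the remaining task is to verify that $L^S(s,W)$ is holomorphic and non-vanishing at $s=1$, so as not to cancel this pole. Since $\Psi(\sigma)$ is tempered, the Frobenius eigenvalues of $W$ have absolute value $1$, and standard non-vanishing results at the edge of absolute convergence (in the style of Jacquet--Shalika, once the irreducible Galois constituents of $W$ are matched with appropriate automorphic $L$-functions) ought to suffice. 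Carrying this out cleanly is what I view as the principal technical hurdle of the proof.

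With both hypotheses verified, Theorem \ref{HLiu} produces the globally generic cuspidal representation $\Pi(\sigma)$ of $G_2(\ad_L)$; its ``moreover'' clause, applied at every finite place where $\Psi(\sigma)$ is unramified (and hence tempered, by Theorem \ref{thm_globalization_of_fixed_parameter}(3)), delivers the asserted unramified compatibility of Satake parameters. That $\Pi(\sigma)_\infty$ lies in the discrete series is then immediate from the remark following Theorem \ref{HLiu}: the theta lift of $\Pi(\sigma)$ to $PSp(6,\ad_L)$ transfers functorially to an automorphic representation of $GL(7,\ad_L)$ agreeing almost everywhere with $\Psi(\sigma)$, hence equal to $\Psi(\sigma)$ by strong multiplicity one; this forces the infinitesimal character of $\Pi(\sigma)_\infty$ to be integral of $G_2$-type, and Fact \ref{inf} applied to the globally generic $\Pi(\sigma)_\infty$ places it in the discrete series.
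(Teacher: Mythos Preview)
Your overall strategy matches the paper's: verify the two hypotheses of Theorem~\ref{HLiu}, then invoke its ``moreover'' clause and the subsequent remark. Hypothesis~(i) and the deduction of the conclusions (including the discrete-series claim at infinity) are handled exactly as in the paper.

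The difference, and the gap, is in how you verify hypothesis~(ii). You use the $G_2$-decomposition $\wedge^3 r_7 \cong \mathbf{1} \oplus V_7 \oplus V_{27}$, place the pole in $\zeta_L^S(s)$, and are then left to show that $L^S(s, (V_7 \oplus V_{27}) \circ \sigma)$ is finite and non-zero at $s=1$. You rightly flag this as the main hurdle, and as written it is incomplete: $V_{27} \circ \sigma$ is a Galois representation not known a priori to be automorphic, so Jacquet--Shalika-type results are not available for its $L$-function directly, and temperedness alone does not control behaviour at the edge $s=1$.

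The paper sidesteps this by a coarser regrouping. Instead of peeling off the trivial summand, it uses
\[
\wedge^3 r_7 \;\cong\; r_7 \,\oplus\, \operatorname{Sym}^2 r_7
\]
(note that your $\mathbf{1} \oplus V_{27}$ is precisely $\operatorname{Sym}^2 r_7$), which gives
\[
L^S(s,\Psi(\sigma),\wedge^3) \;=\; L^S(s,\Psi(\sigma)) \cdot L^S(s,\Psi(\sigma),\operatorname{Sym}^2).
\]
Both factors on the right are standard automorphic $L$-functions attached to the cuspidal $\Psi(\sigma)$ itself, so classical results apply immediately: the first factor is non-vanishing at $s=1$ by Jacquet--Shalika and Shahidi, and the second has a simple pole at $s=1$ because $\Psi(\sigma)$ is self-dual cuspidal on $GL(n)$ with $n$ odd. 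No matching of individual Galois constituents with automorphic objects is required, and your ``principal technical hurdle'' simply evaporates.
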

\begin{proof}  Since the local parameter of $\Psi(\sigma)$ at every  place at which $\sigma$ is unramified factors through $r_7(G_2)$, $\Psi(\sigma)$ satisfies  hypothesis (i) of Theorem \ref{HLiu}. If hypothesis (ii) is satisfied, then $\Psi(\sigma)$ is the functorial lift of some cuspidal $\Pi(\sigma)$. Finally, the temperedness of $\Psi(\sigma)$ will imply that $\Psi(\sigma)_v$ is the lift of $\Pi(\sigma)_v$ for all unramified places $v$.
%	Moreover, at every real place $\alpha$ of $L$, $\Pi(\sigma)_\alpha$  is necessarily cohomological, because the exceptional theta correspondence is functorial for infinitesimal characters; and it is also tempered because it is generic.
	
It remains to verify that (ii) is satisfied. 
%Since $\Psi(\sigma)$ is cohomological, it follows from \cite{Cl13,Ca} that it is tempered at all places.  Thus it suffices to show that the partial $L$-function $L^S(s,\Psi(\sigma),\wedge^3)$ has a pole at $s = 1$ for some finite set $S$.  
Now $L^S(s,\Psi(\sigma),\wedge^3) = L^S(s,\wedge^3\circ\sigma_7)$ if $S$ contains all ramified places.  
Moreover, since $\sigma_7 = r_7 \circ \sigma$ factors through $r_7(G_2)$, it is well known that
\begin{equation}\label{wedge}  \wedge^3\circ \sigma_7 \isoarrow \sigma_7 \oplus Sym^2\circ \sigma_7.
\end{equation}
Thus 
$$L^S(s,\Psi(\sigma),\wedge^3) = L^S(s,\Psi(\sigma))\cdot L^S(s,\Psi(\sigma),Sym^2).$$
The first factor does not vanish at $s = 1$ by the theorem of Jacquet--Shalika and Shahidi, whereas the second factor has a simple pole at $s = 1$ because $\Psi(\sigma)$ is a self-contragredient representation of an odd general linear group (cf. the discussion of \cite[p. 139]{BG}).
\end{proof}

\begin{thm}\label{surj}  Let $K$ be a $p$-adic local field. Then the map
\[	 \CL_g :  \CA^0_g(G_2,K) \rightarrow \CG^0(G_2,K) \]
	 is surjective.
\end{thm}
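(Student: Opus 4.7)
The plan is to take $\rho \in \CG^0(G_2,K)$, globalise it to a $G_2$-valued Galois representation of a totally real field using the machinery of Section \ref{GaloisLiftings}, descend the resulting automorphic representation on $GL(7)$ to one on $G_2$ via Corollary \ref{descent}, and then verify that the local component of this descent at a well-chosen place realises $\rho$ as its $\CL_g$-parameter.

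First I would check that $\rho(W_K)$ is finite. Continuity and the no-small-subgroups property of $G_2(\CC)$ force $\rho(I_K)$ to be finite; choosing a Frobenius lift $\phi$, and using that $\rho(W_K)$ has trivial centraliser in $G_2(\CC)$ (because $\rho$ is $G_2$-irreducible and $G_2$ is adjoint), some power $\rho(\phi)^N$ must be trivial, so $\rho(W_K)$ is finite. This lets me realise $\rho$ over $W(k)$ for a finite field $k$ of some characteristic $\ell \neq p$; enlarging $\ell$ I can further arrange $\ell > 28$ and $\ell \nmid |\rho(W_K)|$. Theorem \ref{thm_globalization_of_fixed_parameter} then produces a totally real field $L$, a non-empty set $\Sigma$ of $p$-adic places of $L$ with $L_v \cong K$ for $v \in \Sigma$, a Zariski-dense $\sigma : \Gamma_L \to G_2(\cO)$ with $\sigma|_{W_{L_v}}$ conjugate to $\rho$ for every $v \in \Sigma$, and a cuspidal, tempered, regular algebraic, self-dual automorphic representation $\Psi(\sigma)$ of $GL(7, \ad_L)$ satisfying $r_\iota(\Psi(\sigma)) \cong r_7 \circ \sigma$. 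Applying Corollary \ref{descent} yields a globally generic cuspidal automorphic representation $\Pi(\sigma)$ of $G_2(\ad_L)$ whose $G_2 \to GL(7)$ functorial transfer agrees with $\Psi(\sigma)$ at every unramified place, and strong multiplicity one for $GL(7)$ forces the two to coincide at every place.

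Set $\pi := \Pi(\sigma)_v$ for any $v \in \Sigma$. Global genericity of $\Pi(\sigma)$ yields that $\pi$ is generic, and combining local--global compatibility of $r_\iota(\Psi(\sigma))$ at $v$ with the place-by-place equality above gives
\[
\CL_g'(\pi) \,=\, \operatorname{rec}_{L_v}(\Psi(\sigma)_v)^{ss} \,\cong\, (r_7 \circ \rho)^{ss} \,=\, r_7 \circ \rho,
\]
where the last equality uses that $\rho$ has semisimple image. The injectivity of $r_{7,\ast}$ recorded in Section \ref{sec_galois_par} then forces $\CL_g(\pi) = \rho$, assuming $\pi$ is supercuspidal so that $\CL_g(\pi)$ is defined.

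What remains, and what I expect to be the \emph{main obstacle}, is verifying that $\pi \in \CA_g^0(G_2,K)$. Temperedness of $\Psi(\sigma)$ together with compatibility of the functorial transfer with tempered $L$-packets on $Sp(6)$ forces $\pi$ to be tempered. If $\pi$ were not supercuspidal, it would be a full parabolic induction $\operatorname{Ind}_P^{G_2} \tau$ from a proper parabolic $P \subset G_2$ with Levi $M$. Tracing this through the definition of $\CL_g'$, namely through the exceptional theta lift to $PSp(6)$, the CKPS lift to $GL(7)$, and the local Langlands correspondence for $GL(7)$, together with the compatibility of each of these transfers with parabolic induction, would exhibit $r_7 \circ \rho$ as the parameter of a $GL(7)$-representation induced from the standard Levi corresponding to $\hat M \hookrightarrow G_2(\CC) \xrightarrow{r_7} GL(7, \CC)$. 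This would place the image of $\rho$ inside a proper Levi of $G_2(\CC)$, contradicting the $G_2$-irreducibility of $\rho$. The delicate step here is the theta-lift part of the chain: it requires a precise description of $\theta_7$ applied to a parabolically induced representation of $G_2(L_v)$, which one expects to extract from the analysis of the minimal representation of $E_7$ along the parabolic subgroups relevant to the dual pair $(G_2,PGSp(6))$, but which needs to be verified cleanly.
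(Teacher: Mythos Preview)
Your reduction to the supercuspidality of $\pi$ is correct and matches the paper's proof: the globalisation via Theorem~\ref{thm_globalization_of_fixed_parameter}, the descent via Corollary~\ref{descent}, and the identification $\CL_g'(\pi) = r_7 \circ \rho$ all proceed essentially as you describe (the paper identifies the global Galois representations by checking at unramified places and invoking \cite{Gri95}, rather than by strong multiplicity one on $GL(7)$, but either route works).

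The gap is exactly where you flag it: the argument that $\pi$ must be supercuspidal. Your proposed route---trace a hypothetical parabolic induction through the chain $\theta_7 \to \Psi \to \operatorname{rec}$ and conclude that $\rho$ factors through a proper Levi---requires knowing that the exceptional theta lift $\theta_7$ is compatible with parabolic induction for generic tempered representations of $G_2(K)$, and no such statement is available in the literature in the form you need. (Your preliminary step, deducing that $\pi$ is tempered from the temperedness of $\Psi(\sigma)_v$, would also need justification: pulling temperedness \emph{back} through the exceptional theta lift is not obvious.) The paper sidesteps this entirely by arguing on the $PSp(6)$ side. If $\theta_7(\pi)$ is supercuspidal, then \cite[Proposition~3.4]{SWe} already gives that $\pi$ is supercuspidal. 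If not, $\theta_7(\pi)$ has nonzero Jacquet module along some maximal parabolic of $PSp(6)$; since $\rho$ is $G_2$-irreducible, \cite[Proposition~1.1]{SWe} forces this to be the Siegel parabolic, with supercuspidal Jacquet module. The Jacquet-module analysis of the minimal representation of $E_7$ in \cite[Theorem~5.3]{MS}, re-examined in the proof of \cite[Proposition~3.6]{SWe}, then shows that $\theta_6(\pi)$ is a nonzero supercuspidal representation of $PGL(3,K)$, and \cite[Theorem~21]{GS04} concludes that $\pi$ is supercuspidal. In other words, the paper replaces your hoped-for ``compatibility of $\theta_7$ with parabolic induction on $G_2$'' by concrete structural results on the Jacquet modules of the minimal representation of $E_7$ along parabolics of $PSp(6)$---precisely the kind of analysis you allude to in your final sentence, but carried out in a specific and rather delicate way.
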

\begin{proof} 
	Let $\rho : W_K \to G_2(\CC)$ be an irreducible representation. We will construct $\pi \in \CA^0_g(G_2,K)$ such that $\CL_g(\pi) = \rho$.	Fix a prime $\ell > 28$ which does not divide the (finite) order of $\rho(W_K)$, and let $\iota : \overline{\QQ}_\ell \to \CC$ be an isomorphism. We may assume that $\iota^{-1} \rho$ takes values in $G_2(\ZZ_\ell)$. We take the totally real number field $L$ and homomorphism $\sigma : \Gamma_L \to G_2(\cO)$ as in Theorem \ref{thm_globalization_of_fixed_parameter}, and apply Corollary \ref{descent}. Thus there is a non-empty set $\Sigma$ of $p$-adic places of $L$, and for each $v \in \Sigma$ an isomorphism $K \cong L_v$ such that $\iota^{-1}\rho, \sigma|_{\Gamma_{L_v}}$ are $G_2(\cO)$-conjugate. Fix a choice of $v \in \Sigma$, and let $\pi = \Pi(\sigma)_v$. We must show that  $\pi$ is supercuspidal and that $\CL_g(\pi) = \rho$.
	
	We first note that $r_\iota(\Pi(\sigma))$ (defined as in the proof of Proposition \ref{prop_image_in_G_2}) is $G_2(\overline{\QQ}_\ell)$-conjugate to $\sigma$; indeed, this can be checked at unramified places, so follows from \cite{Gri95}. We therefore just need to check that $\pi$ is supercuspidal. If $\theta_7(\pi)$ is supercuspidal, then Proposition 3.4 of \cite{SWe} shows that $\pi$ is also supercuspidal.

	We can therefore assume that $\theta_7(\pi)$ is not supercuspidal.  	 Thus its Jacquet module $J_P(\theta_7(\pi)) \neq 0$ for some maximal parabolic subgroup $P \subset PSp(6)$.   It follows from \cite[Proposition 7.5]{CKPS} that the image of the local parameter $r_7 \circ \rho$ is contained in a proper parabolic subgroup of $SO(7)$.  	However, since the image of $\rho$ is not contained in any proper parabolic of $G_2$, it  follows from
\cite[Proposition 1.1]{SWe} that  that $P$ must be the Siegel parabolic subgroup, with Levi quotient $GL(3)$.   In particular, $J_P(\theta_7(\pi))$ must be
supercuspidal.

We now turn to the proof of \cite[Proposition 3.6]{SWe}, specifically the discussion labelled (Siegel) on p. 759, which computes $J_P(\theta_7(\pi))$.  Although
the running hypothesis in that Proposition is that $\pi$ is supercuspidal, this hypothesis is only used in the first paragraph, which cites \cite[Theorem 5.3]{MS} to prove
that, (modifying the notation of \cite{MS}), $J_P(\theta_7(\pi))\boxtimes \pi$ is a subrepresentation of $(\Pi_7)_{U_7}$.  Here $\Pi_7$ is the minimal representation
of the adjoint split group $\mathbf{E_7}$ over the field $K$, and $U_7$ is the unipotent radical of a specific maximal parabolic subgroup of $\mathbf{E_7}$ denoted $\mathbf{Q_7}$ (with Levi quotient isomorphic to $\mathbf{E_6}$).  

Suppose $J_P(\theta_7(\pi))\boxtimes \pi$ is {\it not} a subrepresentation of $(\Pi_7)_{U_7}$.  Then the discussion on p. 759 of \cite{SWe} implies that 
$J_P(\theta_7(\pi))\boxtimes \pi$ intertwines with one of the terms (1) or (2) of \cite[Theorem 5.3]{MS}.  But  we have seen that $J_P(\theta_7(\pi))$
is supercuspidal, and this is incompatible with the description of these terms, which are  induced from proper parabolic subgroups of $GL(3)$.  

Thus $J_P(\theta_7(\pi)) \boxtimes \pi$ is a non-zero subrepresentation of $(\Pi_7)_{U_7}$.  The second displayed formula on p. 759 of \cite{SWe} then asserts that $\theta_6(\pi) \neq 0$, and moreover that $\theta_6(\pi)$ is a supercuspidal representation of $PGL(3,K)$.     But then \cite[Theorem 21]{GS04}
implies that $\pi$ is supercuspidal. 

%{\color{red} Since $\theta_7(\pi)$ is a subquotient of $\theta_{7, K, [\pi]}$ (notation as in \eqref{thetalocal}), $J_P(\theta_7(\pi))$ is a subquotient of $J_P(\theta_{7, K, [\pi]})$, hence of $\theta_{6, K, [\pi]}$, by  . By construction, $J_P(\theta_7(\pi))$ is a non-zero direct sum of supercuspidal representations of $GL(3, K)$, so }
\end{proof}

\section{Injectivity}

Let $K$ be a $p$-adic local field. The final step in our argument is to prove that the parameterization
\[ \CL_g :  \CA^0_g(G_2,K) \rightarrow \CG^0(G_2,K) \]
is injective (hence bijective). We recall that if $\pi \in \CA^0_g(G_2,K)$, then $\theta_7(\pi)$ is a representation of $PSp(6, K)$. Its restriction to $Sp(6, K)$ contains a unique generic constituent $\sigma$, which lifts to $GL(7, K)$ by Theorem \ref{thm_Arthur}, and $\CL_g(\pi)$ is defined to be  the unique $G_2$-valued parameter which is conjugate in $GL(7, \CC)$ to the Galois parameter of the lift $\Psi(\sigma)$ to $GL(7, K)$. 

The lifting from generic representations of $Sp(6, K)$ to representations of $GL(7, K)$ is injective. On the other hand, Savin--Weissman study the injectivity of the lift to $PSp(6, K)$ (see \cite[Theorem 4.7]{SWe}). The problematic step for us is therefore restriction from $PSp(6, K)$ to $Sp(6, K)$. Fortunately this has been analyzed by Xu \cite{Xu}, and we will be able to prove injectivity by combining these results with a global argument. 
\begin{lemma}\label{lem_twisting}
    Let $\pi, \pi' \in \CA^0_g(G_2,K)$, and suppose that $\CL_g(\pi) = \CL_g(\pi')$. Then there exists a quadratic character $\omega : PSp(6, K) \to \CC^\times$ such that $\theta_7(\pi) \cong \theta_7(\pi') \otimes \omega$. 
\end{lemma}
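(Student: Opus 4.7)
The plan is to reduce from the equality of Galois parameters $\CL_g(\pi) = \CL_g(\pi')$ to the equality of the generic $Sp(6, K)$-constituents $\sigma, \sigma'$ of $\theta_7(\pi)|_{Sp(6, K)}$ and $\theta_7(\pi')|_{Sp(6, K)}$, and then to invoke Clifford theory for the restriction from $PSp(6, K)$ to $Sp(6, K)$.

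By definition, the assumption $\CL_g(\pi) = \CL_g(\pi')$ gives $\operatorname{rec}_K(\Psi(\sigma))^{ss} = \operatorname{rec}_K(\Psi(\sigma'))^{ss}$, and it suffices to show that both Weil--Deligne representations are already semisimple. Since $\pi, \pi'$ are supercuspidal, $\CL_g(\pi)$ has image in a compact subgroup of $G_2(\CC)$, so $r_7 \circ \CL_g(\pi)$ is a bounded representation of $W_K$; the defining relation $\phi(w) N \phi(w)^{-1} = |w| N$ then forces any compatible monodromy operator $N$ to vanish (alternatively, one reads this off the case-by-case description of $\Psi(\sigma)$ in the proof of Proposition \ref{prop_irreducibility} as an isobaric sum of supercuspidal representations, possibly together with the trivial character of $GL(1, K)$). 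It follows that $\operatorname{rec}_K(\Psi(\sigma)) = \operatorname{rec}_K(\Psi(\sigma'))$, so local Langlands for $GL(7, K)$ gives $\Psi(\sigma) \cong \Psi(\sigma')$, and the injectivity of $\Psi$ on generic representations of $Sp(6, K)$ (Theorem \ref{thm_Arthur}(1)) yields $\sigma \cong \sigma'$.

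Finally I apply Clifford theory. The image of $Sp(6, K)$ in $PSp(6, K)$ is a normal subgroup with quotient $K^\times/K^{\times 2}$, via the connecting map associated to the central isogeny $Sp(6) \to PSp(6)$ with kernel $\mu_2$. Both $\theta_7(\pi)$ and $\theta_7(\pi')$ are irreducible representations of $PSp(6, K)$ whose restrictions to this normal subgroup contain the common constituent $\sigma$, so by Frobenius reciprocity both embed into $\operatorname{Ind}_{Sp(6, K)}^{PSp(6, K)} \sigma$; standard Clifford theory then produces a character $\omega$ of $PSp(6, K)/Sp(6, K)$ with $\theta_7(\pi) \cong \theta_7(\pi') \otimes \omega$, and such a character is automatically quadratic since its target has exponent $2$. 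The main obstacle is this last step: a careful bookkeeping of which irreducible summands of $\operatorname{Ind}_{Sp(6, K)}^{PSp(6, K)} \sigma$ actually occur in $\theta_7(\pi)$ and $\theta_7(\pi')$, for which one can appeal to the analysis of the restriction from $PSp(6, K)$ to $Sp(6, K)$ by Xu in \cite{Xu} to which the paper already refers.
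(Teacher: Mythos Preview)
Your proof is correct and follows the same approach as the paper: deduce $\sigma \cong \sigma'$ from the injectivity of $\Psi$ (Theorem \ref{thm_Arthur}(i)), then invoke Xu's analysis of restriction from $PSp(6,K)$ to $Sp(6,K)$ to obtain the twist. You supply more detail than the paper at two points---why $\operatorname{rec}_K(\Psi(\sigma))$ is already semisimple (the paper passes over this silently), and the Clifford-theoretic content behind \cite[Corollary 6.4]{Xu}---but the structure of the argument is essentially identical.
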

\begin{proof}
	Let $\sigma$ be the unique generic constituent of the restriction of $\theta_7(\pi)$ to $Sp(6, K)$, and define $\sigma'$ similarly. Then our hypothesis implies $\Psi(\sigma) = \Psi(\sigma')$, hence (Theorem \ref{thm_Arthur}) $\sigma = \sigma'$. Then \cite[Corollary 6.4]{Xu} shows the existence of a character $\omega$ with the claimed property. 
\end{proof}
\begin{corollary}\label{cor_fibres}
	Let $\rho \in \CG^0(G_2,K)$. Then the following are equivalent:
	\begin{enumerate}
		\item $\CL_g^{-1}(\rho)$ is a singleton.
		\item There exists $\pi \in \CA^0_g(G_2,K)$ such that $\CL_g(\pi) = \rho$,
		with the following property: for any non-trivial character $\omega : PSp(6, K) \to \CC^\times$, either (a) $\theta_7(\pi) \cong \theta_7(\pi) \otimes \omega$ or (b) $\theta_7(\pi) \otimes \omega$ is not of the form $\theta_7(\pi')$ for any $\pi' \in \CA^0_g(G_2,K)$.
	\end{enumerate}
\end{corollary}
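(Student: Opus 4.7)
The plan is to derive the equivalence as a formal consequence of Lemma \ref{lem_twisting}, using two auxiliary facts. First, $Sp(6, K)$ is perfect (as the $K$-points of a simply connected semisimple group over an infinite field), so any character $\omega : PSp(6, K) \to \CC^\times$ becomes trivial upon restriction to $Sp(6, K)$. Second, the assignment $\pi \mapsto \theta_7(\pi)$ is injective on $\CA^0_g(G_2, K)$; this is Howe duality for the exceptional theta correspondence, and is the only non-formal input in the argument.

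For the direction (i) $\Rightarrow$ (ii), I would take $\pi$ to be the unique element of $\CL_g^{-1}(\rho)$. Suppose $\omega$ is a non-trivial character of $PSp(6, K)$ and that $\theta_7(\pi) \otimes \omega \cong \theta_7(\pi')$ for some $\pi' \in \CA^0_g(G_2, K)$. Restricting both sides to $Sp(6, K)$ kills $\omega$, so the unique generic $Sp(6, K)$-constituents of $\theta_7(\pi)$ and $\theta_7(\pi')$ coincide. By the injectivity in Theorem \ref{thm_Arthur}(i), this forces $\CL_g(\pi') = \CL_g(\pi) = \rho$, hence $\pi' = \pi$ by (i), and we obtain $\theta_7(\pi) \otimes \omega \cong \theta_7(\pi)$, which is case (a) of (ii).

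For the direction (ii) $\Rightarrow$ (i), I would take $\pi$ as in (ii) and let $\pi' \in \CL_g^{-1}(\rho)$ be arbitrary. Lemma \ref{lem_twisting} supplies a quadratic character $\omega$ of $PSp(6, K)$ with $\theta_7(\pi') \cong \theta_7(\pi) \otimes \omega$. If $\omega$ is trivial this already gives $\theta_7(\pi') \cong \theta_7(\pi)$; if $\omega$ is non-trivial, case (b) of (ii) fails by construction (as $\theta_7(\pi) \otimes \omega$ is realized as $\theta_7(\pi')$), so case (a) of (ii) holds and we again obtain $\theta_7(\pi') \cong \theta_7(\pi)$. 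Injectivity of the theta lift then yields $\pi = \pi'$, so the fiber is the singleton $\{\pi\}$.

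The main obstacle is the Howe duality step, namely the injectivity of $\pi \mapsto \theta_7(\pi)$ on generic supercuspidals of $G_2(K)$. This is not formal and must be drawn from the literature on the exceptional theta correspondence, in particular the work of Gan and Savin referred to in the paper; everything else is bookkeeping around Lemma \ref{lem_twisting}, Theorem \ref{thm_Arthur}, and the perfectness of $Sp(6, K)$.
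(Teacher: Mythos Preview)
Your proposal is correct and follows essentially the same route as the paper. Both directions hinge on Lemma~\ref{lem_twisting} together with the injectivity of $\pi \mapsto \theta_7(\pi)$ on $\CA^0_g(G_2,K)$; the paper cites \cite[Theorem 4.7]{SWe} for this injectivity rather than the Gan--Savin Howe duality you invoke, and in (ii)$\Rightarrow$(i) the paper compares two arbitrary fibre elements $\pi',\pi''$ to each other (via the fixed $\pi$) whereas you compare each directly to $\pi$---but these are cosmetic differences. One small point: in your (i)$\Rightarrow$(ii), once the generic $Sp(6,K)$-constituents agree you get $\CL_g(\pi')=\CL_g(\pi)$ directly from the definition of $\CL_g$, so the appeal to the injectivity in Theorem~\ref{thm_Arthur}(i) is unnecessary (though harmless).
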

\begin{proof}
	Suppose that (i) holds, and choose any $\pi$ with $\CL_g(\pi) = \rho$. If $\pi' \in \CA^0_g(G_2,K)$ and $\theta_7(\pi) \otimes \omega \cong \theta_7(\pi')$ then (as $\CL_g$ is injective) we have $\pi = \pi'$, hence $\theta_7(\pi) \cong \theta_7(\pi) \otimes \omega$.
	
	Suppose instead that (ii) holds, and let $\pi', \pi'' \in \CA^0_g(G_2,K)$ be representations with $\CL_g(\pi') = \CL_g(\pi'') = \rho$. Let $\pi \in \CA^0_g(G_2,K)$ be the given representation with $\CL_g(\pi) = \rho$. Thus there exist characters $\omega', \omega''$ such that $\theta_7(\pi') \cong \theta_7(\pi) \otimes \omega'$, $\theta_7(\pi'') \cong \theta_7(\pi) \otimes \omega''$. Consideration of possibilities (a) and (b) shows that $\theta_7(\pi) = \theta_7(\pi')$. By symmetry, we also have $\theta_7(\pi) = \theta_7(\pi'')$, hence $\theta_7(\pi') = \theta_7(\pi'')$, hence $\pi' = \pi''$ by \cite[Theorem 4.7]{SWe}. 
\end{proof}
We first dispense with the simplest case.
\begin{proposition}\label{prop_not_supercuspidal}
	Let $\pi, \pi' \in \CA^0_g(G_2,K)$, and suppose that $\CL_g(\pi) = \CL_g(\pi')$. Suppose further that $\theta_7(\pi)$ is not supercuspidal. Then $\pi = \pi'$. 
\end{proposition}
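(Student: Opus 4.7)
My plan is to combine Lemma \ref{lem_twisting} with the perfectness of $PSp(6,K)$ to force the twist character to be trivial, and then invoke the injectivity result of Savin--Weissman in the non-supercuspidal case.

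First, by Lemma \ref{lem_twisting}, there exists a quadratic character $\omega : PSp(6, K) \to \CC^\times$ such that $\theta_7(\pi) \cong \theta_7(\pi') \otimes \omega$. The next step is to show that $\omega$ must be trivial. For this I would note that $Sp(6, K)$ is the group of $K$-points of a split simply connected semisimple algebraic group over the infinite field $K$, and hence (by a classical theorem of Steinberg) is its own commutator subgroup. Consequently the quotient $PSp(6, K) = Sp(6, K)/\{\pm I\}$ is also perfect, and therefore admits no non-trivial continuous character to $\CC^\times$. This forces $\omega = 1$ and gives $\theta_7(\pi) \cong \theta_7(\pi')$.

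With this isomorphism in hand, I would complete the proof by invoking \cite[Theorem 4.7]{SWe} of Savin--Weissman, which asserts that the assignment $\pi \mapsto \theta_7(\pi)$ is injective in the regime where $\theta_7(\pi)$ is non-supercuspidal. This is precisely where the standing hypothesis on $\theta_7(\pi)$ enters. The conclusion $\pi = \pi'$ follows.

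The main step is the perfectness argument ruling out $\omega$, which is essentially formal once Lemma \ref{lem_twisting} is in hand; the substantive input is the Savin--Weissman theorem which handles the non-supercuspidal half of the fibre problem. The truly hard part of the full injectivity result is therefore not the present proposition but the complementary supercuspidal case, which requires the global automorphy input developed earlier in the paper (in contrast to the purely local argument here).
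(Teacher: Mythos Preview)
Your argument contains a genuine error: the group denoted $PSp(6)$ in this paper is the \emph{projective similitude group} $PGSp(6)$, not the quotient $Sp(6)/\{\pm I\}$. This is visible throughout the paper (the dual pair in $E_7$ is $(G_2, PGSp(6))$; the paper speaks of ``restriction from $PSp(6,K)$ to $Sp(6,K)$'' as a genuinely problematic step; and in the proof of Proposition~\ref{prop_global_data_implies_injectivity} the quotient $PSp(6,F_v)/Sp(6,F_v)$ is identified with $F_v^\times/(F_v^\times)^2$). The abelianization of $PGSp(6,K)$ is $K^\times/(K^\times)^2$ via the similitude character, so for $p$-adic $K$ there \emph{are} nontrivial quadratic characters $\omega$. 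Your perfectness argument therefore fails, and Lemma~\ref{lem_twisting} by itself does not force $\theta_7(\pi)\cong\theta_7(\pi')$. Indeed, if Lemma~\ref{lem_twisting} already implied $\omega=1$, the entire machinery of Corollary~\ref{cor_fibres} and Lemma~\ref{lem_shahidi_criterion} would be superfluous.

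The paper's proof proceeds instead by exploiting the explicit shape of $\theta_7(\pi)$ in the non-supercuspidal case. By \cite[Theorem 3.9]{SWe}, $\theta_7(\pi)$ is the generic subquotient of a parabolic induction $i_{Q_3}^{PSp(6)}\tau$ from the Siegel parabolic, with $\tau$ a supercuspidal representation of $GL(3,K)$ of \emph{trivial} central character; similarly for $\theta_7(\pi')$ with some $\tau'$. Twisting by a nontrivial $\omega$ replaces $\tau$ by $\tau\otimes\omega$, whose central character is $\omega^3=\omega\neq 1$. Uniqueness of supercuspidal support then makes $\theta_7(\pi)\otimes\omega\cong\theta_7(\pi')$ impossible, since $\tau'$ and $(\tau')^\vee$ both have trivial central character. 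This forces $\omega$ trivial and hence $\theta_7(\pi)\cong\theta_7(\pi')$, after which \cite[Theorem 4.7]{SWe} gives $\pi=\pi'$. (That last theorem is the injectivity of $\pi\mapsto\theta_7(\pi)$ for all generic supercuspidal $\pi$, not only in the non-supercuspidal regime as you stated.)
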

\begin{proof}
	Our hypotheses imply that $\theta_7(\pi')$ is also not supercuspidal. By \cite[Theorem 3.9]{SWe} there exist supercuspidal representations $\tau, \tau'$ of $GL(3, K)$ of trivial central character such that $\theta_7(\pi)$ is the generic subquotient of $i_{Q_3}^{PSp(6)} \tau$, and similarly for $\theta_7(\pi')$ (and we use normalized induction). The existence of an isomorphism $\theta_7(\pi) \otimes \omega \cong \theta_7(\pi')$ implies that $\tau \otimes \omega$, $\tau'$ are conjugate under the stabilizer in the Weyl group of $PSp(6)$ of the Levi subgroup of $Q_3$ (uniqueness of supercuspidal support). This in turn implies that $\tau \otimes \omega$ is isomorphic to one of $\tau'$ or $(\tau')^\vee$. This is a contradiction, because the central character of $\tau \otimes \omega$ is non-trivial. 
\end{proof}
\begin{lemma}\label{lem_shahidi_criterion}
	To show that $\CL_g$ is injective, it is enough to prove the following statement:
	\begin{itemize}
		\item Let $\rho \in \CG^0(G_2,K)$ be a parameter which remains irreducible in $SO(7)$. Then there exists a representation $\pi \in \CA^0_g(G_2,K)$ such that $\CL_g(\pi) = \rho$ and for any character $\omega : PSp(6, K) \to \CC^\times$ such that $\theta_7(\pi) \not\cong \theta_7(\pi) \otimes \omega$, the Shahidi $L$-function $L(\theta_7(\pi) \otimes \omega, Spin, s)$ is holomorphic at $s = 0$.
	\end{itemize}
\end{lemma}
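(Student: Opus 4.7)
The plan is to reduce injectivity of $\CL_g$ to verifying Corollary \ref{cor_fibres}(ii) in the $SO(7)$-irreducible case, and then to obstruct the existence of a problematic twist $\theta_7(\pi) \otimes \omega \cong \theta_7(\pi')$ via a pole of the Shahidi Spin $L$-function for theta lifts from $G_2$. First I would invoke Proposition \ref{prop_not_supercuspidal} to dispense with the case that $\theta_7(\pi)$ is not supercuspidal; combined with Proposition \ref{prop_supercusp}, this reduces matters to showing $\CL_g^{-1}(\rho) = \{\pi\}$ for those $\rho \in \CG^0(G_2,K)$ which remain irreducible in $SO(7)$. For such $\rho$, let $\pi$ be the representation provided by the hypothesis of the lemma; I will verify condition (ii) of Corollary \ref{cor_fibres} for this $\pi$.

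Fix a non-trivial character $\omega : PSp(6,K) \to \CC^\times$. If $\theta_7(\pi) \cong \theta_7(\pi) \otimes \omega$ we are in case (a) of Corollary \ref{cor_fibres}(ii); so suppose $\theta_7(\pi) \not\cong \theta_7(\pi) \otimes \omega$, and further suppose for contradiction that $\theta_7(\pi) \otimes \omega \cong \theta_7(\pi')$ for some $\pi' \in \CA^0_g(G_2,K)$. The $L$-group of $PSp(6)$ is $Spin(7)$, and the $8$-dimensional Spin representation of $Spin(7)$, restricted along the natural embedding $G_2 \hookrightarrow Spin(7)$, decomposes as $\mathbf{1} \oplus \mathbf{7}$. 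Since the Langlands parameter of $\theta_7(\pi')$ coincides with $r_7 \circ \CL_g(\pi')$ and therefore factors through $G_2$ by construction, compatibility of the Shahidi Spin $L$-function with Artin factorization (via the local Langlands correspondence for $GL(7)$ and the characterizing identity of local $L$- and $\epsilon$-factors used to define $\Psi$ in \cite{CKPS}) yields
\[ L(s, \theta_7(\pi'), \mathrm{Spin}) = (1 - q_K^{-s})^{-1} \cdot L(s, r_7 \circ \CL_g(\pi')). \]
Local Artin $L$-functions are reciprocals of polynomials in $q_K^{-s}$ with constant term $1$, so have no zeros, while $(1 - q_K^{-s})^{-1}$ has a simple pole at $s = 0$. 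Hence $L(s, \theta_7(\pi) \otimes \omega, \mathrm{Spin}) = L(s, \theta_7(\pi'), \mathrm{Spin})$ has a pole at $s = 0$, contradicting the hypothesis. Therefore $\theta_7(\pi) \otimes \omega$ is not of the form $\theta_7(\pi')$, verifying case (b).

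The main obstacle will be justifying the displayed factorization of the Shahidi Spin $L$-function according to the group-theoretic decomposition $\mathrm{Spin}|_{G_2} = \mathbf{1} \oplus \mathbf{7}$ on the Galois side. This is a compatibility between Shahidi's $L$-function (defined via an exceptional embedding such as $PSp(6) \subset E_7$) and the Artin $L$-functions of the attached Langlands parameters; for generic representations of $PSp(6)$ it should follow from the characterization of $\Psi$ by local $L$- and $\epsilon$-factors of pairs in \cite{CKPS}, but some care is needed to see that the pole at $s=0$ survives after passing through the Shahidi formalism.
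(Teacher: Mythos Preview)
Your overall structure---reducing via Proposition \ref{prop_not_supercuspidal} and Proposition \ref{prop_supercusp} to the $SO(7)$-irreducible case and then verifying condition (ii) of Corollary \ref{cor_fibres}---matches the paper. The gap, which you yourself flag as the ``main obstacle'', lies in the displayed factorization, and your proposed resolution via \cite{CKPS} does not close it.

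The equality $L(\theta_7(\pi'),\mathrm{Spin},s) = (1-q_K^{-s})^{-1} L(s, r_7\circ\CL_g(\pi'))$ would follow if one knew that Shahidi's Spin $L$-function for $\theta_7(\pi')$ agrees with the Artin $L$-function of $R\circ\CL_g(\pi')$ composed with the $8$-dimensional spin representation of $Spin(7)$. But the lift $\Psi$ of \cite{CKPS} is characterized by $L$- and $\epsilon$-factors of \emph{pairs} with $GL(m)$; this controls only the standard ($7$-dimensional vector) $L$-function attached to the $SO(7)$-parameter, not the Spin factor, which is defined via the realization of $GSp(6)$ as a Levi subgroup of $F_4$ (cf.\ \cite[\S5.1]{SWe}) and is not a Rankin--Selberg factor of $GL(n)\times GL(m)$ type. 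No local Langlands correspondence for $PSp(6)$ compatible with the Spin $L$-function is available at this point in the argument, so the step is unjustified (and assuming it would be close to circular).

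The paper instead invokes \cite[Theorem 5.10]{SWe} and \cite[Proposition 4.6]{SWe}. These establish directly, by representation-theoretic means---Shahidi's theory relating poles of $L$-functions to reducibility of parabolically induced representations inside $F_4$, together with the analysis of the exceptional theta correspondence---that a supercuspidal representation of $PSp(6,K)$ lying in the image of $\theta_7$ from $\CA^0_g(G_2,K)$ has a pole of $L(\cdot,\mathrm{Spin},s)$ at $s=0$. No identification with an Artin factor is needed.
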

The Shahidi $L$-function is the one defined in \cite[\S 5.1]{SWe} using the realisation of $GSp(6)$ as a Levi subgroup of $F_4$.
\begin{proof}
	By Proposition \ref{prop_not_supercuspidal}, it's enough to show that the condition (ii) of Corollary \ref{cor_fibres} holds for parameters $\rho$ which are irreducible in $SO(7)$. Then \cite[Theorem 5.10]{SWe} and \cite[Proposition 4.6]{SWe} together show that the holomorphy of $L(\theta_7(\pi) \otimes \omega, Spin, s)$ at $s = 0$ implies that $\theta_7(\pi) \otimes \omega$ does not have the form $\theta_7(\pi')$ for any $\pi' \in \CA^0_g(G_2,K)$.
\end{proof}
\begin{proposition}\label{prop_global_data_implies_injectivity}
	To show that $\CL_g$ is injective, it is enough to construct for each $\rho \in \CG^0(G_2,K)$ which remains irreducible in $SO(7)$ the following data:
	\begin{enumerate}
		\item A totally real field $F$, together with a non-empty set $\Sigma$ of $p$-adic places of $F$ and for each $v \in \Sigma$ an isomorphism $F_v \cong K$.
		\item A cuspidal, globally generic automorphic representation $\pi$ of $G_2(\ad_F)$ which is unramified outside $\Sigma$ and discrete series at infinity.
		\item For each character $\omega : PSp(6, K) \to \CC^\times$, a character 
		$$\Omega : PSp(6, F) \backslash PSp(6, \ad_F) \to \CC^\times$$ which is unramified away from $\Sigma$ and satisfies $\Omega|_{PSp(6, F_v)} = \omega$ for each $v \in \Sigma$.
	\end{enumerate}
	with the following property:
	\begin{enumerate}
		\item[(iv)] Let $\Psi$ be the lift of $\pi$ to $GL(7, \ad_F)$. Then $\Psi$ is cuspidal and there exists a prime $\ell \neq p$ and an isomorphism $\iota : \overline{\QQ}_\ell \to \CC$ such that $r_\iota(\Psi)|_{W_{F_v}} \cong \iota^{-1} r_7 \circ \rho$ for all $v \in \Sigma$.
	\end{enumerate}
\end{proposition}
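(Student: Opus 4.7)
The plan is to verify the hypothesis of Lemma \ref{lem_shahidi_criterion} using the global data. Given $\rho \in \CG^0(G_2,K)$ irreducible in $SO(7)$ with associated global data $(F, \Sigma, \pi, \{\Omega_\omega\})$, fix a place $v \in \Sigma$ and take $\pi_0 := \pi_v$. The identification $\CL_g(\pi_0) = \rho$ is immediate from condition (iv), the compatibility $\theta_7(\pi)_v \cong \theta_7(\pi_0)$ supplied by Theorem \ref{globalGRS}, and the definition of $\CL_g$ via the local Langlands correspondence for $GL(7)$.

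The main work is to verify, for each character $\omega$ of $PSp(6, K)$ with $\theta_7(\pi_0) \not\cong \theta_7(\pi_0) \otimes \omega$, that the Shahidi Spin L-function $L(s, \theta_7(\pi_0) \otimes \omega, \mathrm{Spin})$ is holomorphic at $s = 0$. We would do this by globalizing: take $\Omega = \Omega_\omega$ as in condition (iii). Using condition (iv), the Galois parameter of $\theta_7(\pi)$ factors through $G_2 \subset \mathrm{Spin}(7)$, and the restriction identity $\mathrm{Spin}|_{G_2} = \mathbf{1} \oplus r_7$ for the $8$-dimensional spin representation then produces the global factorization
$$L(s, \theta_7(\pi) \otimes \Omega, \mathrm{Spin}) = L(s, \Omega) \cdot L(s, \Psi \otimes \Omega),$$
where $\Psi$ is the cuspidal $GL(7)$-lift from (iv). The first factor is an entire Hecke L-function, since $\Omega$ is a non-trivial (in particular, its local component $\omega$ at $v$ is non-trivial) quadratic Hecke character. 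The second is entire by Jacquet--Shalika, provided $\Psi \otimes \Omega \not\cong \Psi$; this non-isomorphism is forced by the local non-isomorphism at $v$, for otherwise we would have $\Psi_v \otimes \omega \cong \Psi_v$, contradicting $\theta_7(\pi_0) \otimes \omega \not\cong \theta_7(\pi_0)$ in view of the way $\CL_g$ is defined.

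To descend to local holomorphy at $v$, we decompose the completed global L-function as a product of local Shahidi L-factors. Each local factor is a reciprocal of a polynomial in $q_w^{-s}$ at finite places, and a product of Gamma functions at archimedean places; neither type has zeros. Consequently, global entireness at $s = 0$ forces every local factor to be holomorphic at $s = 0$. In particular $L_v(s, \theta_7(\pi_0) \otimes \omega, \mathrm{Spin})$ is holomorphic at $s = 0$, which together with Lemma \ref{lem_shahidi_criterion} completes the injectivity of $\CL_g$.

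The main obstacle will be justifying the factorization of the Shahidi Spin L-function on the nose, which requires matching Shahidi's analytic definition (via intertwining operators on the $F_4$-realization of $GSp(6)$, as in \cite{SWe}) with the Galois-theoretic Euler product at all places, notably at the archimedean and ramified finite places. At unramified places this matching is immediate from the Satake parameter computations and the decomposition $\mathrm{Spin}|_{G_2} = \mathbf{1} \oplus r_7$; extending it to all places is the delicate point and may require invoking Shahidi's multiplicativity of $\gamma$-factors together with the known ramification type of $\theta_7(\pi) \otimes \Omega$.
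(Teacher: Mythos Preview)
Your descent step from global to local holomorphy is invalid. The implication ``the completed $L$-function is entire and each local factor is zero-free, hence each local factor is holomorphic at $s=0$'' fails because the Euler product only converges for $\operatorname{Re}(s)\gg 0$; outside that region there is no identity expressing $L(s)$ as a product of local values, so a pole of one local factor can perfectly well be absorbed by a zero of the remaining (analytically continued) partial product. Concretely, even if the complete Shahidi Spin $L$-function factored as $L(s,\Omega)\,L(s,\Psi\otimes\Omega)$ at \emph{every} place, the local factor at $v\in\Sigma$ would be $L_v(s,\omega)\,L_v(s,(r_7\circ\rho)\otimes\omega)$, and the second piece has a pole at $s=0$ precisely when $(r_7\circ\rho)\otimes\omega$ contains the trivial representation of $W_K$. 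Ruling this out is not automatic: it requires Xu's criterion \cite[Corollary 4.2]{Xu} together with the group-theoretic Lemma~\ref{lem_G_2_conjugacy} (that if $(r_7\circ\rho)\otimes\omega$ has a fixed vector then $R\circ\rho$ and $(R\circ\rho)\otimes\omega$ are $Spin(7)$-conjugate). Your argument never confronts this, and your side remark about $\Psi\otimes\Omega\not\cong\Psi$ is a red herring: $L(s,\Psi\otimes\Omega)$ is entire for any cuspidal $\Psi$ on $GL(7)$ by Godement--Jacquet, regardless of that isomorphism.

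The paper avoids both problems --- the unavailable factorization at $\Sigma\cup\infty$ and the illegitimate global-to-local step --- by working instead with Shahidi's crude functional equation (\ref{eqn_gamma_factors}), which expresses the product of local $\gamma$-factors at $\Sigma\cup\infty$ as a ratio of \emph{partial} (hence unramified, hence factorable) $L$-functions. Shahidi's \cite[Proposition 7.3]{Sh} gives that the supercuspidal $\gamma$-factor at $v_0$ vanishes at $s=0$ exactly when the local Spin $L$-factor has a pole there, so the task becomes showing non-vanishing of the $\gamma$-product at infinitely many points on the imaginary axis. Rewriting the partial $L$-ratio via the standard functional equations for $L^\Sigma(\Omega,\cdot)$ and $L^\Sigma(\Psi\otimes\Omega,\cdot)$ reduces everything to controlling finitely many local standard $L$-factors, and ultimately to the same statement about $(r_7\circ\rho)\otimes\omega$, which is then handled by Lemma~\ref{lem_G_2_conjugacy}.
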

\begin{proof}
	Recall that $\theta_7(\pi)$ is the globally generic, cuspidal lift of $\pi$ to $PSp(6, \ad_F)$, and $\Psi$ is the lift of the globally generic constituent of $\theta_7(\pi)|_{Sp(6, \ad_F)}$ to $GL(7, \ad_F)$. Fix $v_0 \in \Sigma$, and let $\omega : PSp(6, F_{v_0}) \to \CC^\times$ be a character such that $\theta_7(\pi_{v_0}) \not\cong \theta_7(\pi_{v_0}) \otimes \omega$. We must show that the Shahidi L-function $L(\theta_7(\pi_{v_0}) \otimes \omega, Spin, s)$ is holomorphic at $s = 0$ (as then the criterion of Lemma \ref{lem_shahidi_criterion} will be satisfied with $\CL_g(\pi_{v_0}) = \rho$). We note that $\theta_7(\pi_{v})$ is supercuspidal for each $v \in \Sigma$, by Proposition \ref{prop_supercusp}.
	
Invoking \cite[Theorem 3.5]{Sh}, we have an identity
\begin{equation}\label{eqn_gamma_factors} \gamma(\theta_7(\pi)_\infty \otimes \Omega_\infty, s) \prod_{v \in \Sigma} \gamma(\theta_7(\pi_v) \otimes \omega, s) = \frac{L^\Sigma(\theta_7(\pi) \otimes \Omega, Spin, s)}{L^\Sigma(\theta_7(\pi) \otimes \Omega, Spin, 1-s)}, \end{equation}
where the $\gamma$-factors are those of \emph{loc. cit.} and $L^\Sigma$ denotes the prime-to-$\Sigma$-and-$\infty$ $L$-function, which is therefore a product of unramified local $L$-factors. 

By \cite[Proposition 7.3]{Sh}, the factors $\gamma(\theta_7(\pi_v) \otimes \omega, s)$ ($v \in \Sigma$) are rational functions of $q_v^{-s}$ which are holomorphic at $s = 0$, and which vanish at $s = 0$ if and only if $L(\theta_7(\pi_v) \otimes \omega, Spin, s)$ has a pole at $s = 0$. To prove the proposition, it will therefore be enough for us to show that $\prod_{v \in \Sigma} \gamma(\theta_7(\pi_v) \otimes \omega, s)$ does not vanish at the point $s = 0$. The zeroes and poles of $\gamma(\theta_7(\pi)_\infty \otimes \Omega_\infty, s)$ lie on finitely many lines parallel to the real axis, so it is even enough for us to show that the expression in (\ref{eqn_gamma_factors}) does not vanish at infinitely many points of the form $s = 2 \pi i k / \log q_{v_0}$ ($k \in \mathbb{Z}$). This is what we will now do.

For any place $v$ of $F$, we may identify the quotient $PSp(6, F_v) / Sp(6, F_v)$ with $F_v^\times / (F_v^\times)$; we may further identify $\Omega$ with a quadratic character of $\GL_7(\ad_F)$, by composition with the determinant. Using the computation of unramified $L$-functions, the right-hand side of (\ref{eqn_gamma_factors}) equals the quotient
	\[ \frac{L^\Sigma(\Psi \otimes \Omega, s) L^\Sigma(\Omega, s)}{L^\Sigma(\Psi \otimes \Omega, 1-s) L^\Sigma(\Omega, 1-s)} \]
	of standard $L$-functions. Using the functional equation for these standard $L$-functions, we find that this equals
	\[ \prod_{v \in \Sigma} \frac{L_v(\Psi \otimes \Omega, 1-s) L_v(\Omega, 1- s)}{L_v(\Psi \otimes \Omega, s) L_v(\Omega, s)}, \]
	up to product with a meromorphic function with all of its zeroes and poles on the real axis. Since $\omega$ is non-trivial, each factor $L_v(\Omega, 1-s) / L_v(\Omega, s)$ is holomorphic on the line $Re(s) = 0$, with zeroes only possible at the points of $(\log q_v)^{-1} ( \pi i + 2 \pi i \mathbb{Z})$. If $v \in \Sigma$ then the representation $\CL_g(\pi_v)$ has finite image, so $L_v(\Psi \otimes \Omega, 1-s) = L_v( (r_7 \circ \rho) \otimes \omega, 1-s)$ is  holomorphic and non-vanishing on the line $Re(s) = 0$. We are therefore reduced to showing that 
	\[ \prod_{v \in \Sigma} L_v(\Psi \otimes \Omega, s) = L_{v_0}((r_7 \circ \rho) \otimes \omega, s)^{| \Sigma |} \]
	does not have poles at infinitely many points of the form $s = 2 \pi i k / \log q_v$ ($k \in \ZZ$). Equivalently, $(r_7 \circ \rho) \otimes \omega$ does not contain the trivial representation of $W_K$.
	
	Now we make use of \cite[Corollary 4.2]{Xu}. Writing $R : G_2 \to Spin(7)$ for the natural homomorphism, it states that $\theta_7(\pi_{v_0}) \cong \theta_7(\pi_{v_0}) \otimes \omega$ if and only if $R \circ \rho$, $(R \circ \rho) \otimes \omega$ are $Spin(7)$-conjugate. We are assuming that this is not the case. Lemma \ref{lem_G_2_conjugacy} below implies that $(r_7 \circ \rho) \otimes \omega$ does not contain the trivial representation; and this completes the proof of the proposition.
\end{proof}
\begin{lemma}\label{lem_G_2_conjugacy}
	Let $\Gamma$ be a group, and let $\rho : \Gamma \to G_2(\CC)$ be a completely reducible representation and $\omega : \Gamma \to \{ \pm 1 \}$ a non-trivial character such that $R \circ \rho$ and $(R \circ \rho) \otimes \omega$ are not $Spin(7)$-conjugate. Then $(r_7 \circ \rho) \otimes \omega$ does not contain the trivial representation.
\end{lemma}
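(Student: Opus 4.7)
The plan is to prove the contrapositive: assuming $(r_7 \circ \rho) \otimes \omega$ contains the trivial representation, I will show that $R \circ \rho$ and $(R \circ \rho) \otimes \omega$ are $Spin(7)$-conjugate.

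The first step is to pass to the $8$-dimensional spin representation $S$ of $Spin(7)$, which is faithful. Since $\rho$ is $G_2$-completely reducible and $r_7$ is a faithful representation of $G_2$, the composite $r_7 \circ \rho$ is a semisimple $\Gamma$-representation; consequently both $R \circ \rho$ and $(R\circ\rho)\otimes\omega$ are $Spin(7)$-completely reducible, and the $Spin(7)$-conjugacy of these two homomorphisms is equivalent to the isomorphism as $\Gamma$-representations of the composites $S \circ (R\circ\rho)$ and $S \circ ((R\circ\rho)\otimes\omega)$.

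Next I would unpack these composites. Using the classical fact $S|_{G_2} \cong \mathbf{1} \oplus V_7$ (which reflects the description of $G_2 \subset Spin(7)$ as the stabilizer of a generic vector in $S$), the first is $\mathbf{1} \oplus (r_7\circ\rho)$. Because the nontrivial central element of $Spin(7)$ acts as $-1$ on the faithful spin representation $S$, the second is $\omega \oplus (r_7\circ\rho)\otimes\omega$. These two $8$-dimensional $\Gamma$-representations are isomorphic iff their characters agree; since the equation is automatic when $\omega(g)=1$, this reduces to the single pointwise identity $\chi_{r_7\circ\rho}(g) = -1$ for each $g$ with $\omega(g) = -1$.

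The main obstacle is this character identity. By hypothesis, for any $g$ with $\omega(g) = -1$ the element $\rho(g) \in G_2(\CC)$ sends the given vector $v$ to $-v$, so $r_7(\rho(g))$ has $-1$ as an eigenvalue on $V_7$. Everything thus reduces to the following group-theoretic claim in $G_2$: \emph{any $x \in G_2(\CC)$ with $-1$ as an eigenvalue on $V_7$ satisfies $\operatorname{tr}(r_7(x)) = -1$.} I would deduce this by first reducing to the semisimple case via the Jordan decomposition (since $G_2$ is reductive, one has $x_s \in G_2$ and $\operatorname{tr}(r_7(x)) = \operatorname{tr}(r_7(x_s))$), then conjugating $x_s$ into a maximal torus $T \subset G_2$ and invoking the fact that the weights of $V_7$ are $0$ together with the six short roots $\pm e_1, \pm e_2, \pm e_3$ subject to $e_1 + e_2 + e_3 = 0$. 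Writing the eigenvalues of $t \in T$ on $V_7$ as $\{1, a^{\pm 1}, b^{\pm 1}, c^{\pm 1}\}$ with $abc = 1$, the condition that $-1$ is an eigenvalue forces (say) $a = -1$ and hence $bc = -1$; the trace identity then drops out of the elementary manipulation $1 + (a + a^{-1}) + (b + b^{-1}) + (c + c^{-1}) = -1$. This torus calculation is the only genuinely nontrivial input, and it is routine once the setup is in place.
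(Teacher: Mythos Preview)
Your argument is correct and takes a genuinely different route from the paper's proof. Both proofs work by contraposition and both exploit the decomposition $S|_{G_2}\cong\mathbf{1}\oplus r_7$ of the spin representation, but they diverge after that. The paper observes that a trivial summand in $(r_7\circ\rho)\otimes\omega$ produces a nonzero fixed vector for $(R\circ\rho)\otimes\omega$ in $S$, invokes the geometric fact that point-stabilizers in $S$ are exactly the $Spin(7)$-conjugates of $R(G_2)$ to conjugate $(R\circ\rho)\otimes\omega$ into $G_2$, and then applies Griess's conjugacy theorem \cite{Gri95} to match it with $\rho$. Your proof bypasses both the stabilizer description and Griess: you reduce $Spin(7)$-conjugacy to an isomorphism of spin representations, and then to the single character identity $\chi_{r_7\circ\rho}(g)=-1$ whenever $\omega(g)=-1$, which you verify by the elementary torus calculation $abc=1,\ a=-1\Rightarrow (b+b^{-1})+(c+c^{-1})=0$. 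This is more self-contained and computational; the paper's argument is more conceptual but imports an external conjugacy result. One small point: your assertion that isomorphism of $S\circ\phi_1$ and $S\circ\phi_2$ forces $Spin(7)$-conjugacy is correct, but the word ``faithful'' alone does not justify it (faithfulness of a single representation need not detect conjugacy in a reductive group). What makes it work here is that $S$ is faithful \emph{and self-dual}, so it tensor-generates the representation category of $Spin(7)$, and then the standard Tannakian/Richardson argument gives conjugacy of completely reducible homomorphisms; it would be worth saying this explicitly.
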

\begin{proof}
	Let $R' : Spin(7) \to GL(8)$ be the spin representation, $R'' : Spin(7) \to GL(7)$ the vector representation. Then $R' \circ R = r_7 \oplus \CC$ and $R'' \circ R = r_7$. In particular, $R' \circ ((R \circ \rho )\otimes \omega) = (r_7 \circ \rho) \otimes \omega \oplus \omega$. We see that $(r_7 \circ \rho) \otimes \omega$ contains the trivial representation if and only if $(R \circ \rho) \otimes \omega$ fixes a non-zero vector in the spin representation.
	
	Since the stabilizers of non-zero vectors in the spin representation are exactly the $Spin(7)$-conjugates of $R(G_2)$, we see that $(r_7 \circ \rho) \otimes \omega$ contains the trivial representation if and only if there exists $g \in Spin(7, \CC)$ such that $\operatorname{Ad}(g) \circ ((R \circ \rho) \otimes \omega)$ is valued in $R(G_2(\CC))$. 
	
	Suppose for contradiction that this is case, and let $\rho' : \Gamma \to G_2(\CC)$ be the homomorphism such that $R \circ \rho' = \operatorname{Ad}(g) \circ ((R \circ \rho) \otimes \omega)$. Then $r_7 \circ \rho' = \operatorname{Ad}(R''(g)) \circ r_7 \circ \rho$. By \cite{Gri95}, this implies that $\rho, \rho'$ are themselves $G_2$-conjugate, hence that $R \circ \rho$ and $R \circ \rho'$ are $R(G_2)$-conjugate, hence that $R \circ \rho$ and $(R \circ \rho) \otimes \omega$ are $Spin(7)$-conjugate. This contradicts our hypothesis.
\end{proof}
To complete the proof of injectivity, it remains to construct data as in Proposition \ref{prop_global_data_implies_injectivity}. Given an irreducible representation $\rho : W_K \to G_2(\CC)$ which remains irreducible in $SO(7)$, Theorem \ref{thm_globalization_of_fixed_parameter} implies the existence of the following data:
\begin{enumerate}
\item A totally real field $F$, together with a non-empty set $\Sigma$ of $p$-adic places of $F$ and for each $v \in \Sigma$ an isomorphism $F_v \cong K$.
\item A prime $\ell \neq p$, an isomorphism $\iota : \overline{\QQ}_\ell \to \CC$, and a homomorphism $\sigma : \Gamma_F \to G_2(\overline{\QQ}_\ell)$ of Zariski dense image such that for each $v \in \Sigma$, $\sigma|_{W_{F_v}}$ is conjugate to $\iota^{-1} \rho$.
	\item A cuspidal, regular algebraic, self-dual automorphic representation $\Psi$ of $GL(7, \ad_F)$ such that $r_\iota(\Psi) \cong r_7 \circ \sigma$ and $\Psi$ is unramified outside $\Sigma$.
\end{enumerate}
We choose for each character $\omega : PSp(6, K) \to \CC^\times$ a globalization $\Omega$ such that for each $v \in \Sigma$, the restriction of $\Omega$ to $PSp(6, F_v)$ equals $\omega$. After making a quadratic base change, split at $\Sigma$, we can assume moreover that each character $\Omega$ is unramified away from $\Sigma$.

Applying Theorem \ref{HLiu}, we obtain a cuspidal, globally generic representation $\pi$ of $G_2(\ad_F)$, unramified outside $\Sigma$ and discrete series at infinity. We have now constructed all of the required data.
\section{Final remarks}
 It is possible to use the same strategy of passing to $GL(7)$, combined with properties of $L$-functions, to show that no pure generic supercuspidal representation of $G_2$ is {\it incorrigible}, in the sense of \cite{H18}.  But this can also be derived from the dichotomy of Savin and Weissman \cite{SWe} and any proof using $L$-functions ultimately reduces to the dichotomy property.

%\newpage 
\appendix
\section{Genericity of a lift by Gordan Savin}
%\footnote{Partially supported by NSF grant DMS 1901745.}

 Let $F$ be a global field and $\mathbb A$  its ring of ad\`eles. The goal of the appendix is to show that any generic cuspidal automorphic 
form on $G_2(\mathbb A)$ lifts to a generic automorphic form on $\PGSp_6(\mathbb A)$. 

\subsection{Octonions and $G_2$}

We follow the exposition \cite{SW15} and work over $\mathbb Q$. Let $\mathbb H$ be the algebra of Hamilton quaternions, with the usual basis $\{1,i,j,k\}$. 
The split octonion algebra 
over $\mathbb Q$ is 
$\Oct =\mathbb H \oplus \mathbb H$ with multiplication  
\[ 
(a,b) \cdot (c,d) = (ac+d\bar b, \bar a d+cb). 
\] 
If $x=(a,b)$, let $\bar x= (\bar a, -b)$. 
Then  $x\mapsto \bar{x}$ is  a linear anti-involution of $\Oct$, defining norm and trace maps 
 $$ \N:\Oct\to F\quad x\mapsto x\bar{x} = \bar{x}x,\qquad \Tr:\Oct\to F,\quad x\mapsto x+\bar{x} $$
satisfying
 $$ \N(x\cdot y) = \N(x)\N(y),\qquad \Tr(x\cdot y) = \Tr(y\cdot x), \quad \Tr(x\cdot(y\cdot z)) = \Tr((x\cdot y)\cdot z). $$ 
 %Observe that $\N(x)= \N(a) - \N(b)$, hence the algebra is split as claimed. 
 Let $l=(0,1)$, so $\Oct$ has a basis $\{1,i,j,k,l,li,lj,lk\}$.
The following basis is particularly useful.
\begin{equation}\label{eq:Octbasis}
\begin{array}{c} s_1 = \frac{1}{2}(i+li),\quad s_2 = \frac{1}{2}(j+lj),\quad s_3 = \frac{1}{2}(k+lk),\quad s_4 = \frac{1}{2}(1+l), \\
 t_1 = \frac{1}{2}(i-li),\quad t_2 = \frac{1}{2}(j-lj),\quad t_3 = \frac{1}{2}(k-lk),\quad t_4 = \frac{1}{2}(1-l). \end{array}
\end{equation}
The multiplication table for this basis is given in Table~\ref{table}.
\begin{table}[h] 
$$\begin{array}{||c||c|c|c||c|c|c||c|c|} \hline
 & s_1 & s_2 & s_3 & t_1 & t_2 & t_3 & s_4 & t_4 \\ \hline \hline
s_1 & 0 & -t_3 & t_2 & s_4 & 0 & 0 & 0 & s_1 \\ \hline
s_2 & t_3 & 0 & -t_1 & 0 & s_4 & 0 & 0 & s_2 \\ \hline
s_3 & -t_2 & t_1 & 0 & 0 & 0 & s_4 & 0 & s_3 \\ \hline \hline
t_1 & t_4 & 0 & 0 & 0 & s_3 & -s_2 & t_1 & 0 \\ \hline
t_2 & 0 & t_4 & 0 & -s_3 & 0 & s_1 & t_2 & 0 \\ \hline
t_3 & 0 & 0 & t_4 & s_2 & -s_1 & 0 & t_3 & 0 \\ \hline \hline
s_4 & s_1 & s_2 & s_3 & 0 & 0 & 0 & s_4 & 0 \\ \hline
t_4 & 0 & 0 & 0 & t_1 & t_2 & t_3 & 0 & t_4 \\ \hline
\end{array}$$
\caption{Multiplication Table for Octonions}
\label{table}
\end{table}

\smallskip 

Let $\mathcal R \subset \Oct$ be the $\mathbb Z$-lattice spanned by $s_i$ and $t_i$. It follows, from the multiplication table, that 
$\mathcal R$ is an order.  It is maximal since the determinant of the trace pairing $\Tr(x\cdot y)$
on $\mathcal R$ is 1. Let $\Oct^0$ be the subspace of trace zero elements.  For every subspace $V \subset \Oct^0$, 
let  $V^{\Delta}$ be the subspace of  all $x\in \Oct^0$ such that $x\cdot y=0$ for all $y\in V$. 
A subspace $V\subset \Oct^0$ on which multiplication is trivial is at most 2-dimensional.  
(We call such a subspace a \emph{null space} or a \emph{null subspace}.)  Indeed, let $(i,j,k)$ be a permutation of $(1,2,3)$.  
Then from the multiplication table we see that $\langle s_i\rangle^{\Delta} = \langle s_i,t_j,t_k\rangle$, and the null spaces of $\Oct^0$ which contain $s_i$ are all of the form $\langle s_i,at_j+bt_k\rangle$ for fixed $a,b\in \mathbb Q$.  
Since $G_2$, the group of automorphisms of $\Oct$, acts transitively on (nonzero) elements of trace zero and norm zero, this phenomenon is generic.

\smallskip 
The group $G_2$ has two conjugacy classes of maximal parabolic subgroups, 
and they can be described as the stabilizers of null subspaces in $\Oct^0$.  Let $V_1 \subset V_2$ be $1$ and $2$-dimensional 
null subspaces. Let $V_3= V_1^{\Delta} \supset V_2$.  Let 
$Q_1=L_1 U_1$ and $Q_2=L_2 U_2$ be the stabilizers of $V_1$ and $V_2$, respectively. The Levi factors $L_1$ and $L_2$ are 
isomorphic to $\GL(V_3/V_1)$ and $\GL(V_2)$, respectively. The Borel subgroup  $Q_0=L_0U_0 = Q_1 \cap Q_2$ stabilizes the 
full flag 
\[ 
V_1 \subset V_2 \subset V_3 \subset V_3^{\perp}  \subset V_2^{\perp}  \subset V_1^{\perp}  
\] 
in $\Oct^0$, where $V^{\perp}$ stands for the orthogonal complement of $V$ with respect to the trace pairing.

\subsection{Albert algebra and $E_7$}  This is an exceptional $27$-dimensional Jordan algebra $J$ over $\mathbb Q$.  It is the set of matrices 
\[ 
A=\left(\begin{array}{ccc} 
\gamma & x & \bar y \\
\bar x & \beta & z \\
y & \bar z & \alpha
\end{array}\right)
\] 
where $\alpha,\beta,\gamma\in \mathbb Q$ and $x,y,z\in \mathbb O$. We have a cubic form (the determinant) on $J$
\[ 
 \det A= \alpha\beta\gamma  - \alpha \N(x) - \beta \N(y) - \gamma\N(z) + \Tr(xyz). 
  \] 
 The group of isogenies of the cubic form is a reductive group of type $E_6$.  Its orbits on 
  $J$ are classified by the rank of the matrix $A$.  If $A\neq 0$ then $A$ has rank one if $A^2= \Tr(A) \cdot A$. Explicitly, this means that the entries of $A$ satisfy the equalities 
\[ 
\N(x)= \beta\gamma, \, \N(y)=\gamma\alpha, \, \N(z)=\alpha\beta, \, \alpha\bar x=zy, \, \beta \bar y = x z ,\,  \gamma  \bar z= yx. 
\] 

\smallskip Let $G$ be the split, adjoint group of type $E_7$. This group can be constructed from $J$ by the Koecher-Tits construction, see Section 3 in \cite{KS15}. 
 In particular, $G$ has a  pair of opposite maximal parabolic subgroups $P=MN$ and $\bar P=M\bar N$, $N\cong J$  and $\bar N \cong J$, 
 such that the conjugation action of $M$ on $N$ 
 (this action is faithful since $G$ is adjoint) gives an isomorphism 
of $M$ and the group of similitudes of the cubic form on $J$: 
\[ 
M\cong \{ g\in \GL(J) ~|~ \text{ for some $\lambda\in F^{\times}$,} \det( gA) =\lambda \det(A) \text{ for all $A\in J$}\}.
\]
The action of $M$ on $J$ resulting from the isomorphism 
 $\bar N \cong J$ is dual to the action arising from $N$. 
Observe that $G_2$ acts naturally on $J$ (by acting on off-diagonal entries). This gives an embedding of $G_2$ into $M$.  Let $M_3$,  $N_3$  and $\bar N_3$ 
be the centralizers of $G_2$ in $M$ and $N$, respectively. It is clear that $N_3\cong J_3$ and $\bar N_3\cong J_3$, where $J_3$ is the Jordan algebra of symmetric $3\times 3$-matrices. 
The group $M_3$ is isomorphic to $\GL_3$. This isomorphism is realized by observing that $\GL_3$ acts on $J$ by similitudes 
\[
A\mapsto \det(g)^{-1} g A g^t 
\] 
for all $g\in \GL_3$. Thus the centralizer of $G_2$ in $G$ is $\PGSp_6$ with $P_3=M_3 N_3$ and $\bar P_3=M_3\bar N_3$ a pair of opposite  maximal parabolic subgroups. 

\subsection{Minimal representation of $G$}
 Let $F$ be a local field of characteristic 0. In this section $J$, $G$ etc stand for their sets of $F$-points. 
  Fix $\psi : F \rightarrow \mathbb C^{\times}$, a non-trivial unitary additive character. 
 Every $A\in J$ defines a character  $\psi_A$ of $J$ 
 \[ 
\psi_A(B) = \psi (\Tr(A\circ B)) = \psi_B(A) 
\] 
for all $B\in J$, where $A\circ B$ denotes the Jordan multiplication. Via the isomorphism $N\cong J$ we view $\psi_A$ a character of $N$, 
and every unitary character of $N$ is equal to $\psi_A$ for some $A$.  
Let $\Omega \subseteq J$ be the set of rank one elements in $J$. We view $\Omega \subset \bar N$, where $\bar N$ is the opposite of $N$.  
A unitary model of the minimal representation is $L^2(\Omega)$. Here only the acton of the maximal parabolic $P=MN$ is obvious: Let $n\in N$ correspond to 
$B\in J$ via the isomorphism $N\cong J$. Then, for $f\in  L^2(\Omega)$, 
\[
\pi(n)f(A) = \psi_B(A) \cdot f(A). 
\] 
Any  $m\in M$ acts on  $\bar N$ by conjugation, and therefore on $\Omega \subset J$ via the identification $\bar N\cong J$. Then, for $f\in  L^2(\Omega)$, 
\[ 
\pi(m)f(A) = \chi(m)  f(m^{-1} A) 
\] 
where $\chi$ is a positive character of $M$ that we shall not need. 
 We have the following, see Propositions 7.2 and 8.3 in  \cite{KS15}: 

\begin{theorem} \label{T:minimal} 
 Let $\Pi$ be the subspace of $G$-smooth vectors in the minimal representation. Then $C_c^{\infty} (\Omega) \subset \Pi \subset C^{\infty}(\Omega)$. If $A\in J$ is 
non-zero, then any continuous functional $\ell$ on $\Pi$ such that $\ell( \pi(n) f) = \psi_A(n) \cdot \ell(f)$ for all $n\in N$ and $f\in \Pi$ is equal to a multiple of  the delta 
functional 
\[ 
f\mapsto f(A).
\]  
In particular, $\ell=0$ if $A$ is not rank one. If $F$ is $p$-adic, we moreover have an exact sequence of $P$-modules 
\[ 
0 \rightarrow C_c^{\infty} (\Omega) \rightarrow \Pi \rightarrow \Pi_N \rightarrow 0. 
\] 
 
\end{theorem}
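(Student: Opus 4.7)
The plan is to exploit the concrete realization of $\Pi \subset L^2(\Omega)$ described just before the theorem: the unipotent radical $N \cong J$ acts on $f$ by pointwise multiplication by the characters $\psi_B$ ($B \in J$), and $M$ acts (up to a positive character twist) by pullback along its linear action on $\bar{N} \cong J$ preserving $\Omega$. All three assertions are essentially statements about the $P$-module structure on $L^2(\Omega)$; what I would need to import from \cite{KS15} are the explicit $\bar{N}$- and Weyl-element formulas needed to turn $P$-statements into $G$-statements.

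For (i), the inclusion $C_c^{\infty}(\Omega) \subset \Pi$ follows from the obvious $P$-stability of $C_c^{\infty}(\Omega)$---multiplication by smooth unitary characters preserves compact support and smoothness, and $M$ acts by smooth diffeomorphisms of $\Omega$---combined with the $\bar{N}$-action computed in \cite{KS15}. For $\Pi \subset C^{\infty}(\Omega)$, differentiating the $N$-action shows that iterated derivatives in $N$-directions act by pointwise multiplication by polynomials on $J$, so any $N$-smooth vector must be smooth as a function on $\Omega$.

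The heart of the theorem is (ii). Given a continuous $(N,\psi_A)$-equivariant functional $\ell$ on $\Pi$, its restriction to the $P$-stable subspace $C_c^{\infty}(\Omega)$ is a distribution $T$ on $\Omega$ satisfying
\[ T(\psi_B \cdot f) \;=\; \psi(\Tr(A \circ B)) \cdot T(f) \qquad \text{for every } B \in J. \]
The non-degeneracy of the trace form on $J$ forces the locus $\{x \in \Omega : \Tr(B \circ (x-A)) = 0 \text{ for all } B\}$ to be $\{A\} \cap \Omega$, so $T$ must be supported there. If $A$ is not rank one this intersection is empty and $T = 0$; if $A \in \Omega$ then $T$ is supported at the single point $A$, and the multiplicative transformation rule above rules out contributions from derivatives of $\delta_A$ (these produce extra linear-in-$B$ factors that cannot be balanced), yielding $T = c \cdot \delta_A$. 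Extending this conclusion from $C_c^{\infty}(\Omega)$ to all of $\Pi$ is then either a continuity argument (archimedean case) or a consequence of part (iii) (non-archimedean case). The main obstacle is precisely the exclusion of higher-order delta contributions; this is where the structure of the character $\psi_A$ and the trace pairing on $J$ are indispensable.

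For (iii) in the $p$-adic case, (i) gives the left arrow, and I would verify that $C_c^{\infty}(\Omega)$ lies in the $N$-coinvariant kernel by a partition of unity: given $f \in C_c^{\infty}(\Omega)$ with compact support $K$, at each $x \in K$ one can find $B \in J$ with $\psi_B(x) \neq 1$, so $(\psi_B - 1)$ is locally invertible, allowing $f$ to be written as a finite sum $\sum_i (\pi(n_i) - 1) g_i$ after partitioning. The non-trivial part is the reverse inclusion, namely that every element killed in $\Pi_N$ actually comes from $C_c^{\infty}(\Omega)$; equivalently, that $\Pi / C_c^{\infty}(\Omega)$ carries the trivial $N$-action. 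This is the Jacquet module calculation of \cite[Prop.\ 8.3]{KS15}, and is the main obstacle; once it is in hand, surjectivity of the induced map $\Pi / C_c^{\infty}(\Omega) \to \Pi_N$ is formal.
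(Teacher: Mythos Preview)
The paper does not give a proof of this theorem at all: it is stated with the preface ``We have the following, see Propositions 7.2 and 8.3 in \cite{KS15},'' and no argument is supplied. So there is nothing against which to compare your proposal in the paper itself; the intended ``proof'' is the citation.

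That said, your outline is essentially the standard route and lines up with what one finds in \cite{KS15}. You correctly isolate the one genuinely nontrivial ingredient, namely the identification of $\Pi/C_c^{\infty}(\Omega)$ with $\Pi_N$ (equivalently, that $N$ acts trivially on the quotient), and you attribute it to \cite[Prop.\ 8.3]{KS15}, exactly as the paper does. Your reduction of (ii) to (iii) in the $p$-adic case is also correct: once $\ell$ vanishes on $C_c^{\infty}(\Omega)$ it factors through $\Pi_N$, where $N$ acts trivially, and the nontriviality of $\psi_A$ then forces $\ell=0$; the same reasoning handles $\ell - c\,\delta_A$ when $A$ has rank one. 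The one place your sketch is thin is the archimedean extension step (``a continuity argument''): one needs to know that the Fr\'echet topology on $\Pi$ makes $\delta_A$ continuous and that $C_c^{\infty}(\Omega)$ is dense in an appropriate sense, which is part of the content of \cite[Prop.\ 7.2]{KS15} rather than something automatic. But since the paper itself simply cites that proposition, you are not missing anything the paper supplies.
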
 

The representation $\Pi$ is spherical, and we describe a spherical vector in the $p$-adic case. Let $O$ be the ring of integers in $F$, 
$\varpi\in O$ a uniformizing element and $q$ the order of the residual field. The maximal order $\mathcal R$ in $\Oct$ defines an integral 
structure on $J$, let $J(O)$ be the lattice of $O$-points in $J$. 
 The greatest common divisor of entries of $A\in J(O)$, is the largest power $\varpi^n$ dividing $A$, that is, such that $A/\varpi^n$ is in $J(O)$.  
 We have the following Theorem 6.1 in \cite{SW07}:

\begin{theorem} \label{T:spherical} 
Assume $F$ is a $p$-adic field. 
 Assume the conductor of $\psi$ is $O$.  Then the spherical vector in $\Pi$ is a function $f^{\circ}\in C^{\infty}(\Omega)$ supported in $J(O)$. 
 Its value at $A\in \Omega$ depends on the gcd of entries of $A$. More precisely, if the gcd of $A$ is $\varpi^n$, and $q$ is the order of the residual field, then 
 \[ 
 f^{\circ} (A)= 1+ q^3 + \ldots + q^{3(n-1)}. 
 \] 
 \end{theorem}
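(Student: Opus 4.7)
The plan is to use the unitary model $\Pi \subset C^\infty(\Omega)$ described just before the theorem and to pin down the spherical vector $f^\circ$ by imposing invariance successively under the $O$-points of $N$, $M$ and $\bar N$, using that $G(O)$ is generated by these (together with a suitable Weyl-type element giving the Bruhat-big-cell involution).

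First I would establish the support. Since the action of $N$ in the model is given by $\pi(n) f(A) = \psi_B(A) f(A)$, where $n \in N$ corresponds to $B \in J$ under $N \cong J$, invariance of $f^\circ$ under $N(O) \cong J(O)$ forces $\psi_B(A) = 1$ for every $B \in J(O)$. With the conductor of $\psi$ equal to $O$ and the trace pairing $\Tr(A \circ B)$ making the integral form $J(O)$ self-dual (this is where the self-duality of the maximal order $\mathcal R$ enters), this forces $A \in J(O)$. Hence $\operatorname{supp}(f^\circ) \subseteq \Omega \cap J(O)$.

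Second, $M(O)$-invariance. The action $\pi(m) f(A) = \chi(m) f(m^{-1} A)$ has $\chi$ trivial on $M(O)$, since $\chi$ is a positive character of $M$ and $M(O)$ preserves the lattice $J(O)$. Hence $f^\circ$ is constant on $M(O)$-orbits in $\Omega \cap J(O)$. These orbits I would classify by extracting the gcd: every rank-one $A \in J(O)$ can be written $A = \varpi^n A_0$ with $A_0 \in J(O) \setminus \varpi J(O)$ of rank one, and since $M(F)$ is transitive on $\Omega$ while $M(O)$ preserves divisibility, the orbits are indexed by $n \in \ZZ_{\geq 0}$. Setting $a_n := f^\circ(A)$ whenever $A$ has gcd $\varpi^n$, and normalizing $a_0 = 1$, the theorem reduces to proving the recursion $a_n - a_{n-1} = q^{3(n-1)}$.

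The recursion will come from imposing the remaining invariance, i.e.\ under the opposite unipotent $\bar N(O)$ (equivalently, under the Weyl element that exchanges $N$ and $\bar N$). On the model $C^\infty(\Omega)$ the element $\bar n$ acts by an integral operator whose kernel is of Fourier type, integrating over fibres of addition on $\Omega$; to deduce $\pi(\bar n) f^\circ = f^\circ$ one evaluates the resulting integral on a test point of gcd $\varpi^n$ by slicing $\Omega \cap J(O)$ according to the divisibility stratification, which comes down to counting rank-one elements of $J(O/\varpi^n)$ with prescribed gcd — the relevant count being the number of points of a $\PP^2$-bundle over a piece of the flag variety for $E_6$, whence the factor $q^{3(n-1)}$. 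The main obstacle is precisely this last step: one must justify convergence and carry out the combinatorial count of rank-one elements with prescribed divisibility in $J(O/\varpi^n)$ explicitly. An alternative route, which avoids computing the full integral operator, is to exploit the short exact sequence of $P$-modules in Theorem~\ref{T:minimal}: the quotient $\Pi_N$ is a known degenerate principal-series piece for $M$, and matching $f^\circ \mod C_c^\infty(\Omega)$ against its (easily computed) spherical vector gives a clean inductive derivation of the same recursion.
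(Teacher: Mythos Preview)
The paper does not give its own proof of this statement: the sentence immediately preceding the theorem reads ``We have the following Theorem 6.1 in \cite{SW07}'', and the result is simply quoted from Savin--Woodbury without argument. So there is nothing in the paper to compare your proposal against.

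On the substance of your sketch: the first two reductions are correct and standard. Invariance under $N(O)$ together with self-duality of $J(O)$ for the trace pairing forces the support into $J(O)$, and $M(O)$-invariance (with $\chi$ trivial on the maximal compact) collapses $f^\circ$ to a function of the gcd. The genuine content is your third step, and here you have only a plan rather than a proof: the action of $\bar N$ on the model is not made explicit in the paper (only the $P$-action is written down), so ``an integral operator whose kernel is of Fourier type'' needs to be pinned down before any recursion can be extracted, and the combinatorial count you allude to is exactly the heart of the matter. Your alternative route via the exact sequence $0 \to C_c^\infty(\Omega) \to \Pi \to \Pi_N \to 0$ is closer in spirit to how \cite{SW07} actually proceeds. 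One small caution on bookkeeping: your normalization $a_0 = 1$ is inconsistent with the displayed formula as written, since for gcd $\varpi^0$ the sum $1 + q^3 + \cdots + q^{3(n-1)}$ is empty; check the indexing convention in \cite{SW07} before setting up the recursion.
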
 
 
 \subsection{Local non-vanishing} 
 
 In this section $F$ is a $p$-adic field. Let $U_3 \subset M_3$ be the maximal unipotent subgroup that, via the isomorphism $M_3\cong \GL_3$ given previously, 
  is the group of matrices 
 \[ 
u=\left(\begin{array}{ccc} 
1 & a &  c\\
0 & 1 & b \\
0 & 0 & 1
\end{array}\right). 
\] 
Then $U=U_3 N_3$ is a maximal unipotent subgroup of  $\PGSp_6$. We define a Whittaker character  $\psi_U$ on $U$ as 
follows. For every $u\in U_3$, written as above, $\psi_U(u)=\psi(a+b)$. For every  $u\in N_3$, which we identify with $B\in J_3\cong N_3$, 
$\psi_U(u)=  \psi_E(B)$ where 
\[ 
E= \left(\begin{array}{ccc} 
0 & 0 &  0\\
0 & 0 & 0 \\
0 & 0 & 1
\end{array}\right). 
\] 

The following result was stated in Proposition 17 of \cite{GS04} without a proof. We give details following  Theorem 7.1 in \cite{G}, where a version of this result was proved for groups over finite 
fields.

\begin{theorem}  \label{T:GG} 
Let $\Pi$ be the minimal representation of $G$,  $U$ the maximal unipotent subgroup of $\PGSp_6$ and $\psi_U$ the Whittaker 
character of $U$. Then there exists a maximal unipotent subgroup $U_0$ of $G_2$ and a Whittaker character $\psi_0$ of $U_0$ such that 
\[ 
\Pi_{U,\psi_U} \cong \mathrm{ind}_{U_0}^{G_2} \psi_0 . 
\] 
\end{theorem}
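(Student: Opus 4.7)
My plan is to compute $\Pi_{U,\psi_U}$ by exploiting the semidirect decomposition $U = U_3 \ltimes N_3$, taking coinvariants in two stages via the geometric model of $\Pi$ on $\Omega$ and the $P$-module short exact sequence of Theorem \ref{T:minimal}. The key point is that the model makes the $N_3$-action transparent (multiplication by characters), and then the remaining $U_3$-action becomes a $\GL_3$-action on a concrete subvariety of rank-one elements.

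\emph{Stage one: $(N_3,\psi_E)$-coinvariants.} Apply the right-exact functor $(\cdot)_{N_3,\psi_E}$ to $0 \to C_c^{\infty}(\Omega) \to \Pi \to \Pi_N \to 0$. Because $\Pi_N$ carries the trivial $N$-action while $\psi_E$ is nontrivial on $N_3$, the term $(\Pi_N)_{N_3,\psi_E}$ vanishes, so $\Pi_{N_3,\psi_E}$ is a quotient of $C_c^{\infty}(\Omega)_{N_3,\psi_E}$. Since $n_B\in N_3$ acts on $f\in C_c^{\infty}(\Omega)$ by multiplication by $\psi(\Tr(A\circ B))$, these coinvariants identify with $C_c^{\infty}(\Omega_E)$, where
\[
\Omega_E := \Omega\cap (E + J_3^{\perp})
\]
and $J_3^{\perp}$ is the orthogonal complement of $J_3$ under the trace pairing on $J$. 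Elements of $E + J_3^{\perp}$ have the form of a Hermitian matrix with diagonal $(0,0,1)$ and off-diagonal entries $x,y,z \in \Oct^0$, so the rank-one conditions cut out the explicit subvariety $\{(x,y,z)\in (\Oct^0)^3 : \N(x)=\N(y)=\N(z)=0,\ zy=-x,\ xz=0,\ yx=0\}$.

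\emph{Stage two: $(U_3,\psi_{U_3})$-coinvariants.} The quotient $U_3$ of $U$ acts on $\Omega_E$ through the $\GL_3$-action $A\mapsto gAg^t$ on $J$, and commutes with the natural $G_2$-action on octonion entries. I would show that $G_2 \cdot U_3$ acts transitively on $\Omega_E$ with stabilizer a maximal unipotent subgroup $U_0\subset G_2$ (a dimension count $\dim G_2 + \dim U_3 - \dim U_0 = 14+3-6=11$ matches the expected dimension of $\Omega_E$ as a rank-one slice of the $21$-dimensional affine space $E+J_3^{\perp}$). Identifying $C_c^{\infty}(\Omega_E) \cong \mathrm{ind}_{U_0 S}^{G_2\ltimes U_3}\mathbf{1}$ for an appropriate base-point stabilizer, the further twist by $\psi_{U_3}$ converts the $U_3$-coinvariant slot into the Whittaker character, and the resulting $G_2$-module is $\mathrm{ind}_{U_0}^{G_2}\psi_0$, where $\psi_0$ is the Whittaker character of $U_0$ transported from $\psi_{U_3}$ via the semidirect-product structure.

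\emph{Injectivity and the main obstacle.} The a priori surjection $C_c^{\infty}(\Omega_E)_{U_3,\psi_{U_3}} \twoheadrightarrow \Pi_{U,\psi_U}$ needs to be upgraded to an isomorphism. For this I would invoke the uniqueness part of Theorem \ref{T:minimal}: functionals on $\Pi$ equivariant under $N$ by $\psi_A$ with $A$ rank one are forced to be multiples of the delta at $A$, which by a standard integration-distribution argument implies that $(U,\psi_U)$-equivariant functionals on $\Pi$ correspond bijectively to delta distributions along the $U_3 G_2$-orbit $\Omega_E$, matching the Hom space of $\mathrm{ind}_{U_0}^{G_2}\psi_0$ after Frobenius reciprocity. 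The principal technical obstacle is the explicit octonion-algebraic computation in stage two: establishing transitivity of $G_2 \cdot U_3$ on $\Omega_E$ and pinning down the stabilizer requires unpacking the rank-one equations using the multiplication table of Table \ref{table}, exploiting the known transitivity of $G_2$ on nonzero norm-zero vectors in $\Oct^0$ to reduce to a normalized basepoint (e.g.\ $x=s_1,\,y=s_2,\,z=-t_3$) and then checking by hand that the simultaneous annihilator equations $xz=yx=0$ and $zy=-x$ have solution set filling out a single $G_2 \cdot U_3$-orbit with unipotent stabilizer of the predicted dimension.
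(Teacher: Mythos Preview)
Your two-stage strategy---first $(N_3,\psi_E)$-coinvariants via the exact sequence of Theorem~\ref{T:minimal}, then $(U_3,\psi_U)$-coinvariants---is exactly the paper's approach, and your identification of $\Pi_{N_3,\psi_E}$ with $C_c^{\infty}(\omega)$ for the explicit rank-one slice $\omega=\Omega_E$ is correct.

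The genuine gap is in stage two: the action of $G_2\cdot U_3$ on $\omega$ is \emph{not} transitive. From the explicit formula $u^{-1}\cdot(x,y,z)=(x,\,y+bx,\,z+ay+cx)$ one sees that $U_3$ fixes the $x$-coordinate, and $G_2$ acts diagonally on the three octonion entries; hence the closed locus $\omega''=\{x=0\}$ is invariant and nonempty (take $x=y=0$ and $z$ any nonzero null vector in $\Oct^0$). Your dimension count only detects the open orbit $\omega'=\{x\neq 0\}$. The paper resolves this by establishing three things: (i) $G_2$ alone already acts transitively on $\omega'$---normalizing $x=s_1$ by transitivity on nonzero null vectors, then using the Levi and unipotent radical of $Q_1$ to bring $(y,z)$ to $(t_2,t_3)$; (ii) on $\omega'$ the triple $(x,y,z)$ is linearly independent, so $U_3$ acts freely there; (iii) on $\omega''$ the $U_3$-stabilizers are at least two-dimensional and always contain elements with $b\neq 0$, so $\psi_U$ is nontrivial on them and $C_c^{\infty}(\omega'')_{U_3,\psi_U}=0$. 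Only after step (iii) kills the boundary does one obtain $\Pi_{U,\psi_U}\cong C_c^{\infty}(\omega')_{U_3,\psi_U}$, and then the surjection $\varphi:U_0\to U_3$ (arising from the $U_0$-action on the flag $\langle s_1\rangle\subset\langle s_1,t_2\rangle\subset\langle s_1,t_2,t_3\rangle$) yields the isomorphism with $\ind_{U_0}^{G_2}\psi_0$.

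Two smaller remarks. Your proposed basepoint $(s_1,s_2,-t_3)$ does not satisfy $zy=-x$: Table~\ref{table} gives $t_3\cdot s_2=0$; the paper's basepoint is $(s_1,t_2,t_3)$. And your injectivity worry is unnecessary here: for smooth representations over a $p$-adic field, twisted Jacquet functors are exact, so once $(\Pi_N)_{N_3,\psi_E}=0$ the map $C_c^{\infty}(\Omega)_{N_3,\psi_E}\to\Pi_{N_3,\psi_E}$ is automatically an isomorphism.
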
 
\begin{proof} We shall compute $(U,\psi_U)$-coinvariants in two steps,  $(N_3, \psi_U)$-coinvariants, followed by $(U_3,\psi_U)$-coinvaraints. 
From Theorem \ref{T:minimal} it follows that 
\[ 
\Pi_{N_3,\psi_U} \cong C_c^{\infty} (\Omega)_{N_3,\psi_U} \cong C_c^{\infty} (\omega) 
\] 
where $\omega$ is the set of all rank one elements $A$ such that $\psi_A= \psi_E$ on $N_3\cong J_3$. This works out to all 
 \[ 
A=\left(\begin{array}{ccc} 
0 & x &  -y \\
-x & 0 & z \\
y & -z & 1
\end{array}\right)\in J 
\] 
where $x,y,z\in \mathbb O$ satisfying 
\[ 
\Tr(x)=\Tr(y)=\Tr(z)= 0, 
\] 
\[ 
x^2=y^2=z^2=0 , 
\] 
\[ 
xz=xy=0, yz=x. 
\] 
It is now useful to write $\omega=\omega'\cup \omega''$ where $\omega'$ is the open subset given by $x\neq 0$. We claim that $G_2$ acts transitively 
on $\omega'$. To that end, we shall prove that $x, y, z$ can be $G_2$-conjugated to the elements 
$s_1, t_2, t_3$. Conjugate  by an element in $G_2$ so that $x=s_1$. Since $\langle s_1\rangle^{\Delta} = \langle s_1,t_2,t_3\rangle$, the elements $y,z$ must be 
contained in this 3-dimensional space. 
Let $Q_1=L_1U_1$ be the maximal parabolic subgroup of $G_2$ stabilizing the line $\langle s_1\rangle$, with the Levi subgroup $L_1 \cong \GL(\langle t_2, t_3\rangle)$. 
The stabilizer of $s_1$ in $G_2$  is 
$L_1^{\rm der} U_1$ where $L_1^{\rm der}\cong {\rm SL}(\langle t_2, t_3\rangle)$. 
The octonion multiplication gives a skew-symmetric form on $\langle s_1,t_2,t_3\rangle/\langle s_1\rangle$ 
valued in $\langle s_1\rangle$. Thus we can conjugate $y,z$ by an element in $L_1^{\rm der}$ to
\[ 
y=t_2 +as_1 \text{ and } z=t_3 + bs_1 
\] 
for $a,b\in F$. We still have the unipotent radical $U_1$ to work with. The derived subgroup $[U_1,U_1]$ acts trivially on $ \langle s_1,t_2,t_3\rangle$ and 
the 2-dimensional quotient $U_1/[U_1,U_1]$ acts simply transitively on the pairs $(a,b)\in F^2$ as above. This proves the claim. Note that we have also proved that 
$x,y,z$ are linearly independent if $x\neq 0$.  The group $U_3\subset M_3$ acts on $\omega$, and hence on the triples $(x,y,z)$ of off-diagonal terms. 
Explicitly this action is
\[ 
u^{-1}  \cdot (x,y,z) = (x, y+bx, z+ ay + cx). 
\] 
Hence $U_3$ acts freely on $\omega'$, by the linear independence of $x,y,z$, but with large stabilizers on $\omega''$,  assuring that 
\[ 
\Pi_{U,\psi_U} \cong  C_c^{\infty} (\omega')_{U_3, \psi_U}. 
\] 
Summarizing,  the situation is very pleasant: $G_2$ acts transitively on $\omega'$, while $U_3$ acts freely. View $\omega'$ as the $G_2$-orbit of $A_0$ corresponding 
to the triple $(x,y,z)=(s_1,t_2, t_3)$. The triple defines a partial flag 
\[ 
\langle s_1 \rangle \subset \langle s_1, t_2\rangle \subset \langle s_1, t_2, t_3\rangle. 
\] 
Let $U_0$ be the unipotent radical of the Borel subgroup in $G_2$ stabilizing this flag.  Observe that any $u_0\in U_0$ necessarily acts 
on the triple $(s_1, t_2, t_3)$ as an element $u\in U_3$.  In fact, this action can be made explicit for elements in simple root spaces of $U_0$  since they are contained 
in the Levi factors of the two maximal parabolic subgroups ($\GL(\langle s_1, t_2\rangle)$ and $\GL(\langle t_2, t_3\rangle)$): 
\[ 
(s_1, t_2,t_3) \mapsto (s_1, t_2 +\lambda \cdot s_1,t_3) \text{ and } (s_1, t_2,t_3) \mapsto (s_1,  t_2, t_3 +\lambda \cdot t_2 ) 
\] 
where $\lambda\in F$. Hence we 
have a surjective homomorphism $\varphi : U_0 \rightarrow U_3$, such that the pullback $\bar \psi_{0}$ of $\psi_U$ is a Whittaker 
functional on $U_0$. It follows that 
\[ 
\Pi_{U.\psi_U} \cong \ind_{U_0}^{G_2} (\psi_0). 
\] 
\end{proof} 

\subsection{Global non-vanishing} 
Assume now that $F$ is a global field, and  let $\mathbb A$ be the ring of ad\`eles over $F$. Let 
$\Pi=\otimes \Pi_v$ be the restricted tensor product of minimal representations over all local places $v$ of $F$. 
Every element in $\Pi$ is a finite linear combination of pure tensors $f= \otimes f_v$, where $f_v=f_v^{\circ}$ for almost all places $v$. 
There is a unique, up to a non-zero scalar, 
embedding 
\[ 
\theta: \Pi \rightarrow \mathcal A(G(F) \backslash G(\mathbb A)) 
\] 
 of $\Pi$ into the space of automorphic functions of uniform moderate growth.  We Fourier expand $\theta(f)$ along $N$. 
 More precisely, fix a non-trivial character $\psi: \mathbb A/ F\rightarrow \mathbb C^{\times}$. 
Then, as in the local case, any $A\in J(F)$ defines a character 
$\psi_A$ of $N(F)\backslash N(\mathbb A)$ by the isomorphism $N(\mathbb A) \cong J(\mathbb A)$.  Let 
\[ 
\theta(f)_A(g)  = \int_{N(F)\backslash N(\mathbb A)} \theta(f)(ng) \bar{\psi}_A(n)  ~dn. 
\] 
We have a Fourier expansion 
\[ 
\theta(f)(g)= \theta(f)_0(g) + \sum_{A\in \Omega(F)} \theta(f)_A(g) 
\]  
supported on the set of rank one elements.  Let $A\in \Omega(F)$. Observe that $f \mapsto \theta(f)_A(1)$ is a continuous, $(N(\mathbb A), \psi_A)$-equivariant functional on $\Pi$. 
 By uniqueness of local functionals,  Theorem \ref{T:minimal},  there exists a non-zero scalar $c_A$ such that $\theta_A(f)(1) =c_A \cdot f(A)$, for all $f\in \Pi$. Hence 
\[ 
\theta(f)_A(g)= c_A \cdot  (\pi(g)  f)(A) 
\] 
for all $f\in \Pi$ and all $g\in G(\mathbb A)$, where 
$\pi$ denotes the action of $g\in G(\mathbb A)$ on $f\in \Pi$.  
This formula is particularly useful if $g\in G_2(\mathbb A)$. Then $(\pi(g) f)(A)= f(g^{-1} A)$, where 
$g^{-1} A$ is the result of the natural action of $g^{-1}$ on the off-diagonal entries of $A$.

\smallskip

For every cusp form $h\in \mathcal A(G_2(F)\backslash G_2(\mathbb A))$ consider the function 
$\Theta(f,h)$ on $\PGSp_6(\mathbb A)$ defined by 
\[ 
\Theta(f,h)(g_1)= \int_{G_2(F)\backslash G_2(\mathbb A)} \theta(f)(g_1 g) h(g) ~dg. 
\] 
The function $\theta(f)$ is of moderate growth on $G$, hence it is also on $G_2(\mathbb A)\times \PGSp_6(\mathbb A)$. 
In particular, the integral converges absolutely, since $h$ is a cusp form, 
and the output $\Theta(f,h)$ is a function of uniform moderate growth on $\PGSp_6(\mathbb A)$.   
Let $U_0$ be the maximal unipotent subgroup stabilizing the partial flag 
$\langle s_1 \rangle \subset \langle s_1, t_2\rangle \subset \langle s_1, t_2, t_3\rangle $.  Let $U'_0\subset U_0$ be the stabilizer of the triple $(s_1, t_2, t_3)$. 
Then $U_0/U'_0$ is isomorphic to $U_3$ and, via this isomorphism, the Whittaker character $\psi_U$ of $U(F)\backslash U(\mathbb A)$ transfers to a Whittaker character $\psi_0$ 
of $U_0(F)\backslash U_0(\mathbb A)$  as in the local case. 
Let 
\[ 
h_{U_0, \psi_0}(g) = \int_{ U_0(F)\backslash U_0(\mathbb A)} h(u g) \bar{\psi}_0(u) ~du. 
\] 

\begin{theorem}\label{thm:main-appendix} Let  $\psi_U$ be the Whittaker character of $U(F)\backslash U(\mathbb A)$. If $h_{U_0, \psi_0}\neq 0$ then, for some choice of $f\in \Pi$, 
\[ 
\Theta(f,h)_{U,\psi_U}(1):=\int_{U(F)\backslash U(\mathbb A)} \Theta(f,h)(u)   \bar{\psi}_U(u) ~du \neq 0 .
\] 
\end{theorem}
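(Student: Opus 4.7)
The plan is to unfold $\Theta(f,h)_{U,\psi_U}(1)$ using the $N$-Fourier expansion of $\theta(f)$ and reduce the non-vanishing to the hypothesis $h_{U_0,\psi_0}\neq 0$, imitating globally the local computation from the proof of Theorem \ref{T:GG}. Since $U\subset\PGSp_6(\mathbb A)$ commutes pointwise with $G_2(\mathbb A)$ inside $G(\mathbb A)$, Fubini (justified by the moderate growth of $\theta(f)$ and the cuspidality of $h$) rewrites
$$\Theta(f,h)_{U,\psi_U}(1)=\int_{[G_2]}h(g)\,\theta(f)_{U,\psi_U}(g)\,dg,\qquad \theta(f)_{U,\psi_U}(g):=\int_{[U]}\theta(f)(gu)\bar\psi_U(u)\,du.$$
Insert the $N$-Fourier expansion $\theta(f)(x)=\theta(f)_0(x)+\sum_{A\in\Omega(F)}c_A\,(\pi(x)f)(A)$ from Theorem \ref{T:minimal}, factor $U=U_3 N_3$, and use $\psi_U|_{N_3}=\psi_E$: the $[N_3]$-integration of the constant term $\theta(f)_0$ vanishes because $\theta(f)_0$ is $N_3$-invariant and $\psi_E$ is nontrivial on $N_3$, while the surviving Fourier modes are precisely those $A$ with $\psi_A|_{N_3}=\psi_E$, which (exactly as in the proof of Theorem \ref{T:GG}) cuts out $\omega(F)=\omega'(F)\sqcup\omega''(F)$.

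Next one passes from $\omega$ to $\omega'$ and unfolds. For $A\in\omega''(F)$ the $U_3$-stabilizer of $A$ is positive-dimensional and $\psi_U|_{U_3}$ is nontrivial on it (the local input used in the proof of Theorem \ref{T:GG}), so the $[U_3]$-integration kills these terms place by place and only $\omega'(F)$ contributes. Since $U'_0$ is unipotent we have $H^1(F,U'_0)=0$, so $G_2(F)$ acts transitively on $\omega'(F)$ with stabilizer $U'_0(F)$ at the base point $A_0\leftrightarrow(s_1,t_2,t_3)$. Eisenstein-style unfolding $\sum_{\gamma\in U'_0(F)\backslash G_2(F)}\int_{[G_2]}=\int_{U'_0(F)\backslash G_2(\mathbb A)}$, combined with the commutation of $U_3\subset M_3$ with $G_2$, converts the expression into
$$\Theta(f,h)_{U,\psi_U}(1)=c_{A_0}\int_{U'_0(F)\backslash G_2(\mathbb A)}h(g)\int_{[U_3]}f(u_3^{-1}g^{-1}A_0)\bar\psi_U(u_3)\,du_3\,dg.$$

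To identify this with $h_{U_0,\psi_0}$ and conclude, invoke the surjection $\varphi\colon U_0\twoheadrightarrow U_3$ from the proof of Theorem \ref{T:GG}, which satisfies $\varphi^*(\psi_U|_{U_3})=\psi_0$, has kernel $U'_0$, and satisfies $\varphi(u_0)\cdot A_0=u_0\cdot A_0$. Since $\psi_0$ is trivial on $U'_0$, the integrand on $[U_3]$ descends from $[U_0]$, and one replaces $\int_{[U_3]}$ by $\operatorname{vol}([U'_0])^{-1}\int_{[U_0]}$. Swapping integrals and substituting $g\mapsto u_0^{-1}g$ on $U'_0(F)\backslash G_2(\mathbb A)$ assembles the $U_0$-integration with $h(g)$ into a Whittaker-Fourier coefficient of $h$, giving
$$\Theta(f,h)_{U,\psi_U}(1)=C\int_{U'_0(F)\backslash G_2(\mathbb A)}f(g^{-1}A_0)\,h_{U_0,\bar\psi_0}(g)\,dg$$
for a nonzero constant $C$. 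Since $h_{U_0,\psi_0}\neq 0$ forces $h_{U_0,\bar\psi_0}\neq 0$, pick $g_0\in G_2(\mathbb A)$ with $h_{U_0,\bar\psi_0}(g_0)\neq 0$; then $g_0^{-1}A_0\in\omega'(\mathbb A)\subset\Omega(\mathbb A)$, and because $C_c^\infty(\Omega)\subset\Pi$ by Theorem \ref{T:minimal}, one may take $f=\otimes_v f_v$ supported in a sufficiently small neighborhood of $g_0^{-1}A_0$. The integral then reduces to a positive volume multiple of $h_{U_0,\bar\psi_0}(g_0)\neq 0$.

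The main obstacle will be the measure bookkeeping in the final step: one must carefully track the adelic Haar measures along the short exact sequence $1\to U'_0\to U_0\to U_3\to 1$ and verify that the change of variables $g\mapsto u_0^{-1}g$ on $U'_0(F)\backslash G_2(\mathbb A)$ really assembles the double integral into $h_{U_0,\bar\psi_0}$ rather than some coarser Fourier coefficient along $U'_0$. A secondary technical point is the justification of interchanging $\sum_{A\in\Omega(F)}$ with the $[U]$-integration; the cleanest route is to carry out the $[N_3]$-integration first, so that the surviving Fourier modes are parameterized by $\omega(F)$ in an absolutely convergent expression before the outer $[U_3]$-integration is performed.
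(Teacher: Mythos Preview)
Your unfolding is essentially the same as the paper's, and the identity you arrive at is correct (up to the harmless $\psi_0$ versus $\bar\psi_0$ discrepancy). The measure bookkeeping you worry about does work out: once you first pass from $U'_0(F)\backslash G_2(\mathbb A)$ to $U'_0(\mathbb A)\backslash G_2(\mathbb A)$ (folding the $[U'_0]$-integral into the constant term $h_{U'_0}$), the substitution $g\mapsto u_0^{-1}g$ is well-defined because $U'_0\triangleleft U_0$, and everything collapses to the formula you state. Indeed, unfolding the paper's Whittaker function $\tilde f$ recovers exactly your integral $\int_{U'_0(\mathbb A)\backslash G_2(\mathbb A)} f(g^{-1}A_0)\,h_{U_0,\psi_0}(g)\,dg$.

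The genuine gap is your final non-vanishing step. You invoke $C_c^\infty(\Omega)\subset\Pi$ from Theorem~\ref{T:minimal} to take $f=\otimes_v f_v$ supported in a small neighborhood of $g_0^{-1}A_0$, but that theorem is a \emph{local} statement. Globally $\Pi=\otimes'\Pi_v$ is a restricted tensor product, so any $f\in\Pi$ satisfies $f_v=f_v^{\circ}$ for almost all $v$, and by Theorem~\ref{T:spherical} the spherical vector $f_v^{\circ}$ is supported on all of $J(O_v)\cap\Omega$, not on a small set. Thus $f$ cannot be globally supported near a single point, and the integral does not localize to $g_0$; the ``positive volume multiple of $h_{U_0,\bar\psi_0}(g_0)$'' conclusion is unjustified. (The same issue means the convergence of your non-compact integral over $U'_0(\mathbb A)\backslash G_2(\mathbb A)$ is not yet established either.)

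The paper supplies the missing ingredient: a lemma showing that for $v\notin S$ the spherical Whittaker function $\tilde f_v^{\circ}(g)=\int_{U'_0(F_v)\backslash U_0(F_v)} f_v^{\circ}((ug)^{-1}A_0)\psi_0(u)\,du$ is supported on $U_0(F_v)G_2(O_v)$ with $\tilde f_v^{\circ}(1)=1$. This collapses the adelic integral $\int_{U_0(\mathbb A)\backslash G_2(\mathbb A)}\tilde f(g)\,h_{U_0,\psi_0}(g)\,dg$ to $\int_{U_0(\mathbb A_S)\backslash G_2(\mathbb A_S)}\tilde f_S(g)\,h_{U_0,\psi_0}(g)\,dg$ over the finite set $S$, and \emph{then} the local inclusion $C_c^\infty(\Omega(F_v))\subset\Pi_v$ for $v\in S$ gives the freedom to force non-vanishing. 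In short, your argument is on the right track but needs the unramified support computation; without it the last step does not go through.
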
 
\begin{proof} The first part of the proof involves using the Fourier expansion of $\theta(f)$ and unfolding the integral. 
This follows closely the proof of the local Theorem \ref{T:GG}, and we shall be brief. 
Firstly we integrate over $N_3(F)\backslash N_3(\mathbb A)$. This reduces the Fourier sum to over the subset $\omega(F)$ 
\[ 
\Theta(f,h)_{N_3,\psi_U}(1) = \int_{G_2(F)\backslash G_2(\mathbb A)} \sum_{A\in \omega(F)} \theta(f)_A(g) h(g) ~dg. 
\] 
Write $\omega=\omega'\cup\omega''$ as in the local case. We can ignore the sum over $\omega''$ since it will vanish after integrating over 
$U_3(F)\backslash U_3(\mathbb A)$. On the other hand, $\omega'(F)$ is the $G_2(F)$-orbit of $A_0$ corresponding to the triple  $(x,y,z)=(s_1,t_2,t_3)$ 
with the stabilizer $U'_0(F)$.   Then 
\[ 
\int_{G_2(F)\backslash G_2(\mathbb A)} \sum_{A\in \omega'(F)} \theta(f)_A(g) h(g) ~dg =
c_{A_0} \int_{U'_0(F)\backslash G_2(\mathbb A)} f(g^{-1} A_0)  h(g) ~dg. 
\] 
We integrate over $U'_0(F)\backslash U'_0(\mathbb A)$,  and use that the function $g\mapsto f(g^{-1} A_0)$ is left $U'_0(\mathbb A)$-invariant, hence
\[ 
c_{A_0} \int_{U'_0(F)\backslash G_2(\mathbb A)} f(g^{-1} A_0)  h(g) ~dg = 
c_{A_0} \int_{U'_0(\mathbb A)\backslash G_2(\mathbb A)} f(g^{-1} A_0)  h_{U'_0}(g) ~dg 
\] 
where $h_{U'_0}$ is the constant term of $h$ along $U'_0$. Finally, we integrate over 
 over $U_3(F)\backslash U_3(\mathbb A)$ against the character $\bar \psi_U$. Using the isomorphism $U_3\cong U_0/U'_0$, we obtain 
\[ 
\Theta(f,h)_{U,\psi_U}(1)= c_{A_0} \int_{U'_0(\mathbb A)\backslash G_2(\mathbb A)} \tilde f (g) h_{U'_0}(g)   ~dg 
\] 
where $\tilde f$ is the product of 
\[ 
\tilde f_v(g) =\int_{ U_0'(F_v)\backslash U_0(F_v)} f_v((ug)^{-1}  A_0) \psi_0(u)  ~du. 
\] 
Observe that $\tilde f$ is a Whittaker function i.e. $\tilde f(ug)= \bar\psi_0(u) \tilde f(g)$, for all $u\in U_0(\mathbb A)$. Hence 
\[ 
\Theta(f,h)_{U,\psi_U}(1)= c_{A_0} \int_{U_0(\mathbb A)\backslash G_2(\mathbb A)} \tilde f (g) h_{U_0,\psi_0}(g)   ~dg. 
\]

\smallskip 
The next step is to show that this integral reduces to a finite number of places $S$, as in Section 7 of \cite{GS03}. Assume that 
$S$ contains all archimedean places and, if $v\notin S$, then  
\begin{itemize} 
\item $f_v$ is the spherical vector $f_v^{\circ}$,
\item the cusp form $h$ is $G_2(O_v)$-invariant, 
\item the conductor of $\psi$ restricted to $F_v$ is $O_v$. 
\end{itemize} 

 Let  $B_0$ be the Borel subgroup containg $U_0$. We fix a Levi 
factor $L_0$ (a torus) so that is stabilizes the lines through $s_1,t_2, t_3$. Thus, if  $l_v\in L_0(F_v) $ then 
\[ 
l_v t_2= \lambda_2\cdot  t_2, ~ l_v  t_3= \lambda_3 \cdot t_3 ,  ~ l_v  s_1= (\lambda_2\lambda_3) \cdot  s_1 , 
\] 
for some non-zero scalars $\lambda_2$ and $\lambda_3$, where the last identity follows from $s_1=t_2 t_3$.   

\begin{lemma} If $v\notin S$ then the Whittaker function  $\tilde f_v^{\circ}$ is supported on $U_0(F_v) G_2(O_v)$ and $\tilde f_v^{\circ}(1)=1$. 
\end{lemma} 
\begin{proof} Let $\ord_v$ denote the valuation on $F_v^{\times}$. Since $\tilde f_v^{\circ}$ is a spherical Whittaker function, 
 it is determined by its restriction to $L_0(F_v)$ and there it 
 is supported on the cone consisting of $l_v$  such that 
 $\ord_v (\alpha_1(l_v)) \geq 0$ and $\ord_v(\alpha_2(l_v))\geq 0$ where $\alpha_1$ and $\alpha_2$ are the simple roots. In terms of the explicit 
action of $l_v$ on $s_1, t_2, t_3$ described above,  these inequalities are 
\[ 
\ord_v( \lambda_2 ) \geq \ord_v( \lambda_3) \geq 0. 
\] 
On the other hand, since $f_v^{\circ}$ is supported on the lattice $J(O_v)$,   it is easily seen that, for any $u_v\in U_0(F_v)$, 
the function $l_v\mapsto f_v^{\circ}( (u_v l_v)^{-1} A_0)$ is supported on the cone 
\[
\ord_v( \lambda_2 ),  \ord_v( \lambda_3) \leq  0.
\] 
This completes the support part of the statement. Using again that $f_v^{\circ}$ is supported on the lattice $J(O_v)$, the  function $u_v\mapsto f_v^{\circ}( u_v^{-1} A_0)$, is supported 
on $U'_0(F_v) U_0(O_v)$ and this implies that $\tilde f_v^{\circ}(1)=1$, since the character $\psi_0$ is trivial on $U_0(O_v)$. 

\end{proof} 

Let $\mathbb A_S$ be the product of $F_v$ over all $v\in S$. The previous lemma implies that 
\[ 
\Theta(f,h)_{U,\psi_U}(1)= c_{A_0} \int_{U_0(\mathbb A_S)\backslash G_2(\mathbb A_S)} \tilde f_S(g) h_{U_0,\psi_0} (g)  ~dg.   
\] 
Since, by Theorem \ref{T:minimal},  $f_v$ can be any smooth, compactly supported function on $\Omega(F_v)$, for every $v\in S$, the integral can be arranged to be non-zero. 
This completes the proof of the theorem.

\end{proof} 

\noindent 
{\bf Remark:} Note that  $\Theta(f,h)$ is unramified for all $v\notin S$.


\begin{thebibliography}{99}

\bibitem[Ad]{Ad}  J. Adams, Discrete series and characters of the component group, in {\it On the stabilization of the trace formula}, 
{\it Stab. Trace Formula Shimura Var. Arith. Appl.},  Somerville, MA:  International Press {\bf 1} (2011) 369--387, 

\bibitem[A]{A}  J. Arthur, {\it The endoscopic classification of representations--orthogonal and symplectic groups}, {\it American Mathematical Society Colloquium Publications}, {\bf 61} (2013).

\bibitem[BHKT]{BHKT} G. B\"ockle, M. Harris, C. Khare and J. Thorne,  {\it $\widehat{G}$-local systems on smooth projective curves are potentially automorphic}, to appear in {\it Acta Mathematica}.


\bibitem[BGGT]{BGGT}   T. Barnet-Lamb, T. Gee, D. Geraghty, R. Taylor,  Potential automorphy and change of weight, {\it Ann. of Math.}, {\bf 179} (2014) 501--609.

\bibitem[BCE++]{Box19}   G. Boxer, F. Calegari, M. Emerton, B. Levin, K. Madapusi Pera, and S. Patrikis,  Compatible systems of {G}alois representations associated to
the exceptional group {$E_6$}, {\it Forum Math. Sigma}, {\bf 7} (2019)  e4, 29.


\bibitem[BG]{BG} D. Bump and D. Ginzburg, Symmetric square L-functions on $GL(r)$, {\it 
Ann. of Math. (2),} {\bf 36} (1992), no. 1, 137--205. 

%\bibitem[C12]{C12}  F. Calegari, Even Galois representations and the Fontaine-Mazur conjecture. II. {\it J. Amer. Math. Soc.}, {\bf 25} (2012) 533--554.

\bibitem[Ca]{Ca}  A. Caraiani, Local-global compatibility and the action of monodromy on nearby cycles. {\it  Duke Math. J.}, {\bf 161} (2012) 2311--2413. 

\bibitem[Ch]{Ch} G. Chenevier, Subgroups of Spin(7) or SO(7) with each element conjugate to some element of G2 and applications to automorphic forms, {\it Doc. Math. } {\bf 24} (2019), 95--161.

 \bibitem[CH]{CH}  G. Chenevier, M. Harris, Construction of automorphic Galois representations, II, {\it Cambridge Math. J.}, {\bf 1} (2013), 53--73.
 
 \bibitem[Cl91]{Cl91} L. Clozel, Repr\'esentations Galoisiennes associ\'ees aux repr\'esentations
automorphes autoduales de GL(n), {\it Publ. Math. I.H.E.S.}, {\bf 73} (1991) 97--145 .


\bibitem[Cl13]{Cl13}  L. Clozel, Purity reigns supreme,    {\it Int. Math. Res. Not.} {\bf IMRN 2013} (2013) 323--346. 

%\bibitem[CHL11]{CHL}  L. Clozel, M. Harris, J.-P. Labesse, Construction of automorphic Galois representations, I., in L. Clozel, M. Harris, J.-P. Labesse, B. C. Ng\^o, eds. {\it The stable trace formula, Shimura varieties, and arithmetic applications. Volume I: Stabilization of the trace formula}, Boston: International Press (2011) 497--527.

\bibitem[CHT]{CHT}  L. Clozel, M. Harris, R. Taylor, Automorphy for some $\ell$-adic lifts of automorphic mod $\ell$ Galois representations, {\it Publ. Math. Inst. Hautes Études Sci.}, {\bf 108 } (2008) 1--181.

\bibitem[CKPS]{CKPS}  J. Cogdell, H. Kim, I. I. Piatetski-Shapiro, F. Shahidi, Functoriality for the classical groups. {\it Publ. Math. Inst. Hautes \'Etudes Sci.},  {\bf 99} (2004) 163--233.

%\bibitem[D]{D} L. Dieulefait, 
%Automorphy of $\mathrm{Symm}^5(GL(2))$ and base change, {\it 
%J. Math. Pures Appl. (9)} {\bf 104 } (2015), no. 4, 619--656. 

\bibitem[FKP]{FKP}  N. Fakhruddin, C. Khare, S. Patrikis, Relative deformation theory and lifting irreducible Galois representations, arXiv:1904.02374 [math.NT].


\bibitem[G]{G} D. Geraghty, Modularity lifting theorems for ordinary Galois representations, {\it Math. Ann.}, {\bf 373} (2019), no. 3--4, 1341--1427. 






\bibitem[G99]{G99}  W. T. Gan, 
{\em Exceptional Howe correspondences over finite fields.}  Compositio Math. 118 (1999), no. 3, 323--344. 
  


\bibitem[GS03]{GS03} W. T. Gan and G. Savin, {\em Real and global lifts from $PGL_3$ to $G_2$.}  IMRN 2003, Vol. 50 (2003). 



\bibitem[GS04]{GS04} W.-T. Gan and G. Savin, Endoscopic lifts from $PGL_3$ to $G_2$, {\it Compositio Math}, {\bf 140} (2004), 793--808. 

\bibitem[GS21]{GS21}  W.-T. Gan and G. Savin, Howe duality and dichotomy for exceptional theta correspondences, 
(2021) arxiv:  2102.00372


\bibitem[GLa]{GLa} A. Genestier and V. Lafforgue, Chtoucas restreints pour les groupes r\'eductifs et param\'etrisation de Langlands locale.
(2017) arXiv:1709.00978. 



\bibitem[GJ]{GJ}  D. Ginzburg, D. Jiang,  Periods and liftings: from $G2$ to $C3$, {\it Israel J. Math.}, {\bf 123} (2001) 29--59.

\bibitem[GRS97]{GRS97}   D. Ginzburg, S. Rallis, D. Soudry, A tower of theta correspondences for $G_2$. {\it Duke Math. J.}, {\bf 88} (1997) 537--624. 



\bibitem[GS98]{GS98}  B. Gross, G. Savin, Motives with Galois group of type $G_2$: an exceptional theta-correspondence, {\it Compositio Math.}, {\bf 114} (1998) 153--217.


\bibitem[GW]{GW}  B. Gross, N. Wallach, A distinguished family of unitary representations for the exceptional groups of real rank 4, in {\it Lie theory and geometry, in honor of B. Kostant}, {\it Progr. Math.}, {\bf 123}, Boston:  Birkh\"auser (1994) 289--304.

\bibitem[Gri95]{Gri95}   R. L. Griess, Jr., Basic conjugacy theorems for $G_2$, {\it Invent. Math.}, {\bf 121} (1995) no. 2, 257--277. 

\bibitem[HPS]{HPS}  J.-S. Huang, P. Pand\v{z}\'ic and G. Savin, New dual pair correspondences, {\it Duke Math. J.} , {\bf 82} (1996), 447--471.

\bibitem[H18]{H18} M. Harris, Incorrigible representations, \url{https://arxiv.org/abs/1811.05050}.

\bibitem[HST]{HST}   M. Harris, N. Shepherd-Barron, R. Taylor,  A family of Calabi-Yau varieties and potential automorphy, {\it Ann. of Math.}, {\bf 171} (2010) 779--813.

\bibitem [HS]{HS}  H. Hecht, W. Schmid,  On integrable representations of a semisimple Lie group,
{\it Math. Ann.}, {\bf 220} (1976)  147-149.


\bibitem[HL]{HL}  J. Hundley, B. Liu, On automorphic descent from $GL_7$ to $G_2$, manuscript (2019).


\bibitem[KLS08]{KLS08}  C. Khare, M. Larsen, G. Savin, Functoriality and the inverse Galois problem,  {\it Compos. Math.}, {\bf 144} (2008) 541--564. 

\bibitem[KLS10]{KLS10}  C. Khare, M. Larsen, G. Savin, Functoriality and the inverse Galois problem,  II. Groups of type $B_n$ and $G_2$. {\it Ann. Fac. Sci. Toulouse Math.}, {\bf 19} (2010) 37--70.


\bibitem[KS15]{KS15} T. Kobayashi and G. Savin, {\em Global uniqueness of small representations,} 
 Math. Z. 281 (2015), no. 1-2, 215--239.

\bibitem[KW]{KW} C. Khare, J-P. Wintenberger,  On Serre's  conjecture for 2-dimensional mod $p$ representations 
of ${\rm Gal}({\overline{\bf Q}}/{\bf Q})$,   {\it Annals of Math.} 169 (2009), 229--253.

\bibitem[Kos]{kos}  B. Kostant, On Whittaker vectors and representation theory, {\it Invent. Math.}, {\bf 48}  (1978) 101--184.


\bibitem[Li97]{Li97}  J.-S. Li, On the discrete spectrum of  $(G_2, PGSp_6)$,  {\it Invent. Math.}, {\bf 130} (1997) 189--207.

\bibitem[Li99]{Li99}  J.-S. Li, The correspondences of infinitesimal characters for reductive dual pairs in simple Lie groups, {\it Duke Math. J.}, {\bf 97} (1999) 347--377.

\bibitem[MS]{MS}  K. Magaard, G. Savin, Exceptional $\Theta$-correspondences I, {\it Compositio Math.}, {\bf 107} (1997) 89--123.

\bibitem[M90]{M90}  L. Moret-Bailly, Extensions de corps globaux \`a ramification et groupe de
Galois donn\'es, {\it C. R. Acad. Sci. Paris S\'{e}r. I Math.}, {\bf 311} (1990), no. 6, 273--276.

\bibitem[Pat16]{Pat16} S. Patrikis, Deformations of Galois representations and exceptional monodromy, \textit{Invent. Math.} {\bf 205} (2016), no. 2, 269--336.

\bibitem[SR]{SR}  S. Salamanca-Riba, On the unitary dual of real reductive Lie groups and the $A_{\mathfrak{q}}(\lambda)$-modules: the strongly regular case.
{\it Duke Math. J. }, {\bf 96} (1999) 521--546.

\bibitem[SWe]{SWe}  G. Savin, M. H. Weissman,  Dichotomy for generic supercuspidal representations of $G_2$. {\it Compos. Math.}, {\bf 147} (2011) 735--783. 

%\bibitem[SWo]{SWo}   G. Savin, M. Woodbury,  Matching of Hecke operators for exceptional dual pair correspondences.  {\it J. Number Theory}, {\bf 146} (2015), 534--556. 

\bibitem[SW07] {SW07} G. Savin and M. Woodbury, 
{\em Structure of internal modules and a formula for the spherical vector of minimal representations,}  J. Algebra 312 (2007), no. 2, 755--772. 

\bibitem[SW15]{SW15}  G. Savin and M. Woodbury,
{\em Matching of Hecke operators for exceptional dual pair correspondences,} J. Number Theory 146 (2015), 534-556.

\bibitem[Sh]{Sh}  F. Shahidi, A proof of Langlands' conjecture on Plancherel measures;
Complementary series for $p$-adic groups {\it Ann. of Math.}, {\bf 132} (1990) no. 2, 273--330.

\bibitem[S]{S}  S. W. Shin,  Galois representations arising from some compact Shimura varieties, {\it Ann. of Math.}, {\bf 173} (2011) no. 3, 1645--1741.
%\bibitem[Sh]{Sh} S. W. Shin, 

\bibitem[St]{St}  R. Steinberg, {\it Lectures on Chevalley groups}, {\it University Lecture Series}, {\bf 66}, American Mathematical Society, Providence, RI, 2016. xi+160 pp.


\bibitem[T]{T}  J. A. Thorne,  
On the automorphy of $l$-adic Galois representations with small residual image, {\it J. Inst. Math. Jussieu}, {\bf 11} (2012), no. 4, 855--920.


\bibitem[TY]{TY} R. Taylor and T. Yoshida, Compatibility of local and global Langlands correspondences, {\it J. Amer. Math. Soc.}, {\bf  20} (2007) no. 2, 467--493. 


\bibitem[Vas]{Vas} Adrian Vasiu,  Surjectivity criteria for $p$-adic representations. I. {\it Manuscripta Math}. {\bf 112} (2003), no. 3, 325--355.

\bibitem[V84]{V84} M.-F. Vign\'eras, Correspondances entre representations automorphes de $GL(2)$ sur une extension quadratique de $GSp(4)$ sur $\QQ$, conjecture locale de Langlands pour $GSp(4)$, in {\it The Selberg trace formula and related topics (Brunswick, Maine, 1984)},  {\it Contemp. Math.}, {\bf 53} (1986) 463--527.

\bibitem[V]{V}  D. Vogan, Gelfand-Kiriliov dimension for Harish-Chandra modules,  {\it Invent. Math.}, {\bf 48}  (1978) 75--98.

\bibitem[VZ]{VZ}  D. Vogan and G. Zuckerman, Unitary representations with non-zero cohomology, {\it Compositio Math.}, {\bf 53} (1984) 51--90.

\bibitem[X]{Xu}  B. Xu, On a lifting problem of L-packets. {\it Compos. Math.}, {\bf 152} (2016) 1800--1850.
%L-packets of quasisplit $GSp(2n)$ and $GO(2n)$,  {\it Math. Ann.}, {\bf 370}  (2018), no. 1-2, 71--189.


\bibitem[Z]{Zh} Qing Zhang, A local converse theorem for $Sp_{2r}$. {\it Math. Ann.}, {\bf 372} (2018), no. 1--2, 451--488. 

\end{thebibliography}
\end{document}